\documentclass[a4paper]{article}
\usepackage{amsmath,amsthm,amssymb,amscd,graphicx, enumerate}

\numberwithin{equation}{section}

\newtheorem{thm}{Theorem}[section]
\newtheorem{lem}[thm]{Lemma}
\newtheorem{prop}[thm]{Proposition}

\newtheorem{remark}[thm]{Remark}

\theoremstyle{definition}
\newtheorem{definition}[thm]{Definition}
\newtheorem{example}[thm]{Example}


\newenvironment{rem}{\begin{remark}\rm}{\end{remark}}


\def\address#1#2{\begingroup
\noindent\parbox[t]{12cm}{%
\small{\scshape\ignorespaces#1}\par\vskip1ex
\noindent\small{\itshape E-mail address}%
\/: #2\par\vskip4ex}\hfill%
\endgroup}%
\makeatother

\title{Some associative submanifolds of the squashed 7-sphere}
\author{Kotaro Kawai
\footnote{The author is supported by Grant-in-Aid for JSPS fellows (26-7067).}}
\date{}

\begin{document}

\maketitle

\begin{abstract}
The squashed 7-sphere $S^{7}$ is a 7-sphere 
with an Einstein metric given by the canonical variation 
and its cone $\mathbb{R}^{8} - \{ 0 \}$ has full holonomy ${\rm Spin}(7)$. 
There is a canonical calibrating 4-form $\Phi$ on $\mathbb{R}^{8} - \{ 0 \}$. 
A minimal 3-submanifold in $S^{7}$
is called associative if its cone is calibrated by $\Phi$.

In this paper, we classify two types of 
fundamental associative submanifolds in the squashed $S^{7}$.  
One is obtained by the intersection with a 4-plane 
and 
the other is homogeneous. 
Then 
we study their infinitesimal associative deformations and 
explicitly show that all of them are integrable. 
\end{abstract}

\section{Introduction}

A Riemannian 7-manifold $(Y, g)$ is 
called a nearly parallel $G_{2}$-manifold  
if its cone $(C(Y), \overline{g}) = (\mathbb{R}_{>0} \times Y, dr^{2} + r^{2} g)$ 
has holonomy contained in ${\rm Spin(7)}$. 
The existence of such a structure 
is equivalent to that of a spin structure with 
a real Killing spinor (\cite{Bar}), 
which is also used 
in supergravity and superstring theory in physics. 
There is a canonical calibrating 4-form $\Phi$ on $C(Y)$. 
A 3-submanifold $M$ in $Y$ is called associative if 
its cone $C(M)$ is Cayley, 
i.e. it is calibrated by $\Phi$. 

By definition, Sasaki-Einstein manifolds, especially 3-Sasakian manifolds, 
admit nearly parallel $G_{2}$-structures.
Moreover, every compact 3-Sasakian 7-manifold admits 
a second nearly parallel $G_{2}$-structure whose cone metric 
has full holonomy ${\rm Spin}(7)$(\cite{FKMS}). 
The 7-sphere $S^{7}$ with this second nearly parallel $G_{2}$-structure is called 
the squashed $S^{7}$.

Associative submanifolds in the standard $S^{7}$ were studied by Lotay \cite{Lotay3}. 
In this paper, we study some fundamental associative submanifolds in the squashed $S^{7}$ 
and compare the properties. 

First, we find some fundamental examples of  associative submanifolds in the squashed $S^{7}$. 
Fibers of the Hopf fibration $\pi: S^{7} \rightarrow S^{4}$ are associative. 
More generally, the Hopf lifts of $I_{1}^{'}$-holomorphic curves in $\mathbb{C}P^{3}$ 
are also associative in the squashed $S^{7}$(Proposition \ref{example asso psehol}),
where $I_{1}^{'}$ is an almost complex structure on $\mathbb{C}P^{3}$ given by (\ref{almcpxstr CP3}).

Next, we classify associative submanifolds obtained by the intersection with a 4-plane. 
Note that 
the automorphism group of the squashed $S^{7}$ is 
${\rm Sp}(1) {\rm Sp}(2) = {\rm Sp}(1) \times {\rm Sp}(2)/ \{ \pm (1, 1) \}$
(Lemma \ref{auto sq S7}).

\begin{thm} \label{plane asso}
Let  $V \subset \mathbb{R}^{8} = \mathbb{C}^{4}$ be a 4-plane. 
Suppose that $V \cap S^{7}$ is associative in the squashed $S^{7}$. 
Then up to the ${\rm Sp}(1) {\rm Sp}(2)$-action, $V$ is either 
\begin{align*}
V_{1} &= \{ (z_{1}, z_{2}, 0, 0) \in \mathbb{C}^{4}; z_{1}, z_{2} \in \mathbb{C} \} 
\quad \mbox{ or } \quad 
V_{2} = \{ (z_{1}, 0, z_{3}, 0) \in \mathbb{C}^{4}; z_{1}, z_{3} \in \mathbb{C} \}.  
\end{align*}
In other words, 
the space $\mathcal{M}$ 
of 4-planes whose intersections with $S^{7}$ are associative 
is described as 
\begin{align*}
\mathcal{M} = {\rm Sp}(1) {\rm Sp}(2)/K_{1} \sqcup {\rm Sp}(1) {\rm Sp}(2)/K_{2}, 
\end{align*}
where
$
K_{1} = {\rm Sp}(1) ({\rm Sp}(1) \times {\rm Sp}(1)), 
$
and  
$
K_{2} = {\rm U}(1) {\rm U}(2). 
$
\end{thm}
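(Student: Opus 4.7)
The plan is to translate the associative condition on $V \cap S^{7}$ into a Cayley condition on $V$ itself. Since the cone over $V \cap S^{7}$ is just $V \setminus \{0\}$, asking that $V \cap S^{7}$ be associative is equivalent to asking that the 4-plane $V$ be calibrated by $\Phi$, i.e.\ Cayley with respect to the ${\rm Spin}(7)$-structure on $\mathbb{R}^{8}$ underlying the squashed $S^{7}$. The theorem therefore reduces to classifying the orbits of ${\rm Sp}(1){\rm Sp}(2) \subset {\rm Spin}(7)$ on the Cayley Grassmannian of $\mathbb{R}^{8}$.

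The first step is to use the transitive action of ${\rm Sp}(2)$ on $S^{7} \subset \mathbb{H}^{2}$: pick any unit vector $v \in V$ and move it to $e_{1} = (1,0) \in \mathbb{H} \oplus \mathbb{H}$. Then $V = \mathbb{R} e_{1} \oplus W$ for some 3-plane $W \subset T_{e_{1}} S^{7}$, and the tangent space decomposes as $T_{e_{1}} S^{7} = V_{-} \oplus V_{+}$, where $V_{-} = \mathrm{span}(e_{1} i, e_{1} j, e_{1} k)$ is the vertical Hopf direction and $V_{+} = 0 \oplus \mathbb{H}$ is horizontal. Writing $\Phi$ at $e_{1}$ with respect to this splitting — the rescaling of $V_{-}$ being precisely the feature that distinguishes the squashed from the round structure — the Cayley condition on $V$ becomes the associative condition $\varphi_{\mathrm{sq}}|_{W} = \mathrm{vol}_{W}$ for the induced squashed $G_{2}$ 3-form at $e_{1}$.

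The core step is a case analysis on how $W$ meets $V_{-}$. Parametrize $W$ by $k := \dim(W \cap V_{-}) \in \{0,1,2,3\}$ and present $W$ as the graph of a linear map between the appropriate complements. The residual isotropy of $\pm e_{1}$ in ${\rm Sp}(1){\rm Sp}(2)$ is essentially a copy of ${\rm Sp}(1) \times {\rm Sp}(1)$ acting on $V_{-}$ and $V_{+}$ compatibly, and I would use it at each stage to normalize the graph. The expected outcome is that only two values of $k$ give solutions: $k = 3$, forcing $W = V_{-}$ and hence $V = \mathbb{H} \oplus 0 = V_{1}$; and $k = 1$, in which case the horizontal part of $W$, together with $e_{1}$, realises a complex (but not quaternionic) 2-plane, yielding $V_{2}$ after a further normalization by the residual symmetry. \textbf{The main obstacle is exactly this middle step}: one must verify that the squashed associative equation admits no solutions with $k = 0$ or $k = 2$, which should follow from the fact that the scaling of $\varphi_{\mathrm{sq}}$ on the vertical part $V_{-}$ is not compatible with that on the horizontal part $V_{+}$ except in the two extreme configurations above.

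Finally I would identify the stabilizers. For $V_{1} = \mathbb{H} \oplus 0$, right $\mathbb{H}$-multiplication preserves every quaternionic line, so the outer ${\rm Sp}(1)$ fixes $V_{1}$ entirely, while the ${\rm Sp}(2)$-stabilizer is the block-diagonal ${\rm Sp}(1) \times {\rm Sp}(1)$; together these yield $K_{1} = {\rm Sp}(1)({\rm Sp}(1) \times {\rm Sp}(1))$. For $V_{2}$, only a ${\rm U}(1)$ inside the outer ${\rm Sp}(1)$ preserves the relevant complex structure used to define $V_{2}$, and the ${\rm Sp}(2)$-stabilizer of the complex 2-plane reduces to the ${\rm U}(2)$ preserving the distinguished $\mathbb{C}$-splittings of the two $\mathbb{H}$-factors, yielding $K_{2} = {\rm U}(1){\rm U}(2)$. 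Combining these with the orbit-stabilizer description gives $\mathcal{M} = {\rm Sp}(1){\rm Sp}(2)/K_{1} \sqcup {\rm Sp}(1){\rm Sp}(2)/K_{2}$ as stated.
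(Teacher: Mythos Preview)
Your reduction to a single-point Cayley condition is where the argument breaks. You write that the problem ``reduces to classifying the orbits of ${\rm Sp}(1){\rm Sp}(2)$ on the Cayley Grassmannian of $\mathbb{R}^8$'' and then impose the associative equation only at the chosen basepoint. This would be valid for the round $S^7$, whose cone is flat $\mathbb{R}^8$ with the constant form $\Phi_0$, so that a linear 4-plane is Cayley everywhere as soon as it is Cayley at one point. But the cone over the squashed $S^7$ has full holonomy ${\rm Spin}(7)$: the 4-form $\Phi = r^3\,dr \wedge \tilde\varphi + r^4 *\tilde\varphi$ is \emph{not} translation-invariant, and there is no Cayley Grassmannian in the usual linear-algebraic sense. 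The condition that $V$ be Cayley must be imposed at every point of $V \cap S^7$, and this system is genuinely overdetermined.

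The paper's proof reflects exactly this. After the same isotropy normalization of a frame $e_0,e_1,e_2$ that you propose, it computes the condition $*\tilde\varphi(e_i,e_j,e_k,\cdot)=0$ separately at the three points $e_0$, $e_1$, and $e_2$, obtaining three independent polynomial systems in the frame parameters. Even after solving all three, a spurious solution survives (its tangent 3-plane is associative at $e_0,e_1,e_2$ but not elsewhere), and it is eliminated only by a further check at $(e_0+e_1)/\sqrt{2}$. In your language that spurious plane lies in the case $k=1$ alongside $V_2$, so the split on $k=\dim(W\cap V_-)$ at a single point cannot by itself separate $V_2$ from non-associative planes; the multi-point check is essential, and it is where the actual work lies.
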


\begin{rem}
We see that $\mathcal{M}$ consists of two connected components, 
while 
the corresponding space in the standard $S^{7}$ is a 
homogeneous space 
${\rm Spin}(7)/K$, 
where $K = {\rm SU}(2)^{3}/\mathbb{Z}_{2}$ (\cite{Harvey Lawson}).

Note that $V_{1}$ is a quaternionic plane in $\mathbb{C}^{4} = \mathbb{H}^{2}$
and 
$V_{2}$ arises from a horizontal $I_{1}$-curve of $\mathbb{C}P^{3}$ 
in the sense of Remark \ref{basic concrete example of asso}. 
Moreover, both $V_{j} \cap S^{7}$, where $j=1,2$, are totally geodesic submanifolds in the squashed $S^{7}$. 
Actually, we should classify totally geodesic associative submanifolds, 
but it would be difficult because the squashed $S^{7}$ 
is neither a space of the constant curvature nor a symmetric space. 
It is just a homogeneous space ${\rm Sp}(1) {\rm Sp}(2)/{\rm Sp}(1) {\rm Sp}(1)$. 
\end{rem}

Next, we classify homogeneous associative submanifolds. 

\begin{thm} \label{classification homog asso}
Let $A$ be a connected associative 3-fold in the squashed $S^{7} \subset \mathbb{C}^{4}$ which is the orbit
of a closed Lie subgroup of ${\rm Sp}(1) {\rm Sp}(2)$. 
Then, up to the ${\rm Sp}(1) {\rm Sp}(2)$-action, $A$ is one of the following. 
\begin{enumerate}
\item $L_{1} = V_{1} \cap S^{7}$, where $V_{1}$ is given in Theorem \ref{plane asso}, 
\item $L_{2} = V_{2} \cap S^{7}$, where $V_{2}$ is given in Theorem \ref{plane asso}, 
\item 
$A_{1} = T^{3} \cdot \frac{1}{2} {}^t\! (1, 1, 1, i) \cong T^{3}$, where the $T^{3}$-action is given by (\ref{T3 action}), 
\item
$A_{2} = {\rm SU}(2) \cdot {}^t\! (1, 0, 0, 0) \cong {\rm SU}(2)/ \mathbb{Z}_{3}$, 
where the ${\rm SU}(2)$-action is given by (\ref{irr SU2}), 
\item
$A_{3} = {\rm SU}(2) \cdot {}^t\! (0, 0, 1, 0) \cong {\rm SU}(2)$, 
where the ${\rm SU}(2)$-action is given by (\ref{irr SU2}).
\end{enumerate}
\end{thm}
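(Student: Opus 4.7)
A connected homogeneous associative 3-fold in the squashed $S^{7}$ is an orbit $A = H \cdot x$ of some closed Lie subgroup $H \subset G := \mathrm{Sp}(1)\mathrm{Sp}(2)$. Passing to the identity component of $H$ and, where possible, restricting to a smaller subgroup acting on $A$ with discrete stabilizer, the task reduces to enumerating (i) 3-dimensional Lie subalgebras $\mathfrak{h} \subset \mathfrak{g} = \mathfrak{sp}(1) \oplus \mathfrak{sp}(2)$ up to $\mathrm{Ad}(G)$-conjugation, and (ii) for each $\mathfrak{h}$, the basepoints $x \in S^{7}$ (up to the normaliser $N_{G}(H)$) for which $T_{x}A$ is an associative 3-plane. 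Since $G$ preserves the squashed nearly parallel $G_{2}$ 3-form $\varphi$, associativity at $x$ reduces to the algebraic condition $\varphi|_{T_{x}A} = \mathrm{vol}_{T_{x}A}$, which need only be checked at one representative basepoint per $N_{G}(H)$-orbit.

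For step (i), compactness of $\mathfrak{g}$ forces every Lie subalgebra to be reductive, so the only 3-dimensional possibilities are the abelian $\mathfrak{t}^{3}$ and the simple $\mathfrak{su}(2)$. Since $\mathrm{rank}(G) = 3$, any 3-dimensional abelian subalgebra is conjugate to the standard maximal torus. An embedding $\mathfrak{su}(2) \hookrightarrow \mathfrak{g}$ is determined up to conjugation by its two projections: the projection to $\mathfrak{sp}(1)$ is either zero or an isomorphism, and the projection to $\mathfrak{sp}(2)$ is (up to $\mathrm{Sp}(2)$-conjugation) either trivial, a factor of $\mathfrak{sp}(1) \oplus \mathfrak{sp}(1) \subset \mathfrak{sp}(2)$ (acting as $V_{1} \oplus 2V_{0}$ on $\mathbb{C}^{4}$), the diagonal of these two factors (acting as $2V_{1}$), or the principal embedding (acting as the irreducible $V_{3} = \mathrm{Sym}^{3}\mathbb{C}^{2}$). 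Coupling the two projections, including diagonal couplings when both are non-trivial copies of $\mathfrak{su}(2)$, yields a finite explicit list of conjugacy classes.

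For step (ii), the cases where $\mathfrak{h}$ involves only regular $\mathfrak{sp}(1)$-subalgebras of $\mathfrak{sp}(2)$ recover the totally geodesic orbits $L_{1}$ and $L_{2}$ already identified in Theorem \ref{plane asso}. The principal $\mathfrak{su}(2) \hookrightarrow \mathfrak{sp}(2)$ has a one-parameter family of 3-dimensional orbits on $S^{7}$, and evaluating $\varphi$ singles out exactly two: $A_{2}$ (through a highest-weight vector, with stabilizer $\mathbb{Z}_{3}$) and $A_{3}$ (through a weight-zero vector, with trivial stabilizer). The main obstacle is the torus case: the basepoint $x = (r_{1}e^{i\alpha_{1}}, \ldots, r_{4}e^{i\alpha_{4}}) \in S^{7}$ ranges over a four-parameter family, and the associativity equation becomes a non-trivial polynomial system in the moduli $r_{j}$ and phases $\alpha_{j}$ whose unique solution, up to the $G$-action, is the equi-modular point $\frac{1}{2}{}^{t}\!(1,1,1,i)$, giving $A_{1} \cong T^{3}$. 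One finally verifies that the remaining diagonal $\mathfrak{su}(2)$-embeddings, which couple the external $\mathfrak{sp}(1)$ with a non-trivial projection to $\mathfrak{sp}(2)$, produce no additional associative orbits.
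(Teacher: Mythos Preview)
Your plan follows the same strategy as the paper's Section 6: reduce to connected subgroups with Lie algebra $\mathfrak{t}^{3}$ or $\mathfrak{su}(2)$, enumerate the embeddings up to conjugacy, and test each orbit family against the associator. Two points of comparison. First, the paper handles the possibility $\dim\mathfrak{g}=4,6$ by explicit case analysis (showing each such subalgebra reduces to a 3-dimensional one already on the list) rather than by your ``restrict to a subgroup with discrete stabiliser'' shortcut; your reduction is cleaner in principle but the paper's is more concrete. Second, you explicitly include the diagonal $\mathfrak{su}(2)$-embeddings coupling $\mathfrak{sp}(1)_{L}$ with a non-trivial projection to $\mathfrak{sp}(2)_{R}$, whereas the paper asserts without argument that every $\mathfrak{su}(2)\subset\mathfrak{g}$ is either $\mathfrak{sp}(1)_{L}$ or contained in $\mathfrak{sp}(2)_{R}$; on this point your enumeration is more careful, though you still owe the verification that these diagonals produce no new associative orbits. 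Conversely, the substance of the paper lies in the computations your plan only sketches: the $T^{3}$-case is reduced to $d(\mathrm{Im}(z_{1}z_{2}\bar z_{3}\bar z_{4}))|_{TS^{7}}=0$ and solved explicitly; the $2V_{1}$-embedding is dispatched by observing that all its orbits are round 3-spheres of constant curvature $1$, hence plane sections, so Theorem \ref{plane asso} applies; and the principal $V_{3}$-embedding is handled via Mashimo's orthogonal-frame lemma (Lemma \ref{on frame SU2 orbit}), splitting on whether the three $\lambda_{j}$ are all distinct (ruled out by a derivative computation) or two coincide (yielding $A_{2}$ and $A_{3}$).
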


\begin{rem}
Since $T^{3}$ in (\ref{T3 action}) and ${\rm SU}(2)$ in (\ref{irr SU2}) are contained in 
${\rm SU}(4) \subset {\rm Spin}(7)$
by an appropriate change of  coordinates, 
we obtain the similar orbits 
$A_{1}, A_{2}$, and $A_{3}$ as in the standard $S^{7}$ case (\cite{Lotay3}).
However, since $G_{2}$ is not contained in ${\rm Sp}(1) {\rm Sp}(2)$, 
there are no corresponding associative orbits in the squashed $S^{7}$ 
to Lagrangian (totally real) submanifolds in $S^{6}$ classified by \cite{Mashimo}.
\end{rem}

\begin{rem} \label{property homog asso}
The examples 
$A_{1}, A_{2}$, and $A_{3}$ are Hopf lifts of 
$I_{1}^{'}$-holomorphic curves in $\mathbb{C}P^{3}$, 
where $I_{1}^{'}$ is an almost complex structure on $\mathbb{C}P^{3}$ given by (\ref{almcpxstr CP3}).
In particular, 
$A_{2}$ (resp. $A_{3})$ is a Hopf lift of 
a horizontal holomorphic curve 
(resp. a null-torsion $I_{1}^{'}$-holomorphic curve 
defined in Definition \ref{null-torsion def}) in $\mathbb{C}P^{3}$. 
Thus, unfortunately, we cannot find homogeneous examples 
which do not arise from other geometries as in the standard $S^{7}$ case (\cite{Lotay3}). 
It is a further problem 
to find an associative submanifold 
which is not congruent to 
the fiber of $S^{7} \rightarrow S^{4}$ or 
the Hopf lift of an $I_{1}^{'}$-holomorphic curve in $\mathbb{C}P^{3}$ 
by the ${\rm Sp}(1){\rm Sp}(2)$-action. 
As far as the author is aware, 
such examples 
are not known so far. 
\end{rem}

However, 
by virtue of this property, 
we can explain their associative deformations. 
\begin{thm} \label{deform summary}
The associative deformations of $L_{1}, L_{2}$, and $A_{1}$ are trivial, i.e. 
all the associative deformations come from the ${\rm Sp}(1) {\rm Sp}(2)$-action, 
while $A_{2}$ and $A_{3}$ have nontrivial associative deformations. 

All the associative deformations of $A_{2}$ consist of 
deformations of $p_{1} (A_{2})$ as a horizontal holomorphic curve, 
i.e. 
those from the ${\rm PGL}(4,\mathbb{C})$-action on $\mathbb{C}P^{3}$ via the Hopf lift, 
and those from actions of $j, k \in {\rm Sp}(1)$, 
where $p_{1}: S^{7} \rightarrow \mathbb{C}P^{3}$ is a projection. 

All the associative deformations of $A_{3}$ consist of 
deformations of $p_{1} (A_{3})$ as a null-torsion holomorphic curve, 
and 
those from actions of 
$j, k \in {\rm Sp}(1)$.
\end{thm}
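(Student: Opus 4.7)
The plan is to invoke McLean's deformation theory for associative 3-folds: infinitesimal associative deformations of $A \subset S^7$ are sections of the normal bundle $\nu(A)$ lying in the kernel of a twisted Dirac-type operator $D_A$, and the nontrivial content of the theorem is to identify $\ker D_A$ explicitly and prove that each element integrates to an actual one-parameter family. Since each of $L_1, L_2, A_1, A_2, A_3$ is a homogeneous orbit in the squashed $S^7$, the bundle $\nu(A)$ is the associated bundle to an explicit isotropy representation, and by Peter-Weyl together with Frobenius reciprocity, $\Gamma(\nu(A))$ decomposes into finitely many isotypic families on which $D_A$ acts as a finite-dimensional endomorphism. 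Computing $\ker D_A$ therefore reduces to linear algebra weight by weight.

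For $L_1, L_2, A_1$ the strategy is to compare $\dim \ker D_A$ with the dimension of the $G$-orbit of $A$ in submanifold space, where $G = {\rm Sp}(1){\rm Sp}(2)$. The inclusion of the latter in the former is automatic, because every Killing field on the squashed $S^7$ induces an infinitesimal associative deformation; matching dimensions forces equality, and this simultaneously proves triviality and integrability, since every infinitesimal deformation arises from the ambient group action. For $A_2$ and $A_3$ the Dirac kernel is strictly larger than the $G$-direction, and the extra directions must be accounted for by independent geometric sources. By Remark \ref{property homog asso}, the Hopf lift of any $I_1'$-holomorphic curve in $\mathbb{C}P^3$ is associative, so every one-parameter family of such curves through $p_1(A_j)$ lifts to an associative family; for $A_2$ the relevant family consists of horizontal holomorphic curves, on which ${\rm PGL}(4,\mathbb{C})$ acts, while for $A_3$ it consists of null-torsion $I_1'$-holomorphic curves. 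In addition, although $j, k \in {\rm Sp}(1)$ do not preserve the full squashed $G_2$-structure, their action on $\mathbb{C}^4$ does preserve associativity of these particular orbits, producing further deformations transverse to the $G$-direction. Checking that the dimensions of (i) the $G$-orbit, (ii) the horizontal or null-torsion moduli via the Hopf lift, and (iii) the $j, k$-deformations sum to $\dim \ker D_A$ then gives the classification of $\ker D_A$ and establishes integrability, since every infinitesimal deformation is realised by an honest one-parameter family.

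The main obstacle will be the explicit computation of $\ker D_A$ on $A_2$ and $A_3$: one must write $D_A$ using a local trivialization of the spinor bundle together with the structure constants of the associative 3-form on the squashed $S^7$, decompose $\Gamma(\nu(A_j))$ into irreducible types, and identify which types contribute to the kernel; the nontrivial isotropy representations make the weight bookkeeping delicate. A secondary difficulty is the exactness of the dimension count: one must verify that the deformations from horizontal or null-torsion families and from $j, k \in {\rm Sp}(1)$ are linearly independent modulo the $G$-direction, so that they genuinely exhaust $\ker D_A$ rather than merely providing a lower bound.
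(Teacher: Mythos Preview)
Your overall strategy matches the paper's: compute $\ker D_A$ via Peter--Weyl on each homogeneous orbit, compare with the dimension of the ${\rm Sp}(1){\rm Sp}(2)$-orbit, and for $A_2, A_3$ identify the excess with moduli of $I_1'$-holomorphic curves via the Hopf lift. Two points need correction.

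First, your treatment of $j, k \in {\rm Sp}(1)$ is wrong. The group ${\rm Sp}(1)_L$ is part of the automorphism group ${\rm Sp}(1){\rm Sp}(2)$ of the squashed $S^7$ (Lemma \ref{auto sq S7}), so $j$ and $k$ \emph{do} preserve the full $G_2$-structure and the deformations they generate lie \emph{inside} the $G$-direction, not transverse to it. The correct decomposition for $A_2$ is: $\dim\ker D_{A_2}=16$, and this splits as $14$ from moduli of $p_1(A_2)$ as a horizontal holomorphic curve (i.e.\ the ${\rm PSp}(2,\mathbb{C})$-orbit in $\mathbb{C}P^3$, modulo a ${\rm PSL}(2,\mathbb{C})$ stabilizer) plus $2$ from $j,k$; the $9$-dimensional $G$-direction sits inside this $16$, with the ${\rm Sp}(2)_R$-part inside the $14$ and the $j,k$-part accounting for the $2$. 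If you treat $j,k$ as independent of the $G$-action you will double-count and your dimension bookkeeping will not close.

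Second, for $A_3$ you say the extra deformations come from null-torsion $I_1'$-holomorphic curves, but you give no mechanism for computing the dimension of that moduli space. The paper's essential ingredient here is Xu's one-to-one correspondence between null-torsion $I_1'$-holomorphic curves and horizontal holomorphic curves in $\mathbb{C}P^3$ (Proposition \ref{one-to-one curve}), together with the observation that $\widehat{p_1(A_3)}=p_1(A_2)$ under this correspondence. This reduces the null-torsion count to the horizontal count already done for $A_2$, yielding $14+2=16=\dim\ker D_{A_3}$. Without this bijection you have no way to verify that null-torsion deformations fill out the Dirac kernel, and your integrability argument for $A_3$ is incomplete.
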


\begin{rem}
The deformations of the associative submanifolds in the standard $S^{7}$
are studied by the author (\cite{K deform}). 
We could not explain the deformation space of 
the associative submanifold corresponding to $A_{3}$, 
which did not arise from other known geometries. 
However, 
in the squashed $S^{7}$ case, 
the associative deformations of $A_{3}$ are explained 
by the property in Remark \ref{property homog asso}. 
We use the one-to-one correspondence 
between null-torsion $I_{1}^{'}$-holomorphic curves 
and horizontal holomorphic curves in $\mathbb{C}P^{3}$ (\cite{Xu pseudo}). 
\end{rem}

This paper is organized as follows. 
In Section 2, we review the fundamental
facts of $G_{2}$ and ${\rm Spin}(7)$ geometry. 
In Section 3, 
we review the canonical variation and summarize some useful equations. 
In Section 4, we apply it to the 7-sphere $S^{7}$ 
and describe the nearly parallel $G_{2}$-structure
on the squashed $S^{7}$ explicitly. 
Then we give 
basic examples of associative submanifolds in the squashed $S^{7}$. 
In Section 5, 
we prove Theorem \ref{plane asso} by choosing a ``good" frame by 
${\rm Sp}(1) {\rm Sp}(2)$-action. 
In Section 6, 
we prove Theorem \ref{classification homog asso} 
as an analogue of \cite{Lotay3}, \cite {Mashimo}. 
In Section 7, 
we prove Theorem \ref{deform summary} 
by using the representation theory as \cite{K deform}, \cite{Ohnita_def}. 

\noindent{{\bf Acknowledgements}}: 
The author would like to thank Professor Katsuya Mashimo 
for his valuable advice about the representation theory.


\section{Preliminaries}

\subsection{$G_{2}$ and ${\rm Spin}(7)$ geometry}

\begin{definition} \label{def on R7}
Define a $3$-form $\varphi_{0}$ on $\mathbb{R}^{7}$ by
\begin{eqnarray*}
\varphi_{0} = dx_{123} +dx_{1} (dx_{45} +dx_{67}) +dx_{2}(dx_{46} - dx_{57}) - dx_{3}(dx_{47} + dx_{56}), 
\end{eqnarray*}
where  $(x_{1}, \cdots, x_{7})$ is the standard coordinate 
on $\mathbb{R}^{7}$ 
and wedge signs are omitted. 
The Hodge dual of $\varphi_{0}$ is given by 
\begin{eqnarray*}
*\varphi_{0} = dx_{4567} +dx_{23} (dx_{67} + dx_{45}) +dx_{13}(dx_{57} - dx_{46}) - dx_{12}(dx_{56} + dx_{47}). 
\end{eqnarray*}

Decompose $\mathbb{R}^{8} = \mathbb{R} \oplus \mathbb{R}^{7}$
and denote by $x_{0}$ the coordinate on $\mathbb{R}$. 
Define a self-dual $4$-form $\Phi_{0}$ on $\mathbb{R}^{8}$ by
\begin{align*}
\Phi_{0} = dx_{0} \wedge \varphi_{0} + * \varphi_{0}. 
\end{align*}
If we identify $\mathbb{R}^{8} \cong \mathbb{C}^{4}$ via 
$\mathbb{R}^{8} \ni (x_{0}, \cdots, x_{7}) 
\mapsto 
(x_{0} + i x_{1}, x_{2} + i x_{3}, x_{4} + i x_{5}, x_{6} + i x_{7}) 
=: (z_{1}, z_{2}, z_{3}, z_{4}) \in \mathbb{C}^{4}
$, 
then $\Phi_{0}$ is described as 
\begin{align*}
\Phi_{0} = \frac{1}{2} \omega_{0} \wedge \omega_{0} + {\rm Re} \Omega_{0}, 
\end{align*}
where $\omega_{0} = \frac{i}{2} \sum_{j = 1}^{4} dz_{j \overline{j}}$ and 
$\Omega_{0} = dz_{1234}$ are the standard 
K\"{a}hler form and the holomorphic volume form on $\mathbb{C}^{4}$, respectively. 
\end{definition}

The stabilizers of $\varphi_{0}$ and $\Phi_{0}$ are 
the exceptional Lie group $G_{2}$ and ${\rm Spin}(7)$, respectively: 
\begin{eqnarray*}
G_{2} = \{ g \in GL(7, \mathbb{R})  ;  g^{*}\varphi_{0} = \varphi_{0} \}, \qquad 
{\rm Spin}(7) = \{ g \in GL(8, \mathbb{R})  ;  g^{*}\Phi_{0} = \Phi_{0} \}. 
\end{eqnarray*}

The Lie group $G_{2}$ fixes 
the standard metric $g_{0} = \sum^7_{i=1} (dx_{i})^{2}$ and the orientation on $\mathbb{R}^{7}$. 
They are uniquely determined by $\varphi_{0}$ via 
\begin{eqnarray}\label{g varphi}
6 g_{0}(v_{1}, v_{2}) {\rm vol}_{g_{0}} = i(v_{1})\varphi_{0} \wedge i(v_{2})\varphi_{0} \wedge \varphi_{0}, 
\end{eqnarray}
where ${\rm vol}_{g_{0}}$ is a volume form of $g_{0}$, 
$i( \cdot )$ is the interior product, and $v_{i} \in T(\mathbb{R}^{7})$.

Similarly, 
${\rm Spin}(7)$ fixes 
the standard metric $h_{0} = \sum^7_{i=0} (dx_{i})^{2}$ and the orientation on $\mathbb{R}^{8}$. 
They are uniquely determined by $\Phi_{0}$ via 
\begin{eqnarray} \label{g Phi}
\Phi_{0} ^{2} = 14 {\rm vol}_{h_{0}}, \qquad 
(i(w_{2}) i(w_{1}) \Phi_{0})^{2} \wedge \Phi_{0} = 6 
\|  w_{1} \wedge w_{2} \|_{h_{0}}^{2} 
{\rm vol}_{h_{0}}, 
\end{eqnarray}
where ${\rm vol}_{h_{0}}$ is a volume form of $h_{0}$, 
and $w_{i} \in T(\mathbb{R}^{8})$.

\begin{definition}
Let $Y$ be an oriented 7-manifold and 
$\varphi$ a 3-form on $Y$. 
A 3-form $\varphi$ is called a {\bf $G_{2}$-structure} on $Y$ if 
for each $y \in Y$, there exists an oriented isomorphism between $T_{y}Y$ and $\mathbb{R}^{7}$ 
identifying $\varphi_{y}$ with $\varphi_{0}$. 
From (\ref{g varphi}), $\varphi$ induces the metric $g$ 
and the volume form on $Y$.
A  $G_{2}$-structure $\varphi$ is said to be {\bf nearly parallel} 
if $d \varphi = 4 *  \varphi$. 
We call a manifold with a nearly parallel $G_{2}$-structure 
a {\bf nearly parallel $G_{2}$-manifold} for short. 
A  $G_{2}$-structure $\varphi$ is called {\bf torsion-free} if 
$ d\varphi = 0, d*\varphi = 0$.

Let $X$ be an oriented 8-manifold and 
$\Phi$ a 4-form on $X$. 
A 4-form $\Phi$ is called a {\bf ${\rm Spin}(7)$-structure} on $X$ if 
for each $x \in X$, there exists an oriented isomorphism between $T_{x}X$ and $\mathbb{R}^{8}$ 
identifying $\Phi_{x}$ with $\Phi_{0}$. 
From (\ref{g Phi}), $\Phi$ induces the metric $h$ and 
the volume form on $X$. 
A ${\rm Spin}(7)$-structure $\Phi$ is called {\bf torsion-free} if 
$d\Phi = 0$. 
\end{definition}

\begin{lem} \cite{Salamon}
A $G_{2}$-structure $\varphi$ is torsion-free if and 
only if {\rm Hol($g$)} $\subset G_{2}$.  
A ${\rm Spin}(7)$-structure $\Phi$ is torsion-free if and 
only if {\rm Hol($h$)} $\subset {\rm Spin}(7)$.  
\end{lem}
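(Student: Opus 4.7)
The plan is to reduce each equivalence to the assertion that the defining form is parallel with respect to the Levi-Civita connection, and then to invoke the Fern\'andez--Gray classification of $G_2$-structures (resp.\ the Fern\'andez classification of $\mathrm{Spin}(7)$-structures) to replace ``parallel'' by a condition on exterior derivatives alone.

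First I would establish the holonomy principle in this setting: a $G_2$-structure $\varphi$ satisfies $\nabla^g \varphi = 0$ if and only if $\mathrm{Hol}(g) \subset G_2$, and similarly $\nabla^h \Phi = 0$ iff $\mathrm{Hol}(h) \subset \mathrm{Spin}(7)$. Indeed, $G_2 \subset O(7)$ is precisely the stabilizer of $\varphi_0$ and $\mathrm{Spin}(7) \subset O(8)$ is the stabilizer of $\Phi_0$, so parallelism of the defining form is equivalent to its value at a single point being preserved by the action of the holonomy group, by the usual parallel transport argument.

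The nontrivial step is then to show that $\nabla^g \varphi = 0$ is equivalent to $d\varphi = 0$ together with $d{*\varphi} = 0$, and that $\nabla^h \Phi = 0$ is equivalent to $d\Phi = 0$. The forward directions are immediate because $d$ is the alternating part of $\nabla$ and $*$ commutes with $\nabla$. For the converse I would decompose $\nabla^g \varphi \in \Gamma(T^*Y \otimes \Lambda^3 T^*Y)$ into irreducible $G_2$-submodules, obtaining the four Fern\'andez--Gray torsion components $\tau_0 \in \Omega^0$, $\tau_1 \in \Omega^1$, $\tau_2 \in \Omega^2_{14}$, $\tau_3 \in \Omega^3_{27}$, and similarly decompose $\nabla^h \Phi \in \Gamma(T^*X \otimes \Lambda^4 T^*X)$ into its $\mathrm{Spin}(7)$-irreducible pieces. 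I would then expand $d\varphi$, $d{*\varphi}$, $d\Phi$ in terms of the torsion components and verify that each component is algebraically recovered from these exterior derivatives, so that their simultaneous vanishing forces $\nabla\varphi = 0$ (resp.\ $\nabla \Phi = 0$).

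The main obstacle is the representation-theoretic bookkeeping: one must pin down the four $G_2$-torsion classes (and their Spin(7) analogues) explicitly and verify the linear isomorphism between the intrinsic torsion and the exterior derivatives of $\varphi$, $*\varphi$ and $\Phi$. This is where the $G_2$-decompositions $\Lambda^2 = \Omega^2_7 \oplus \Omega^2_{14}$ and $\Lambda^3 = \Omega^3_1 \oplus \Omega^3_7 \oplus \Omega^3_{27}$ (and the analogous Spin(7) splittings) enter in an essential way. Once these standard but intricate decompositions are in hand, both equivalences follow at once.
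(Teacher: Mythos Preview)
The paper does not prove this lemma at all: it is stated with a citation to Salamon's book and no argument is given. Your proposal outlines the standard proof (holonomy principle reducing the question to $\nabla\varphi=0$, respectively $\nabla\Phi=0$, followed by the Fern\'andez--Gray and Fern\'andez intrinsic-torsion decompositions to identify parallelism with the vanishing of $d\varphi$, $d{*}\varphi$, respectively $d\Phi$), and this is correct and is essentially what one finds in the cited reference. There is nothing further in the paper to compare against.
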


\begin{lem} \label{cha_NP}
The 3-form $\varphi$ is a nearly parallel $G_{2}$-structure 
if and only if its Riemannian cone 
$C(Y) = \mathbb{R}_{>0} \times Y$ admits a torsion-free 
${\rm Spin}(7)$-structure 
$\Phi =  r^{3} dr \wedge \varphi + r^{4} * \varphi$ 
with the induced cone metric 
$\overline{g} = dr^{2} + r^{2} g$. 
\end{lem}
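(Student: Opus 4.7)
The plan is to establish the equivalence in two stages: first verify that the proposed 4-form $\Phi = r^{3}\, dr \wedge \varphi + r^{4} * \varphi$ is pointwise of ${\rm Spin}(7)$-type and induces the cone metric $\bar g$, then reduce the torsion-free condition $d\Phi = 0$ to the nearly parallel condition on $\varphi$ and appeal to the preceding lemma of Salamon.

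For the algebraic step, I would fix $y \in Y$ and an orthonormal coframe $\{e^{i}\}_{i=1}^{7}$ on $T_{y}Y$ in which $\varphi_{y}$ takes the model form $\varphi_{0}$. Then $\{dr,\, re^{1}, \dots, re^{7}\}$ is an orthonormal coframe for $\bar g$ at $(r,y)$, and rewriting $\Phi$ in this rescaled frame absorbs the factors of $r$ exactly: a $3$-form on $Y$ picks up $r^{-3}$ and a $4$-form picks up $r^{-4}$, so $\Phi$ becomes $dr \wedge \varphi_{0} + *\varphi_{0} = \Phi_{0}$ under the obvious identification $\mathbb{R}^{8} = \mathbb{R}\, dr \oplus \mathbb{R}^{7}$. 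This simultaneously verifies the ${\rm Spin}(7)$ algebraic type and that the induced metric is $\bar g$.

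For the differential step, I would compute $d\Phi$ using that $r^{3}\, dr = \tfrac{1}{4} d(r^{4})$ is closed and that $\varphi, *\varphi$ are pulled back from $Y$. The graded Leibniz rule gives $d(r^{3}\, dr \wedge \varphi) = -r^{3}\, dr \wedge d\varphi$ and $d(r^{4} * \varphi) = 4 r^{3}\, dr \wedge *\varphi + r^{4}\, d(*\varphi)$, so that
\begin{equation*}
d\Phi = r^{3}\, dr \wedge \bigl(4 * \varphi - d\varphi\bigr) + r^{4}\, d(*\varphi).
\end{equation*}
The two summands live in complementary $dr$-degree components on the cone, hence $d\Phi = 0$ iff both $d\varphi = 4 * \varphi$ and $d(*\varphi) = 0$ hold. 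But differentiating $d\varphi = 4 * \varphi$ immediately yields $d(*\varphi) = 0$, so $d\Phi = 0$ is equivalent to the single nearly parallel condition $d\varphi = 4 * \varphi$. Combined with Salamon's characterization of torsion-free ${\rm Spin}(7)$-structures as those satisfying $d\Phi = 0$, this completes both directions.

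I do not expect a serious obstacle: the algebraic verification is a direct scaling computation, and the exterior derivative splits into only two manifestly independent pieces. The only thing worth watching carefully is the bookkeeping of $r$-powers when passing between the $Y$-orthonormal and cone-orthonormal frames, and the sign in $d(r^{3}\, dr \wedge \varphi) = -r^{3}\, dr \wedge d\varphi$ coming from the degree of $r^{3}\, dr$.
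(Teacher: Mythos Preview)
Your proof is correct. The paper states this lemma without proof, treating it as a standard fact, so there is no approach to compare against; your two-step argument (pointwise algebraic verification via a rescaled coframe, then splitting $d\Phi$ by $dr$-degree) is the standard one and is cleanly executed. One small remark: you do not actually need to invoke Salamon's lemma at the end, since the paper \emph{defines} a torsion-free ${\rm Spin}(7)$-structure directly by the condition $d\Phi = 0$; Salamon's result is the further equivalence with ${\rm Hol}(h) \subset {\rm Spin}(7)$, which is not needed here.
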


Next, we give a summary of the facts 
about the submanifolds. 
Let $Y$ be a manifold with a $G_{2}$-structure $\varphi$ and the induced metric $g$.

\begin{lem} \cite{Harvey Lawson} \label{def_asso_coasso}
For every oriented $k$-dimensional subspace $V^{k} \subset T_{p}Y$ 
$(\forall p \in Y, k = 3, 4),$ 
we have
$
\varphi|_{V^{3}} \leq {\rm vol}_{V^{3}}, \ 
*\varphi|_{V^{4}} \leq {\rm vol}_{V^{4}}.
$
An oriented 3-submanifold $L^{3} \subset Y$ is called {\bf associative} 
if $\varphi|_{TL^{3}} = {\rm vol}_{L^{3}}$.
An oriented 4-submanifold $L^{4}$ is called {\bf coassociative} 
if $*\varphi|_{TL^{4}} = {\rm vol}_{L^{4}}$.
\end{lem}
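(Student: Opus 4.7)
The plan is to establish the two calibration-type inequalities; the definitions of associative and coassociative then follow as the equality cases. Because $\varphi$ is a $G_{2}$-structure, each tangent space $T_{p}Y$ admits an oriented linear isometry to $(\mathbb{R}^{7}, g_{0})$ identifying $\varphi_{p}$ with $\varphi_{0}$, and this identification sends the Riemannian volume form to the standard one. It therefore suffices to prove the pointwise estimates
\begin{equation*}
\varphi_{0}(v_{1}, v_{2}, v_{3}) \leq 1, \qquad *\varphi_{0}(w_{1}, w_{2}, w_{3}, w_{4}) \leq 1
\end{equation*}
for every oriented orthonormal 3-frame $(v_{1}, v_{2}, v_{3})$ and every oriented orthonormal 4-frame $(w_{1}, w_{2}, w_{3}, w_{4})$ in $\mathbb{R}^{7}$.

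For the associative inequality I would introduce the $G_{2}$-invariant cross product $\times : \mathbb{R}^{7} \times \mathbb{R}^{7} \rightarrow \mathbb{R}^{7}$ characterised by $g_{0}(u \times v, w) = \varphi_{0}(u, v, w)$. The heart of the argument is the Gram-type identity
\begin{equation*}
|u \times v|^{2} = |u|^{2}|v|^{2} - g_{0}(u, v)^{2}.
\end{equation*}
Given this, Cauchy--Schwarz yields $\varphi_{0}(u, v, w) = g_{0}(u \times v, w) \leq |u \times v|\,|w|$, whose right-hand side equals $|u \wedge v \wedge w|$; evaluated on an oriented orthonormal triple this is exactly $1$.

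The coassociative inequality follows by Hodge duality. For any oriented orthonormal basis $(e_{1}, \dots, e_{7})$ of $\mathbb{R}^{7}$ one has $*\varphi_{0}(e_{1}, e_{2}, e_{3}, e_{4}) = \varphi_{0}(e_{5}, e_{6}, e_{7})$, so the bound on an oriented orthonormal 4-frame reduces instantly to the bound on its oriented orthogonal 3-frame complement, which has already been established.

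The main obstacle is the cross-product identity itself. A clean route is to exploit $G_{2}$-invariance: the group $G_{2}$ acts transitively on oriented orthonormal 2-frames in $\mathbb{R}^{7}$ (the stabilizer of $(e_{1}, e_{2})$ is an $\mathrm{SU}(2)$ subgroup, and a dimension count against the 11-dimensional Stiefel manifold confirms transitivity), so it suffices to verify $|u \times v|^{2} = 1$ for the single pair $(e_{1}, e_{2})$, where the formula $\varphi_{0} = dx_{123} + \cdots$ immediately yields $e_{1} \times e_{2} = e_{3}$. The bilinearity of $\times$, combined with the decomposition of $v$ into components along and perpendicular to $u$, then extends the identity to arbitrary pairs. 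Alternatively, one can bypass transitivity and expand $|u \times v|^{2}$ term by term from the explicit formula for $\varphi_{0}$; this is tedious but elementary.
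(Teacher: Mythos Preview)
The paper does not supply its own proof of this lemma; it is stated with a citation to Harvey--Lawson and used as background. Your argument is correct and is in fact the standard Harvey--Lawson approach: reduce to $(\mathbb{R}^{7},\varphi_{0})$ via the local model, prove the comass-one bound for $\varphi_{0}$ using the cross product identity $|u\times v|^{2}=|u|^{2}|v|^{2}-g_{0}(u,v)^{2}$ together with Cauchy--Schwarz, and then obtain the $*\varphi_{0}$ bound by Hodge duality on the orthogonal complement.

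One small remark: your sentence ``whose right-hand side equals $|u\wedge v\wedge w|$'' is not literally true for arbitrary $u,v,w$ (take $w$ in the span of $u,v$); what you actually need, and what you immediately use, is that on an \emph{orthonormal} triple both $|u\times v|\,|w|$ and $|u\wedge v\wedge w|$ equal $1$. This does not affect the validity of the proof, but you may want to rephrase that step. The justification of the cross product identity via transitivity of $G_{2}$ on orthonormal $2$-frames, with the explicit check $e_{1}\times e_{2}=e_{3}$, is exactly how Harvey--Lawson proceed.
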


\begin{lem} \cite{Harvey Lawson}  \label{equiv condi asso coasso}
An oriented 3-submanifold $L^{3}$ is associative if and only if 
$* \varphi (v_{1}, v_{2}, v_{3}, \cdot) = 0$ for any $v_{j} \in TL^{3}$. 
An oriented 4-submanifold $L^{4}$ is coassociative if and only if 
$\varphi|_{TL^{4}} = 0$. 
\end{lem}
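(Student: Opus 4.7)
Since both conditions are pointwise, I would reduce to linear algebra on $T_pY \cong \mathbb{R}^{7}$ equipped with $\varphi = \varphi_0$, and treat the two equivalences separately, the second by Hodge duality from the first.

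\textbf{Associative direction.} Pick an oriented orthonormal basis $\{e_1,e_2,e_3\}$ of $T_pL^3$, extend to an oriented ONB $\{e_1,\ldots,e_7\}$ of $T_pY$, and set $\alpha := \varphi(e_1,e_2,e_3)$ together with the vector $N \in T_pY$ defined by $\langle N, v\rangle = *\varphi(e_1,e_2,e_3,v)$. Because $*\varphi$ is alternating, $N$ is automatically orthogonal to $T_pL^3$, so $*\varphi(e_1,e_2,e_3,\cdot) = 0$ on $T_pY$ is equivalent to $N = 0$. The crucial input is the Pythagorean-type identity
\[
\alpha^2 + |N|^2 = 1,
\]
which can be established either by direct expansion in the standard $\varphi_0$-coordinates, or octonionically by observing that in $\mathbb{O} = \mathbb{R}\oplus\mathbb{R}^{7}$ one has $e_1(e_2 e_3) = -\alpha + (\text{imaginary part})$ with the imaginary part encoding $N$, and $|e_1(e_2 e_3)| = 1$. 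Combined with Lemma \ref{def_asso_coasso}, which gives $\alpha \le 1$ with equality iff $L^3$ is associative, the identity forces $\alpha = 1 \iff N = 0$, finishing the equivalence.

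\textbf{Coassociative direction.} I would apply Hodge duality. Let $V^{3} = (T_pL^4)^{\perp}$, oriented so that $T_pL^4 \oplus V^{3}$ recovers the ambient orientation; for ONBs $\{e_1,\ldots,e_4\}$ of $T_pL^4$ and $\{e_5,e_6,e_7\}$ of $V^{3}$ a short computation with $*\varphi = \star\varphi$ in the dual basis yields
\[
*\varphi(e_1,e_2,e_3,e_4) = \varphi(e_5,e_6,e_7),
\]
so $L^4$ is coassociative iff $V^{3}$ is associative. Applying the already-proven first part to $V^{3}$, associativity is equivalent to the three-fold contraction of $*\varphi$ into $V^{3}$ vanishing identically on $T_pY$, which by the same Hodge dictionary is equivalent to $\varphi(w_1,w_2,w_3) = 0$ for all $w_j \in T_pL^4$, i.e.\ $\varphi|_{TL^4} = 0$.

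The only nontrivial step is the Pythagorean identity $\alpha^2 + |N|^2 = 1$; the rest is routine bookkeeping with orientations on complementary subspaces and the $\varphi \leftrightarrow *\varphi$ duality.
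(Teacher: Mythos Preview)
The paper does not give its own proof of this lemma; it is simply quoted from Harvey--Lawson \cite{Harvey Lawson}. Your argument is correct and is precisely the Harvey--Lawson route: the associator-type identity $\varphi(e_1,e_2,e_3)^2 + |\chi(e_1,e_2,e_3)|^2 = 1$ (your $\alpha^2+|N|^2=1$) is exactly their Theorem~1.6 in Chapter~IV, and the coassociative case is handled there by the same orthogonal-complement duality you describe.

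One small caveat worth recording: your identity only gives $N=0 \iff \alpha = \pm 1$, so strictly speaking the condition $*\varphi(v_1,v_2,v_3,\cdot)=0$ characterises associativity \emph{up to a choice of orientation} on $L^3$. The paper's statement (and Harvey--Lawson's) carries the same harmless imprecision, and in practice one simply equips $L^3$ with the orientation for which $\varphi|_{TL^3}>0$; but if you want the argument to be airtight you should say this explicitly.
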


\begin{rem} \label{def of cross product}
Define the cross product $\times : TY \times TY \rightarrow TY$ by 
\begin{align*}
g(u \times v, w) = \varphi (u, v, w)
\end{align*}
for $u, v, w \in TY$. 
When $L^{3}$ is associative, 
there exists an orthonormal basis $\{ e_{1}, e_{2}, e_{3} \}$ 
satisfying $e_{3} = e_{1} \times e_{2}$ 
at any point in $L^{3}$.
\end{rem}

\begin{definition}
Let $X$ be a manifold with a ${\rm Spin}(7)$-structure $\Phi$.  
Then for 
every oriented $4$-dimensional subspace $W \subset T_{x}X$ 
$(\forall x \in X),$ 
we have
$
\Phi|_{W} \leq {\rm vol}_{W}. 
$
An oriented 4-submanifold $N \subset X$ is called {\bf Cayley} 
if $\Phi|_{TN} = {\rm vol}_{N}$.
\end{definition}

\begin{lem}
Let $(Y, \varphi, g)$ be a nearly parallel $G_{2}$-manifold 
and $L \subset Y$ be an oriented 3-submanifold. 
By Lemma \ref{cha_NP}, 
$C(Y)$ is a manifold with a torsion-free ${\rm Spin}(7)$-structure $\Phi$. 
Then $L \subset Y$ is associative if and only if 
$C(L) \subset C(Y)$ is Cayley. 
\end{lem}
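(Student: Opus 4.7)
The plan is to reduce both the Cayley condition for $C(L)$ and the associative condition for $L$ to a single pointwise scalar identity by evaluating the relevant forms on a carefully chosen oriented orthonormal frame. Fix a point $p \in L$ and an oriented $g$-orthonormal basis $(e_{1}, e_{2}, e_{3})$ of $T_{p}L$. At any point $(r,p) \in C(L)$, the tuple $(\partial_{r}, r^{-1}e_{1}, r^{-1}e_{2}, r^{-1}e_{3})$ is then an oriented $\bar{g}$-orthonormal basis of $T_{(r,p)}C(L)$ for the cone metric $\bar{g} = dr^{2} + r^{2}g$, so $\mathrm{vol}_{C(L)}$ applied to this tuple equals $1$.

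The next step is to plug this tuple into the explicit formula $\Phi = r^{3}\, dr \wedge \varphi + r^{4} * \varphi$ from Lemma \ref{cha_NP}. Since $\varphi$ and $*\varphi$ are pulled back from $Y$, both annihilate $\partial_{r}$; in particular, the $r^{4} * \varphi$ term contributes $0$, while the $r^{3}\, dr \wedge \varphi$ term contributes $r^{3} \cdot r^{-3}\, \varphi(e_{1}, e_{2}, e_{3}) = \varphi(e_{1}, e_{2}, e_{3})$. Hence $\Phi(\partial_{r}, r^{-1}e_{1}, r^{-1}e_{2}, r^{-1}e_{3}) = \varphi|_{T_{p}L}(e_{1}, e_{2}, e_{3})$.

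Since both $\Phi|_{TC(L)}$ and $\mathrm{vol}_{C(L)}$ are $4$-forms on the $4$-manifold $C(L)$, agreement on one oriented frame is equivalent to agreement everywhere on $T_{(r,p)}C(L)$. Therefore $C(L)$ is Cayley at $(r,p)$ iff $\varphi(e_{1},e_{2},e_{3}) = 1$, which by Lemma \ref{def_asso_coasso} is exactly the condition that $L$ be associative at $p$. Letting $p$ and $r$ vary yields the desired equivalence.

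The only real care required is bookkeeping: one must fix the convention that the induced orientation on $C(L)$ places the radial vector first (so that an oriented basis $(e_{1},e_{2},e_{3})$ of $TL$ gives the oriented basis $(\partial_{r},e_{1},e_{2},e_{3})$ of $TC(L)$), and one must remember that in the formula for $\Phi$ the symbols $\varphi$ and $*\varphi$ denote the pullbacks of the corresponding forms on $Y$ rather than Hodge duals taken on $C(Y)$. There is no analytic difficulty beyond this; the argument is entirely pointwise linear algebra and does not require reusing the nearly parallel equation $d\varphi = 4 * \varphi$ itself, which has already been absorbed into the construction of $\Phi$ in Lemma \ref{cha_NP}.
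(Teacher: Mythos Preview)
Your proof is correct: the pointwise computation on an oriented orthonormal frame is exactly the right way to see the equivalence, and all the bookkeeping (orientation convention on the cone, the fact that $\varphi$ and $*\varphi$ are pulled back from $Y$ and hence annihilate $\partial_{r}$) is handled carefully. The paper itself states this lemma without proof, treating it as an immediate consequence of the formula $\Phi = r^{3}\,dr \wedge \varphi + r^{4} *\varphi$ from Lemma~\ref{cha_NP}; your argument supplies precisely the routine verification the paper omits.
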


\begin{lem} \cite{Lotay3}
There are no coassociative submanifolds of a nearly parallel $G_{2}$-manifold $(Y, \varphi, g)$. 
\end{lem}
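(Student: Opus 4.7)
The plan is to derive a contradiction from the defining equations of a coassociative submanifold combined with the nearly parallel condition $d\varphi = 4*\varphi$. Suppose for contradiction that $L^{4} \subset Y$ is a coassociative submanifold. I would first invoke the alternative characterization from Lemma \ref{equiv condi asso coasso}, namely that coassociativity is equivalent to the vanishing of the pullback $\varphi|_{TL} = 0$.

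Next I would take the exterior derivative of this identity on $L$. Since pullback commutes with $d$, vanishing of $\varphi|_{L}$ forces $(d\varphi)|_{L} = d(\varphi|_{L}) = 0$. Substituting the nearly parallel equation $d\varphi = 4 *\varphi$ then yields $(*\varphi)|_{L} = 0$.

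On the other hand, the original definition (Lemma \ref{def_asso_coasso}) of coassociativity states $*\varphi|_{TL} = \mathrm{vol}_{L}$, which is nonzero pointwise. This contradicts the conclusion of the previous step, so no such $L$ can exist.

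The argument is almost entirely formal, so there is no real obstacle; the only point requiring any care is the appeal to both characterizations of coassociativity at once (the vanishing form $\varphi|_{TL} = 0$ to kill the derivative, and the nonvanishing form $*\varphi|_{TL} = \mathrm{vol}_{L}$ to produce the contradiction), which is why one needs the equivalence stated in Lemma \ref{equiv condi asso coasso}. No local analysis, deformation theory, or representation theory is required.
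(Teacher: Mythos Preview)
Your proof is correct and follows essentially the same argument as the paper: use $\varphi|_{TL}=0$, apply $d$ and the nearly parallel relation $d\varphi=4*\varphi$ to get $*\varphi|_{TL}=0$, contradicting $*\varphi|_{TL}=\mathrm{vol}_{L}$. The paper's version is just a one-line compression of what you wrote.
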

\begin{proof}
If $L$ is a coassociative submanifold, we have $\varphi|_{TL} = 0$, 
which implies that $4 {\rm vol}_{L} = 4 * \varphi|_{TL} = d \varphi|_{TL} = 0$. This is a contradiction. 
\end{proof}


\section{Canonical variation}

\subsection{Riemannian submersion}

We give a summary of Chapter 9 of \cite{Besse}.
Let $(M, g)$ and $(B, h)$ be Riemannian manifolds and 
suppose that there exists a Riemannian submersion
$\pi: (M, g) \rightarrow (B, h)$. 
Decompose the tangent bundle 
$TM = \mathcal{V} \oplus \mathcal{H}$, 
where 
a vertical distribution $\mathcal{V}$ is a vector subbundle 
tangent to the fibers $\pi: M \rightarrow B$, 
and a horizontal distribution $\mathcal{H}$ is 
the orthogonal complement bundle of $\mathcal{V}$. 
Denote by $\nabla$  
the Levi-Civita connection 
of $g$.

\begin{definition} \label{tensors A T}
Define (1,2)-tensors $A, T \in C^{\infty}(M, \otimes^{2} T^{*}M \otimes TM)$ by 
\begin{align*}
A_{E} F = (\nabla_{E^{\top}} F^{\perp})^{\top} + (\nabla_{E^{\top}} F^{\top})^{\perp}, \qquad
T_{E} F = (\nabla_{E^{\perp}} F^{\perp})^{\top} + (\nabla_{E^{\perp}} F^{\top})^{\perp}, 
\end{align*}
for $E, F \in \mathfrak{X} (M)$, 
where $\top : TM \rightarrow \mathcal{H}$ and $\perp : TM \rightarrow \mathcal{V}$ 
are projections. 
\end{definition}

\begin{rem}
The distribution $\mathcal{H}$ is involutive if and only if $A \equiv 0$. 
The fibers of $\pi: M \rightarrow B$ are totally geodesic if and only if $T \equiv 0$. 
\end{rem}

In the following, we suppose that \underline{$T \equiv 0$}.

\begin{lem}
Let $X, Y$ be the horizontal vector fields, 
$U, V$ be the vertical vector fields, 
and 
$E, F$ be any vector fields on $M$. 
We have 
\begin{align*}
&A_{U} X =0,  
&A_{U} V &= 0, 
&&A_{X} U = (\nabla_{X} U)^{\top}, \qquad
A_{X} Y = (\nabla_{X} Y)^{\perp}, \\
&A_{X} Y = -A_{Y} X, 
&A_{X} Y &= \frac{1}{2} [X, Y]^{\perp}, 
&&g(A_{X} E, F) = - g(E, A_{X} F). 
\end{align*}
which implies that 
\begin{align*}
&\nabla_{U} V = (\nabla_{U} V)^{\perp}, 
&\nabla_{U} X &= (\nabla_{U} X)^{\top}, \\
&\nabla_{X} U = (\nabla_{X} U)^{\perp} + A_{X} U,
&\nabla_{X} Y &= A_{X} Y + (\nabla_{X} Y)^{\top}. 
\end{align*}
\end{lem}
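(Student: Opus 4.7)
\medskip

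My plan is to prove all identities by unpacking Definition \ref{tensors A T} with the appropriate substitutions, using the assumption $T\equiv 0$ at precisely two places, and then establishing one genuinely nontrivial formula (the horizontal Lie-bracket identity) via Koszul.

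First, the four identities in the top row are almost immediate from the definition. For $A_U X$ and $A_U V$, observe that $U^{\top}=0$, so every summand in $A_E F = (\nabla_{E^{\top}}F^{\perp})^{\top} + (\nabla_{E^{\top}}F^{\top})^{\perp}$ vanishes. For $A_X U$, the term $(\nabla_X U^{\top})^{\perp}$ vanishes and only $(\nabla_X U^{\perp})^{\top}=(\nabla_X U)^{\top}$ survives. For $A_X Y$, the term $(\nabla_X Y^{\perp})^{\top}$ vanishes and only $(\nabla_X Y^{\top})^{\perp}=(\nabla_X Y)^{\perp}$ survives. The skew-symmetry $A_X Y = -A_Y X$ is then equivalent to the assertion $A_X Y = \tfrac12[X,Y]^{\perp}$, since $(\nabla_X Y)^{\perp}-(\nabla_Y X)^{\perp}=[X,Y]^{\perp}$, and once both relations are in hand the two combine to give the skew-symmetry for free.

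The main computation is therefore $(\nabla_X Y)^{\perp}=\tfrac12[X,Y]^{\perp}$. Since both sides are tensorial in $X,Y$ at a point, I may extend $X,Y$ at the given point to basic horizontal vector fields (horizontal lifts of fields on $B$). For such fields, $g(X,Y)$ is constant along fibers and the brackets $[X,V]$, $[Y,V]$ with any vertical $V$ are again vertical (because $\pi_{*}[X,V]=[\pi_{*}X,0]=0$). The Koszul formula then gives
\begin{equation*}
2g(\nabla_X Y, V)=X g(Y,V)+Y g(X,V)-V g(X,Y)+g([X,Y],V)-g([Y,V],X)+g([V,X],Y),
\end{equation*}
and every term on the right vanishes except $g([X,Y],V)$, yielding $2g((\nabla_X Y)^{\perp},V)=g([X,Y]^{\perp},V)$ for all vertical $V$. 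The skew-symmetry $g(A_X E,F)=-g(E,A_X F)$ then follows by checking the three cases (both horizontal, both vertical, mixed): in the first two the values of $A_X$ are orthogonal to the relevant components, while in the mixed case it reduces to the usual identity $g(\nabla_X Y,V)+g(Y,\nabla_X V)=X g(Y,V)=0$.

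Finally, the formulas in the second block are pure decompositions combined with the hypothesis $T\equiv 0$. Evaluating $T_U V = (\nabla_U V)^{\top}$ and $T_U X=(\nabla_U X)^{\perp}$ from the definition, the vanishing of $T$ yields $\nabla_U V=(\nabla_U V)^{\perp}$ and $\nabla_U X=(\nabla_U X)^{\top}$. The remaining two identities $\nabla_X U=(\nabla_X U)^{\perp}+A_X U$ and $\nabla_X Y=A_X Y+(\nabla_X Y)^{\top}$ are obtained by writing $\nabla_X U$ and $\nabla_X Y$ as the sum of their horizontal and vertical parts and substituting the formulas $A_X U=(\nabla_X U)^{\top}$ and $A_X Y=(\nabla_X Y)^{\perp}$ established in the first paragraph. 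The only real obstacle is the Koszul step, and that is only a technical point about reducing to basic vector fields; everything else is bookkeeping.
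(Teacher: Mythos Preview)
Your proof is correct. The paper does not actually prove this lemma: the section opens with ``We give a summary of Chapter 9 of \cite{Besse}'' and simply states the result as a citation of standard material, so there is no paper proof to compare against. Your argument is the standard one found in Besse --- unpacking the definition of $A$ on pure-type arguments, reducing the identity $A_X Y=\tfrac12[X,Y]^{\perp}$ to a Koszul computation with basic horizontal lifts, and invoking $T\equiv 0$ for the last block --- so nothing further needs to be said.
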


\subsection{Canonical Variation}

For $s, t > 0$, define the {\bf canonical variation} $\tilde{g}$ of the Riemannian 
metric $g$ on $M$ by 
\begin{align*}
\tilde{g}|_{\mathcal{V} \times \mathcal{V}} = s^{2} g|_{\mathcal{V} \times \mathcal{V}}, \qquad 
\tilde{g}|_{\mathcal{H} \times \mathcal{H}} = t^{2} g|_{\mathcal{H} \times \mathcal{H}}, \qquad
\tilde{g}|_{\mathcal{H} \times \mathcal{V}} = 0. 
\end{align*}

\begin{rem}
Usually, we set $t=1$ for simplicity. 
However, we introduce a parameter $t$ 
to define the nearly parallel $G_{2}$-structure. 
See Proposition \ref{can var}.
\end{rem}

Denote by $\tilde{\nabla}$ the Levi-Civita connection of $\tilde{g}$. 
Set (1,2)-tensors $\tilde{A}$ and $\tilde{T}$ as in Definition \ref{tensors A T}. 

\begin{rem}
The assumption $T \equiv 0$ implies that $\tilde{T} \equiv 0$ for all $s, t > 0$. 
\end{rem}

Under the canonical variation, 
the tensor $A$ in Definition \ref{tensors A T}  
and the Levi-Civita connection are changed as follows. 
\begin{lem} \label{relation of A and tildeA}
Let $X, Y$ be the horizontal vector fields, 
and $U, V$ be the vertical vector fields on $M$. 
We have 
\begin{align*}
&\tilde{A}_{X} Y = A_{X} Y,  & \tilde{A}_{X} U &= \frac{s^{2}}{t^{2}} A_{X} U, \\
&\tilde{\nabla}_{X} Y = \nabla_{X} Y, 
&\tilde{\nabla}_{U} V &= \nabla_{U} V, 
\end{align*}
\begin{align*}
\tilde{\nabla}_{X} U &= \frac{s^{2}}{t^{2}} (\nabla_{X} U)^{\top} + (\nabla_{X} U)^{\perp},\\
\tilde{\nabla}_{U} X &= \frac{s^{2}}{t^{2}} (\nabla_{U} X)^{\top} 
+ \left( 1 - \frac{s^{2}}{t^{2}} \right) [U, X]^{\top}.
\end{align*}
\end{lem}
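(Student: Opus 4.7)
The plan is to derive all the formulas from the Koszul formula applied to the canonical-variation metric $\tilde g$, and compare term by term with the Koszul formula for $g$. For every pair of horizontal/vertical vector fields $(E, F)$ I would compute $2\tilde g(\tilde\nabla_E F, G)$ as a sum of six derivative and bracket terms, substitute $\tilde g|_{\mathcal V\times\mathcal V} = s^2 g$, $\tilde g|_{\mathcal H\times\mathcal H} = t^2 g$, and $\tilde g|_{\mathcal V\times\mathcal H}=0$, and then read off the horizontal and vertical parts of $\tilde\nabla_E F$ by testing against horizontal $G$ (using $\tilde g = t^2 g$) and vertical $G$ (using $\tilde g = s^2 g$). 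Throughout I may freely use the $T\equiv 0$ identities from the preceding lemma, in particular $\nabla_U X = (\nabla_U X)^\top$, $\nabla_X U = (\nabla_X U)^\perp + A_X U$, $A_X Y = \frac{1}{2}[X,Y]^\perp$, and $g(A_X E, F) = -g(E, A_X F)$.

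The two ``pure'' cases are essentially automatic. For $\tilde\nabla_X Y$ with both $X, Y$ horizontal, every term of the Koszul formula picks up an overall factor $t^2$ except $\tilde g([X,Y], U) = s^2 g([X,Y]^\perp, U)$; but $[X,Y]^\perp$ is precisely $2 A_X Y$, so pairing with vertical $V$ gives $(\tilde\nabla_X Y)^\perp = A_X Y$, while pairing with horizontal $Z$ gives $(\tilde\nabla_X Y)^\top = (\nabla_X Y)^\top$, and hence $\tilde\nabla_X Y = \nabla_X Y$ and $\tilde A_X Y = A_X Y$. For $\tilde\nabla_U V$ with both vertical, the vertical part is unchanged because each fiber carries a globally rescaled metric $s^2 g|_{\mathcal V}$, and the horizontal part vanishes by the same Koszul bookkeeping together with the assumption $T\equiv 0$, which forces $\nabla_U V$ to be vertical in the first place.

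The work is concentrated in the mixed case $\tilde\nabla_X U$. Testing against a vertical $V$ gives $(\tilde\nabla_X U)^\perp = (\nabla_X U)^\perp$ straightforwardly. Testing against a horizontal $Y$, all terms of $2\tilde g(\tilde\nabla_X U, Y)$ scale by $t^2$ except the single bracket term $\tilde g([X,Y], U) = s^2 g([X,Y]^\perp, U)$. Rewriting $g([X,Y]^\perp, U) = 2 g(A_X Y, U) = -2 g(A_X U, Y)$ via the skew-symmetry of $A$, the resulting discrepancy $(t^2 - s^2)$ converts into an overall factor $s^2/t^2$ on the horizontal component, yielding $\tilde\nabla_X U = (s^2/t^2)(\nabla_X U)^\top + (\nabla_X U)^\perp$ and therefore $\tilde A_X U = (s^2/t^2) A_X U$.

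Finally, rather than redo Koszul for $\tilde\nabla_U X$, I would exploit torsion-freeness: $\tilde\nabla_U X = \tilde\nabla_X U + [U, X]$. Splitting $[U,X]$ into its horizontal and vertical parts, the vertical components cancel against $(\nabla_X U)^\perp$ because $(\nabla_U X)^\perp = 0$ forces $[U,X]^\perp = -(\nabla_X U)^\perp$, while the horizontal part reorganizes using $(\nabla_X U)^\top = (\nabla_U X)^\top - [U,X]^\top$ to give exactly the stated asymmetric formula $\tilde\nabla_U X = (s^2/t^2)(\nabla_U X)^\top + (1 - s^2/t^2)[U,X]^\top$. The main obstacle is purely bookkeeping: keeping straight which of the six Koszul terms scales by $s^2$ versus $t^2$, and using the $T\equiv 0$ identities to convert the resulting bracket terms into $A$-terms so that the scaling factor $s^2/t^2$ emerges cleanly.
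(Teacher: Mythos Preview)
Your proposal is correct. The paper does not actually supply a proof of this lemma: Section~3 is explicitly a summary of Chapter~9 of Besse, and the lemma is stated without argument, so your Koszul-formula derivation fills in exactly what the paper omits. Your bookkeeping is sound in each case, including the observation that the $t^2$-scaled terms in the vertical pairing for $\tilde\nabla_X Y$ sum to zero (which follows from $A_X Y = \tfrac12[X,Y]^\perp$ applied to the original Koszul identity), and the torsion-freeness shortcut for $\tilde\nabla_U X$ is a clean way to avoid repeating the computation.
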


This lemma implies the following useful equation.

\begin{lem} \label{relation of connection}
For $E_{1}, E_{2} \in \mathfrak{X}(M)$, we have 
\begin{align*} 
\tilde{\nabla}_{E_{1}} E_{2} - \nabla_{E_{1}} E_{2} = 
\left( -1+ \frac{s^{2}}{t^{2}} \right) (A_{E_{1}} E_{2}^{\perp} + A_{E_{2}} E_{1}^{\perp}).
\end{align*}
\end{lem}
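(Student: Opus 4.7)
The plan is to reduce to a case analysis by decomposing $E_1 = X_1 + U_1$ and $E_2 = X_2 + U_2$ into their horizontal and vertical components, and then to verify the identity on each of the four pairs $(X_1,X_2), (X_1,U_2), (U_1,X_2), (U_1,U_2)$ by direct appeal to Lemma \ref{relation of A and tildeA}. Both $\tilde{\nabla}$ and $\nabla$ are $\mathbb{R}$-bilinear (in fact, $C^\infty(M)$-linear in the first slot and a derivation in the second), and the right-hand side is manifestly $C^\infty(M)$-bilinear because $A$ is a tensor and $(\cdot)^\perp$ is fibrewise linear. Hence it suffices to check the equation for pure horizontal/vertical inputs.

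First I would dispose of the two ``diagonal'' cases. For $(X_1,X_2)$ both sides vanish: the connections agree by Lemma \ref{relation of A and tildeA}, and $X_i^\perp = 0$. For $(U_1,U_2)$ the connections also agree, and on the right $A_{U_1}U_2 + A_{U_2}U_1 = 0$ since $A_U \equiv 0$ on any argument.

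Next I would handle the cross term $(X,U)$: Lemma \ref{relation of A and tildeA} gives
\[
\tilde{\nabla}_{X}U - \nabla_{X}U = \Bigl(\tfrac{s^{2}}{t^{2}}-1\Bigr)(\nabla_{X}U)^{\top} = \Bigl(\tfrac{s^{2}}{t^{2}}-1\Bigr)A_{X}U,
\]
using the identity $(\nabla_{X}U)^{\top} = A_{X}U$ from the preceding lemma, and this matches the right-hand side since $U^{\perp}=U$ and $X^{\perp}=0$. The slightly trickier case is $(U,X)$, because Lemma \ref{relation of A and tildeA} expresses $\tilde{\nabla}_{U}X$ via the bracket $[U,X]^{\top}$ rather than via $A$ directly. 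The key small observation is
\[
[U,X]^{\top} = (\nabla_{U}X)^{\top} - (\nabla_{X}U)^{\top} = \nabla_{U}X - A_{X}U,
\]
since $\nabla_{U}X$ is already horizontal. Substituting this into the formula for $\tilde{\nabla}_{U}X$ and simplifying yields
\[
\tilde{\nabla}_{U}X - \nabla_{U}X = \Bigl(\tfrac{s^{2}}{t^{2}}-1\Bigr)A_{X}U,
\]
which again agrees with the right-hand side, using $U^{\perp}=U$, $X^{\perp}=0$, and $A_{U}\equiv 0$.

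Summing the four cases and invoking bilinearity gives the lemma. There is no serious obstacle here: the only spot demanding care is the last case, where one must convert the $[U,X]^{\top}$ term in the canonical-variation formula into an $A$-expression via the $T\equiv 0$ identities; once that bookkeeping is done, the result falls out immediately.
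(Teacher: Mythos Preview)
Your proof is correct and is precisely the argument the paper has in mind: the paper states this lemma as an immediate consequence of Lemma \ref{relation of A and tildeA} without writing out the details, and your horizontal/vertical case analysis is the natural way to fill them in. One small remark: your bilinearity justification would be cleaner if you noted explicitly that the difference $\tilde{\nabla}-\nabla$ of two connections is tensorial (hence $C^{\infty}(M)$-bilinear), which is what actually licenses the reduction to pure horizontal/vertical inputs.
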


\section{Nearly parallel $G_{2}$-structure on the squashed $S^{7}$}

The standard $S^{7}$ admits a canonical nearly parallel $G_{2}$-structure. 
By the canonical variation, we obtain the second nearly parallel $G_{2}$-structure 
on $S^{7}$ (Proposition \ref{can var}). 
First, we review a 3-Sasakian structure on $S^{7}$. 

\subsection{3-Sasakian structure on $S^{7}$}

Consider the following Lie groups: 
\begin{align*}
{\rm Sp}(1) &= \{ a_{1} + a_{2}j \in \mathbb{H}; a_{i} \in \mathbb{C}, |a_{1}|^{2} +  |a_{2}|^{2} = 1 \}, \\ 
{\rm Sp}(2) &= 
\{ g \in {\rm GL}(2, \mathbb{H}); g \mbox{ preserves the metric on } \mathbb{H}^{2}  \}\\ 
&=\{ g \in {\rm U}(4); {}^t\! gJg=J \} \\
&=  \{ \left(u,  J \overline{u}, v, J\overline{v} \right) ; 
u,v \in \mathbb{C}^{4}, |u| = |v| = 1, 
\langle v, u \rangle_{\mathbb{C}} = \langle v, J \overline{u} \rangle_{\mathbb{C}} = 0
\}, 
\end{align*}
where $J = 
\left(
\begin{array}{cc}
J' & 0 \\
0 & J' \\
\end{array} 
\right), 
J'= 
\left(
\begin{array}{cc}
0 & -1 \\
1 & 0 \\
\end{array} 
\right), 
$
and $\langle \cdot, \cdot \rangle_{\mathbb{C}}: \mathbb{C}^{4} \times \mathbb{C}^{4} \rightarrow \mathbb{C}$ 
is the standard Hermitian metric on $\mathbb{C}^{4}$.

Let ${\rm Sp}(1) \times {\rm Sp}(2)$ act on  $\mathbb{H}^{2}$ by 
\begin{align*} 
(q, A) \cdot (q_{1}, q_{2}) = q (q_{1}, q_{2})  {}^t\! \overline{A}, 
\end{align*}
where $(q, A) \in {\rm Sp}(1) \times {\rm Sp}(2), (q_{1}, q_{2}) \in \mathbb{H}^{2}$. 
Via the identification 
$\mathbb{C}^{4} \ni (z_{1}, \cdots, z_{4}) \mapsto (z_{1} + z_{2}j, z_{3} + z_{4}j) \in \mathbb{H}^{2}$, 
the ${\rm Sp}(1)$-action on $\mathbb{C}^{4}$ is described as 
\begin{align} \label{right sp1 action}
(a_{1} + a_{2}j) \cdot u = a_{1} u + a_{2} J \overline{u}, 
\end{align}
where $u \in \mathbb{C}^{4}$, 
and ${\rm Sp}(2) \subset {\rm U}(4)$ acts on $\mathbb{C}^{4}$ canonically. 
By definition, the ${\rm Sp}(1)$-action commutes with the ${\rm Sp}(2)$-action. 

The actions of $i, j, k \in {\rm Sp}(1)$ induce 
complex structures $I_{1}, I_{2}, I_{3}$ on $\mathbb{C}^{4}$, respectively, 
and hence induce the 3-Sasakian structure $\{ (\Phi_{i}, \xi_{i}, \eta_{i}, g) \}_{i =1,2,3}$ on $S^{7}$, 
where $g$ is the standard metric on $S^{7}$, 
and 
a vector field $\xi_{i} \in \mathfrak{X}(S^{7})$, 
a 1-form $\eta_{i} \in \Omega^{1}(S^{7})$, 
and a (1, 1)-tensor $\Phi_{i} \in C^{\infty}(S^{7}, {\rm End}(TS^{7}))$ are defined by 
\begin{align*}
(\xi_{i})_{z} &= - I_{i} (z), \mbox{ where } z \in \mathbb{C}^{4}, \\
\eta_{i} &= g(\xi_{i}, \cdot), \\
\Phi_{i} &= 
\left\{ \begin{array}{ll}
I_{i} & (\mbox{on } {\rm Ker} \eta_{i}) \\
0   & (\mbox{on } \mathbb{R} \xi_{i}). \\
\end{array} \right.
\end{align*}
Note that the following conditions are satisfied: 
\begin{align*}
\Phi_{i+2} &= \Phi_{i} \circ \Phi_{i+1} - \eta_{i+1} \otimes \xi_{i} 
= - \Phi_{i+1} \circ \Phi_{i} + \eta_{i} \otimes \xi_{i+1}, \\
\xi_{i+2}   &= \Phi_{i} (\xi_{i+1}) = - \Phi_{i+1} (\xi_{i}), \\
\eta_{i+2} &= \eta_{i} \circ \Phi_{i+1} = -\eta_{i+1} \circ \Phi_{i}, 
\end{align*}
where $i \in \mathbb{Z}/3$.
These tensors are described explicitly as follows.

\begin{lem} \label{xi eta S7}
\begin{align*}
\xi_{1} &= -i {}^t\!(z_{1}, z_{2}, z_{3}, z_{4}), \\
\xi_{2} &=  {}^t\!(\overline{z}_{2}, - \overline{z}_{1}, \overline{z}_{4}, -\overline{z}_{3}), \\ 
\xi_{3} &=  i {}^t\!(\overline{z}_{2}, - \overline{z}_{1}, \overline{z}_{4}, -\overline{z}_{3}), 
\end{align*}
\begin{align*}
\eta_{1} &= {\rm Im}\left( \sum_{j = 1}^{4} z_{j} d \overline{z}_{j} \right), \qquad
\eta_{2} + i \eta_{3} = -z_{1} d z_{2} + z_{2} dz_{1} -z_{3} d z_{4} + z_{4} dz_{3}, \\ 
d \eta_{1} &= -i \sum_{j = 1}^{4} dz_{j \overline{j}} = -2 g(\Phi_{1} (\cdot), \cdot), \qquad
d (\eta_{2} + i \eta_{3}) = -2 (dz_{12} + dz_{34}). 
\end{align*}
\end{lem}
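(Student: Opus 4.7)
The plan is to reduce everything to the explicit real coordinate expressions and then carry out direct computations, since every quantity in the statement is defined directly from the $\operatorname{Sp}(1)$-action on $\mathbb{C}^4=\mathbb{H}^2$ together with the Euclidean structure.

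First I would write down what the Killing vector fields have to be. Since $i\in\operatorname{Sp}(1)$ corresponds to $(a_1,a_2)=(i,0)$ in the parametrisation $q=a_1+a_2 j$, formula (\ref{right sp1 action}) gives $I_1(u)=iu$. Similarly $j\in\operatorname{Sp}(1)$ corresponds to $(0,1)$ and yields $I_2(u)=J\overline{u}$, and $k=ij\in\operatorname{Sp}(1)$ corresponds to $(0,i)$, giving $I_3(u)=iJ\overline{u}$. Substituting the explicit form of $J$ and using $\xi_i=-I_i(z)$ immediately produces the three formulas for $\xi_1,\xi_2,\xi_3$. This is the only place where the quaternion conventions enter, so it should be laid out carefully once.

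Next I would translate the $\xi_i$ from complex to real vector fields via $z_j=x_{2j-2}+ix_{2j-1}$ and dualise with the standard Euclidean metric $g$ to read off $\eta_i=g(\xi_i,\cdot)$. For $\eta_1$, a one-line expansion gives $\eta_1=\sum_j(x_{2j-1}dx_{2j-2}-x_{2j-2}dx_{2j-1})$, which coincides with $\operatorname{Im}\bigl(\sum_j z_j\,d\overline{z}_j\bigr)$. For the complex combination $\eta_2+i\eta_3$, I would expand $-z_1\,dz_2+z_2\,dz_1-z_3\,dz_4+z_4\,dz_3$ into real coordinates and compare the real and imaginary parts with the real-coordinate expressions for $\eta_2$ and $\eta_3$ obtained from $\xi_2,\xi_3$; the two sides agree term by term.

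For the exterior derivatives I would use the $(1,0)/(0,1)$ notation directly: differentiating $\eta_1=\operatorname{Im}\sum z_j\,d\overline z_j$ gives $d\eta_1=\operatorname{Im}\sum dz_j\wedge d\overline z_j$, and since $dz_j\wedge d\overline z_j=-2i\,dx_{2j-2}\wedge dx_{2j-1}$ one gets both $d\eta_1=-i\sum dz_{j\overline j}$ and a real formula in $dx_{ab}$. The identity $d\eta_1=-2g(\Phi_1(\cdot),\cdot)$ then follows because on each $\mathbb{C}$-plane $\operatorname{Span}_\mathbb{R}\{\partial_{2j-2},\partial_{2j-1}\}$ the (1,1)-form $-i\,dz_{j\overline j}$ is exactly $-2g(I_1(\cdot),\cdot)$, and after restriction to $S^7$ the radial and $\xi_1$-components contribute consistently with the definition $\Phi_1=I_1$ on $\ker\eta_1$ and $\Phi_1(\xi_1)=0$. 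Differentiating $\eta_2+i\eta_3$ produces the clean expression $-2(dz_{12}+dz_{34})$ by a one-line expansion.

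The main obstacle is purely bookkeeping: the identification $\mathbb{C}^4\cong\mathbb{H}^2$ together with the right-action convention in (\ref{right sp1 action}) forces particular signs in $J\overline u$, and these have to be propagated consistently into $\xi_2,\xi_3,\eta_2,\eta_3$ and their derivatives. I would therefore fix the real coordinate conventions up front, verify the $\xi_1,\eta_1,d\eta_1$ block (which is purely in the standard K\"ahler framework on $\mathbb{C}^4$), and then treat the $\eta_2+i\eta_3$ block by one single complex expansion rather than by separate real computations.
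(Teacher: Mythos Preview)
Your proposal is correct and is exactly the intended verification: the paper states this lemma without proof, treating it as an immediate unwinding of the definitions of $I_i$, $\xi_i=-I_i(z)$, and $\eta_i=g(\xi_i,\cdot)$, which is precisely what you outline.
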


\subsection{Second nearly parallel $G_{2}$-structure on $S^{7}$}

Applying the canonical variation to a Riemannian submersion $\pi : S^{7} \rightarrow S^{4} = \mathbb{H}P^{1}$, 
we obtain the second nearly parallel $G_{2}$-structure 
$(\tilde{\varphi}, \tilde{g})$ on $S^{7}$. 
Denote by $\omega_{i} = \frac{1}{2} D \eta_{i} 
= \frac{1}{2} d \eta_{i} ((\cdot)^{\top}, (\cdot)^{\top}) \in \Omega^{2}(S^{7})$ 
the covariant differentiation of $\frac{1}{2} \eta_{i}$,
where $\top: TS^{7} \rightarrow \mathcal{H}$ is a canonical projection.  
In other words, we have 
\begin{align*}
\omega_{1} = \frac{1}{2} d \eta_{1} + \eta_{23}, \qquad 
\omega_{2} = \frac{1}{2} d \eta_{2} + \eta_{31}, \qquad 
\omega_{3} = \frac{1}{2} d \eta_{3} + \eta_{12}.   
\end{align*}
since $[\xi_{i}, \xi_{i+1}] = 2 \xi_{i+2}$ for $i \in \mathbb{Z}/3.$
On the other hand, 
it is well-known that $\frac{1}{2} d \eta_{i} = - g(\Phi_{i}(\cdot), \cdot)$. 
For example,  see Section 2 of \cite{Ohnita_def}.
Then we deduce that 
\begin{align*}
\omega_{i}= -g(\Phi_{i}(\cdot)^{\top}, (\cdot)^{\top}) \qquad
\mbox{ for } i = 1,2,3.
\end{align*}

\begin{rem} \label{omega ptwise}
Take any unit vector $X_{0} \in \mathcal{H}$ and set $X_{i} = \Phi_{i}(X_{0})$ for $i = 1, 2, 3$.
Denote by $\{ X^{i} \}$ the dual of $\{ X_{i} \}$. Then we have 
\begin{align*}
\omega_{1} = -(X^{01} + X^{23}), \qquad
\omega_{2} = -(X^{02} + X^{31}), \qquad
\omega_{3} = -(X^{03} + X^{12}). 
\end{align*}
\end{rem}

\begin{prop} \cite{FKMS} \label{can var}
Define the Riemannian metric $\tilde{g}$, 
a 3-form $\tilde{\varphi} \in \Omega^{3}(S^{7})$, and the 4-form $* \tilde{\varphi} \in \Omega^{4}(S^{7})$
on $S^{7}$ by 
\begin{align*}
\tilde{g}|_{\mathcal{V} \times \mathcal{V}} 
= \left( \frac{3}{5} \right)^{2} g|_{\mathcal{V} \times \mathcal{V}}, \qquad
\tilde{g}|_{\mathcal{H} \times \mathcal{H}} = \left( \frac{3}{\sqrt{5}} \right)^{2} g|_{\mathcal{H} \times \mathcal{H}}, \qquad
\tilde{g}|_{\mathcal{H} \times \mathcal{V}} = 0, 
\end{align*}
\begin{align*}
\tilde{\varphi} &= \frac{27}{25} \left( \frac{1}{5} \eta_{123} + \sum_{i = 1}^{3} \eta_{i} \wedge \omega_{i} \right), \\
* \tilde{\varphi} &= \frac{27}{25} \left( \frac{1}{2} \sum_{i = 1}^{3} \omega_{i}^{2} 
+ \frac{3}{5} \left( \eta_{23} \wedge \omega_{1} + \eta_{31} \wedge \omega_{2} 
+ \eta_{12} \wedge \omega_{3} \right) \right). 
\end{align*}
Then $(\tilde{\varphi}, \tilde{g})$ is a nearly parallel $G_{2}$-structure with 
${\rm Hol}(\overline{\tilde{g}}) = {\rm Spin}(7)$ 
and $* \tilde{\varphi}$ is a Hodge dual of $\tilde{\varphi}$ with respect to $\tilde{g}$. 
We call $(S^{7}, \tilde{\varphi}, \tilde{g})$ the {\bf squashed $S^{7}$}. 
\end{prop}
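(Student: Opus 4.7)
My plan is to verify everything pointwise in an adapted frame, then check the nearly parallel equation $d\tilde\varphi = 4*\tilde\varphi$ by direct computation with the 3-Sasakian structure equations, and finally invoke Lemma~\ref{cha_NP} together with \cite{FKMS} for the holonomy statement.

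For the pointwise part I would follow Remark~\ref{omega ptwise}: at each $p\in S^{7}$, pick a unit horizontal vector $X_{0}\in\mathcal{H}_{p}$ and set $X_{i} = \Phi_{i}(X_{0})$ for $i=1,2,3$, obtaining a $g$-orthonormal basis $\{X_{0},X_{1},X_{2},X_{3},\xi_{1},\xi_{2},\xi_{3}\}$ of $T_{p}S^{7}$. Rescaling by $s=3/5$ on the vertical and $t=3/\sqrt{5}$ on the horizontal part gives a $\tilde g$-orthonormal basis with dual coframe $e^{i}=s\eta_{i}|_{p}$ and $f^{j}=tX^{j}$. Substituting $\eta_{i}=s^{-1}e^{i}$ and $\omega_{i}=-t^{-2}(f^{0i}+f^{jk})$ into the stated formula, the coefficients $\tfrac{27}{25}$ and $\tfrac{1}{5}$ are tuned so that $s^{-3}\cdot\tfrac{27}{25}\cdot\tfrac{1}{5}=1$ and $(st^{2})^{-1}\cdot\tfrac{27}{25}=1$, whence $\tilde\varphi|_{p}$ reduces to $\varphi_{0}$ after an appropriate relabeling and reorientation of the coordinates in Definition~\ref{def on R7}. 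This simultaneously shows that $\tilde\varphi$ is a $G_{2}$-structure inducing $\tilde g$ and, by applying the pointwise Hodge star to $\varphi_{0}$, that $*\tilde\varphi$ is its Hodge dual as claimed.

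The main task is then verifying $d\tilde\varphi=4*\tilde\varphi$. From Lemma~\ref{xi eta S7} combined with $\omega_{i}=\tfrac{1}{2}d\eta_{i}+\eta_{jk}$, one has $d\eta_{i}=2\omega_{i}-2\eta_{jk}$ cyclically, and differentiating once more yields $d\omega_{i}$ again as a combination of $\omega_{\bullet}\wedge\eta_{\bullet}$'s. Substituting into
\[
d\tilde\varphi = \tfrac{27}{25}\Bigl(\tfrac{1}{5}\,d\eta_{123}+\textstyle\sum_{i}d\eta_{i}\wedge\omega_{i}-\sum_{i}\eta_{i}\wedge d\omega_{i}\Bigr)
\]
and exploiting the pointwise identities $\omega_{i}\wedge\omega_{j}=0$ for $i\neq j$ and $\omega_{i}^{2}=\omega_{j}^{2}$ (both immediate from Remark~\ref{omega ptwise}), I expect the expression to collapse to a linear combination of $\sum_{i}\omega_{i}^{2}$ and $\sum_{\mathrm{cyc}}\eta_{jk}\wedge\omega_{i}$ with exactly the coefficients $4\cdot\tfrac{27}{25}\cdot\tfrac{1}{2}$ and $4\cdot\tfrac{27}{25}\cdot\tfrac{3}{5}$ appearing in $4*\tilde\varphi$. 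The hard part will be the bookkeeping of signs and constants; the specific values $s=3/5,\ t=3/\sqrt{5}$ are precisely those that force the right-hand side to be $4$ times (rather than some other multiple of) $*\tilde\varphi$, and any deviation from these values would produce a genuine torsion term.

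Once $(\tilde\varphi,\tilde g)$ is established as a nearly parallel $G_{2}$-structure, Lemma~\ref{cha_NP} automatically produces a torsion-free ${\rm Spin}(7)$-structure on the cone, giving ${\rm Hol}(\overline{\tilde g})\subset{\rm Spin}(7)$. That this inclusion is an equality, rather than a reduction to ${\rm SU}(4)$ or ${\rm Sp}(2)$, is the content of \cite{FKMS}; a self-contained argument would observe that the 3-Sasakian cone metric (hyperk\"ahler, holonomy ${\rm Sp}(2)$) corresponds to the symmetric point $s=t$, so the asymmetric choice $s/t=1/\sqrt{5}\neq 1$ breaks the K\"ahler condition on the cone and thereby rules out any reduction to ${\rm SU}(4)$ or smaller.
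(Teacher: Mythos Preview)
Your proposal is correct and follows essentially the same route as the paper: a direct computation of $d\tilde\varphi$ using the 3-Sasakian structure equations, matched against $*\tilde\varphi$, with the holonomy statement deferred to \cite{FKMS}. The only organizational difference is that the paper leaves $s,t$ as free parameters, writes $\tilde\varphi = G_{1}+G_{2}$ with $G_{1}=s^{3}\eta_{123}$ and $G_{2}=st^{2}\sum\eta_{i}\wedge\omega_{i}$, computes $*G_{j}$ and $dG_{j}$ separately, and then \emph{solves} $d\tilde\varphi = 4*\tilde\varphi$ for $(s,t)=(3/5,\,3/\sqrt{5})$ rather than verifying these specific values; your explicit pointwise check that $\tilde\varphi$ is a $G_{2}$-structure inducing $\tilde g$ is a detail the paper leaves implicit.
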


\begin{proof}[Outline of the proof]
Set 
\begin{align*}
\tilde{g}|_{\mathcal{V} \times \mathcal{V}} = s^{2} g|_{\mathcal{V} \times \mathcal{V}}, \qquad
\tilde{g}|_{\mathcal{H} \times \mathcal{H}} = t^{2} g|_{\mathcal{H} \times \mathcal{H}}, \qquad
\tilde{g}|_{\mathcal{H} \times \mathcal{V}} = 0, 
\end{align*}
\begin{align*}
\tilde{\varphi} &= s^{3} \eta_{123} + s t^{2} \sum_{i = 1}^{3} \eta_{i} \wedge \omega_{i}, 
\end{align*}
for $s, t > 0$. We find $s, t >0$ satisfying $d \tilde{\varphi} = 4 * \tilde{\varphi}$. 
Setting  
$G_{1} = s^{3} \eta_{123}, G_{2} = s t^{2} \sum_{i = 1}^{3} \eta_{i} \wedge \omega_{i}$, we have  
\begin{align*}
* G_{1} = \frac{t^{4}}{6} \sum_{i = 1}^{3} \omega_{i}^{2}, \qquad
* G_{2} = s^{2} t^{2} (\eta_{23} \wedge \omega_{1} + \eta_{31} \wedge \omega_{2} 
+ \eta_{12} \wedge \omega_{3}), \\ 
d(\eta_{123}) = \frac{2}{s^{2} t^{2}} *G_{2}, \qquad
d \left( \sum_{i = 1}^{3} \eta_{i} \wedge \omega_{i} \right) = 
\frac{12}{t^{4}} *G_{1} + \frac{2}{s^{2} t^{2}} * G_{2}. 
\end{align*}
Then we see that 
$d \tilde{\varphi} = \frac{12}{s} *G_{1} + (\frac{2s}{t^{2}} + \frac{2}{s}) * G_{2}$, 
and hence $d \tilde{\varphi} = 4* \tilde{\varphi}$ is equivalent to 
$s = 3/5, t = 3/ \sqrt{5}$. 
The metric $\tilde{g}$ is not Sasaki-Einstein, 
and hence satisfies ${\rm Hol}(\overline{\tilde{g}}) = {\rm Spin}(7)$ 
by the classification of the dimensions of the spaces of real Killing spinors. 
\end{proof}

\begin{rem} \cite{FKMS}
Proposition \ref{can var} is valid for any compact 3-Sasakian manifolds. 
The metric $\tilde{g}$ is Einstein if and only if 
$s=t$ or $s= t/ \sqrt{5}$. 
\end{rem}

Since 
$
\eta_{1} = {\rm Im}({}^t\! z d \overline{z}), 
\eta_{2} + i \eta_{3} = -d {}^t\! z \cdot Jz,
$
where $z = {}^t\! (z_{1}, z_{2}, z_{3}, z_{4})$, 
${\rm Sp}(2)$ preserves $\eta_{j} (j=1,2,3)$. 
For $q=a_{1}+a_{2}j \in {\rm Sp}(1)$, 
we have 
$
(q^{*} \eta_{1}, q^{*} \eta_{2}, q^{*} \eta_{3}) = (\eta_{1}, \eta_{2}, \eta_{3}) {}^t\! M_{q},
$
where $M_{q} \in {\rm SO}(3)$ is described as 
\begin{align*}
M_{q}
=
\left(
\begin{array}{ccc}
|a_{1}|^{2} - |a_{2}|^{2} & 2 {\rm Im}(a_{1} \overline{a}_{2}) & 2 {\rm Re} (a_{1} \overline{a}_{2})\\
2 {\rm Im}(a_{1} a_{2})& {\rm Re}(a_{1}^{2} + a_{2}^{2})     & {\rm Im}(-a_{1}^{2} + a_{2}^{2}) \\
-2 {\rm Re}(a_{1} a_{2})&{\rm Im}(a_{1}^{2} + a_{2}^{2})     &{\rm Re}(a_{1}^{2} - a_{2}^{2}) \\
\end{array} 
\right).
\end{align*}
Hence we see that 
${\rm Sp}(2)$ and ${\rm Sp}(1)$ preserve 
$g |_{\mathcal{H} \times \mathcal{H}}, g |_{\mathcal{V} \times \mathcal{V}}, \tilde{g}$ and $\tilde{\varphi}$. 
In fact, we have the following. 

\begin{lem} \cite{FKMS} \label{auto sq S7}
The automorphism group of the squashed $(S^{7}, \tilde{\varphi}, \tilde{g})$ is 
${\rm Sp}(1) {\rm Sp}(2) = {\rm Sp}(1) \times {\rm Sp}(2)/ \{ \pm (1, 1) \}$. 
\end{lem}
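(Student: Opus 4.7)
The plan is to establish the two inclusions ${\rm Sp}(1){\rm Sp}(2) \subseteq {\rm Aut}(S^{7}, \tilde{\varphi}, \tilde{g})$ and ${\rm Aut}(S^{7}, \tilde{\varphi}, \tilde{g}) \subseteq {\rm Sp}(1){\rm Sp}(2)$ separately. The forward inclusion together with the identification of the kernel is essentially accomplished by the computation preceding the lemma; the reverse inclusion is the substantive step.

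For the forward inclusion, I would use the formulas $\eta_{1} = {\rm Im}({}^t\! z\, d\overline{z})$ and $\eta_{2} + i\eta_{3} = -d{}^t\! z \cdot Jz$ from Lemma \ref{xi eta S7} to see that ${\rm Sp}(2)$ fixes each $\eta_{i}$, since it preserves both $\langle\cdot,\cdot\rangle_{\mathbb{C}}$ and $J$. For $q \in {\rm Sp}(1)$, the triple $(\eta_{1},\eta_{2},\eta_{3})$ is rotated by $M_{q} \in {\rm SO}(3)$ as displayed above the lemma. Since $\omega_{i} = \frac{1}{2} d\eta_{i} + \eta_{jk}$ is built naturally from the $\eta_{i}$'s, the triple $(\omega_{1},\omega_{2},\omega_{3})$ transforms by the same $M_{q}$, so the ${\rm SO}(3)$-scalar combinations $\eta_{123}$ and $\sum_{i} \eta_{i} \wedge \omega_{i}$ defining $\tilde{\varphi}$ are preserved; preservation of $\tilde{g}$ follows from the invariance of $g|_{\mathcal{V}\times\mathcal{V}}$, $g|_{\mathcal{H}\times\mathcal{H}}$, and of the splitting $TS^{7} = \mathcal{V} \oplus \mathcal{H}$. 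For the kernel, if $(q, A)$ with $q = a_{1} + a_{2}j$ acts trivially on $\mathbb{C}^{4}$, then by (\ref{right sp1 action}) the equation $a_{1} A z + a_{2} J \overline{Az} = z$ holds for all $z$; separating $\mathbb{C}$-linear from conjugate-linear parts forces $a_{2} = 0$ and $A = a_{1}^{-1}\, I$, and combined with $A \in {\rm Sp}(2) \subset {\rm U}(4)$ this yields $(q, A) = \pm(1, 1)$.

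For the reverse inclusion, any $f \in {\rm Aut}(S^{7}, \tilde{\varphi}, \tilde{g})$ is in particular an isometry of $\tilde{g}$, so it suffices to show that the isometry group of the squashed metric coincides with ${\rm Sp}(1){\rm Sp}(2)$. The cleanest route is to invoke the determination of this isometry group in \cite{FKMS}, which together with the forward inclusion closes the argument.

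I expect the reverse inclusion to be the main obstacle if one wants a self-contained proof. A direct approach would be to characterize the vertical distribution $\mathcal{V}$ spanned by $\xi_{1},\xi_{2},\xi_{3}$ intrinsically from $\tilde{g}$, for instance through curvature invariants distinguishing $\mathcal{V}$ from $\mathcal{H}$ (the canonical variation with $s \ne t$ makes these subbundles geometrically distinguishable via sectional curvatures), then show that any isometry preserves this splitting, descend to an isometry of the base $S^{4}$, and finally reconstruct the element of ${\rm Sp}(1){\rm Sp}(2)$ from the quotient datum together with the fiberwise ${\rm Sp}(1)$-action. This curvature-based bookkeeping would be the bulk of the work in a self-contained treatment.
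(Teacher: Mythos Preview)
Your proposal matches the paper's treatment: the paper does not supply its own proof of this lemma but simply cites \cite{FKMS}, after recording the computation (immediately preceding the lemma) that ${\rm Sp}(2)$ fixes each $\eta_j$ and ${\rm Sp}(1)$ rotates them by $M_q \in {\rm SO}(3)$, yielding the forward inclusion. Your forward-inclusion argument and kernel computation spell out exactly this, and your invocation of \cite{FKMS} for the reverse inclusion is precisely what the paper does.
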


\begin{rem}
In this paper, we often consider the subgroup of ${\rm Sp}(1) {\rm Sp}(2)$. 
If there may be some confusion, 
denoting ${\rm Sp}(1) = {\rm Sp}(1)_{L}$ and ${\rm Sp}(2) = {\rm Sp}(2)_{R}$, 
we distinguish subgroups of ${\rm Sp}(1) {\rm Sp}(2) = {\rm Sp}(1)_{L} {\rm Sp}(2)_{R}$.
\end{rem}

\begin{lem} \label{relation of connection 3-sasaki}
For any $E_{1}, E_{2} \in \mathfrak{X}(S^{7})$, we have 
\begin{align*}
\tilde{g} (E_{1}, E_{2}) &= 
- \frac{36}{25} \sum_{j=1}^{3} \eta_{j} (E_{1}) \eta_{j} (E_{2})
+ \frac{9}{5} g(E_{1}, E_{2}), \\
\tilde{\nabla}_{E_{1}} E_{2} - \nabla_{E_{1}} E_{2} &= 
\frac{4}{5} \Theta (E_{1}, E_{2}),
\end{align*}
where 
$\Theta \in C^{\infty}(S^{7}, \otimes^{2} T^{*}S^{7} )$ is defined by 
\begin{align*}
\Theta (E_{1}, E_{2}) = 
\sum_{i=1}^{3} \left( \eta_{i} (E_{1}) \Phi_{i} (E_{2}) + \eta_{i} (E_{2}) \Phi_{i} (E_{1}) \right).
\end{align*}
\end{lem}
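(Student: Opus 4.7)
The plan is to reduce both identities to the Riemannian submersion formalism of Section 3 applied to $\pi\colon S^{7} \to S^{4} = \mathbb{H}P^{1}$ with the parameters $s = 3/5$, $t = 3/\sqrt{5}$ specified in Proposition \ref{can var}. Since $\xi_{1}, \xi_{2}, \xi_{3}$ are orthonormal with respect to $g$ and span the vertical distribution $\mathcal{V}$, the vertical projection of any $E \in \mathfrak{X}(S^{7})$ is $E^{\perp} = \sum_{i=1}^{3} \eta_{i}(E)\,\xi_{i}$, which is the identity I would use throughout.

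For the first identity, I would simply expand
\[
\tilde{g}(E_{1},E_{2}) = \tfrac{9}{25}\,g(E_{1}^{\perp},E_{2}^{\perp}) + \tfrac{9}{5}\,g(E_{1}^{\top},E_{2}^{\top}),
\]
substitute $g(E_{1}^{\top},E_{2}^{\top}) = g(E_{1},E_{2}) - g(E_{1}^{\perp},E_{2}^{\perp})$, and replace $g(E_{1}^{\perp},E_{2}^{\perp})$ by $\sum_{i}\eta_{i}(E_{1})\eta_{i}(E_{2})$. The coefficient $\tfrac{9}{25} - \tfrac{9}{5} = -\tfrac{36}{25}$ then yields the stated formula.

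For the connection identity, Lemma \ref{relation of connection} with $s^{2}/t^{2} = 1/5$ reduces the claim to
\[
\tilde{\nabla}_{E_{1}}E_{2} - \nabla_{E_{1}}E_{2} = -\tfrac{4}{5}\bigl(A_{E_{1}}E_{2}^{\perp} + A_{E_{2}}E_{1}^{\perp}\bigr),
\]
so it suffices to prove $A_{E_{1}}E_{2}^{\perp} + A_{E_{2}}E_{1}^{\perp} = -\Theta(E_{1},E_{2})$. I would expand $A_{E_{1}}E_{2}^{\perp} = (\nabla_{E_{1}^{\top}}E_{2}^{\perp})^{\top}$ via $E_{2}^{\perp} = \sum_{i}\eta_{i}(E_{2})\xi_{i}$ and invoke the Sasakian identity $\nabla_{X}\xi_{i} = -\Phi_{i}(X)$ for horizontal $X$, which is equivalent to the relation $\tfrac{1}{2}d\eta_{i} = -g(\Phi_{i}(\cdot),\cdot)$ recorded before Remark \ref{omega ptwise}. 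Because $\Phi_{i}$ preserves $\mathcal{H}$, this yields $A_{E_{1}}E_{2}^{\perp} = -\sum_{i}\eta_{i}(E_{2})\,\Phi_{i}(E_{1}^{\top})$ and symmetrically for $A_{E_{2}}E_{1}^{\perp}$.

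The one subtle point, and what I regard as the main obstacle, is that $\Theta$ is written in terms of $\Phi_{i}(E_{j})$ while the computation produces $\Phi_{i}(E_{j}^{\top})$. The discrepancy is $\sum_{i}\eta_{i}(E_{k})\Phi_{i}(E_{j}^{\perp}) = \sum_{i,l}\eta_{i}(E_{k})\eta_{l}(E_{j})\Phi_{i}(\xi_{l})$, and the 3-Sasakian cross relations $\Phi_{i}(\xi_{i}) = 0$, $\Phi_{i}(\xi_{i+1}) = \xi_{i+2}$, $\Phi_{i}(\xi_{i+2}) = -\xi_{i+1}$ render the nonzero terms antisymmetric in the index pair $(i,l)$; after symmetrization in $(E_{1},E_{2})$ the extra contributions cancel in pairs, so $\Phi_{i}(E_{j}^{\top})$ can be replaced by $\Phi_{i}(E_{j})$ in the symmetric sum. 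Assembling the pieces then gives $\tfrac{4}{5}\,\Theta(E_{1},E_{2})$ as claimed.
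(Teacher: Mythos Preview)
Your proposal is correct and follows essentially the same route as the paper: both apply Lemma~\ref{relation of connection} with $(s,t)=(3/5,3/\sqrt{5})$, use $A_{X}U=-\sum_{i}\eta_{i}(U)\Phi_{i}(X)$ for horizontal $X$ and vertical $U$, and then pass from $\Phi_{i}(E_{j}^{\top})$ to $\Phi_{i}(E_{j})$ in the symmetric sum. You actually spell out this last step (via the antisymmetry $\Phi_{i}(\xi_{l})=-\Phi_{l}(\xi_{i})$ paired against the symmetric coefficient $\eta_{i}(E_{1})\eta_{l}(E_{2})+\eta_{i}(E_{2})\eta_{l}(E_{1})$), whereas the paper simply asserts it; otherwise the arguments coincide.
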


\begin{proof}
The first equation is proved easily and we omit the proof.
Set $(s, t) = \left( 3/5, 3/ \sqrt{5} \right)$ in Lemma \ref{relation of connection}. 
Since $A_{X} U = - \sum_{i=1}^{3} \eta_{i}(U) \Phi_{i} (X)$ 
for a horizontal vector $X$ and a vertical vector $U$,  
we have 
\begin{align*}
\tilde{\nabla}_{E_{1}} E_{2} - \nabla_{E_{1}} E_{2} = 
\frac{4}{5} 
\sum_{i=1}^{3} \left( \eta_{i} (E_{1}) \Phi_{i} (E_{2}^{\top}) + \eta_{i} (E_{2}) \Phi_{i} (E_{1}^{\top}) \right).
\end{align*}
We easily see that 
the right hand side is equal to $\frac{4}{5} \Theta (E_{1}, E_{2})$. 
\end{proof}


\subsection{Associative submanifolds of the squashed $S^{7}$}

By the definition of $\tilde{\varphi}$ in Proposition \ref{can var}, we see the following. 
\begin{rem}
There are no horizontal associative submanifolds, 
i.e. 
there are no associative submanifolds whose 
tangent spaces are contained in $\mathcal{H}$. 
\end{rem}

Let 
$\pi : S^{7} \rightarrow S^{4}$ and 
$p_{1}: S^{7} \rightarrow \mathbb{C}P^{3}$ 
be the Hopf fibrations and 
$p_{2}: \mathbb{C}P^{3} \rightarrow S^{4}$ 
be the twistor fibration satisfying $\pi = p_{2} \circ p_{1}$. 
Denote by $\underline{\mathcal{V}}$ and $\underline{\mathcal{H}}$ 
the distributions of $\mathbb{C}P^{3}$ induced by $\mathcal{V}$ and $\mathcal{H}$, respectively. 
In other words, $\underline{\mathcal{V}}$ is a vector subbundle of $T \mathbb{C} P^{3}$ 
tangent to the fibers $p_{2}$, and $\underline{\mathcal{H}}$ is 
the orthogonal complement bundle of $\underline{\mathcal{V}}$. 
By an abuse of notation,
denote by $I_{1}$ the standard complex structure 
on $\mathbb{C}P^{3}$ induced from 
the standard complex structure $I_{1}$ on $\mathbb{C}^{4}$. 
Define the almost complex structure $I_{1}^{'}$ on $\mathbb{C}P^{3}$ by 
\begin{align} \label{almcpxstr CP3}
I_{1}^{'}|_{\underline{\mathcal{V}}} = -I_{1}|_{\underline{\mathcal{V}}}, \qquad
I_{1}^{'}|_{\underline{\mathcal{H}}} = I_{1}|_{\underline{\mathcal{H}}}. 
\end{align}
The almost complex structure $I_{1}^{'}$ is never integrable, and defines 
the nearly K\"{a}hler structure on $\mathbb{C}P^{3}$.

\begin{prop} \label{example asso psehol}
Let $\Sigma \subset \mathbb{C}P^{3}$ be an $I_{1}^{'}$-holomorphic curve. 
Then the Hopf lift $p_{1}^{-1}(\Sigma) \subset S^{7}$ of $\Sigma$ is associative in the squashed $S^{7}$. 
\end{prop}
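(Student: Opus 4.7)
The plan is to verify associativity by evaluating $\tilde{\varphi}$ on an adapted basis of $T_z p_1^{-1}(\Sigma)$ and comparing it against the induced volume form. First I fix $z \in p_1^{-1}(\Sigma)$ and set $\underline{z} = p_1(z)$. Since the fibre of $p_1$ is generated by $\xi_1$, the tangent space decomposes as $T_z p_1^{-1}(\Sigma) = \mathbb{R}\xi_1 \oplus H$, where $H$ is the $p_1$-horizontal lift of $T_{\underline{z}}\Sigma$. Under $dp_1$, $\mathrm{span}(\xi_2,\xi_3)$ projects isomorphically onto $\underline{\mathcal{V}}$ and $\mathcal{H}$ onto $\underline{\mathcal{H}}$, while the complex structure $I_1$ on $\mathbb{C}P^3$ lifts to the restriction of multiplication by $i$ on $\mathbb{C}^4$, sending $\xi_2$ to $\xi_3$ and acting as $\Phi_1$ on $\mathcal{H}$.

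I will then exploit the $I_1'$-holomorphicity of $\Sigma$ to build a good basis. Pick $0 \neq u \in T_{\underline{z}}\Sigma$ and decompose $u = u_v + u_h$ according to $\underline{\mathcal{V}} \oplus \underline{\mathcal{H}}$; by (\ref{almcpxstr CP3}), $I_1'(u) = -I_1(u_v) + I_1(u_h)$, which again lies in $T_{\underline{z}}\Sigma$. Writing $X_v = a\xi_2 + b\xi_3 \in \mathrm{span}(\xi_2,\xi_3)$ and $X_h \in \mathcal{H}$ for the $p_1$-horizontal lifts of $u_v, u_h$, the triple
\[
e_1 = \xi_1, \qquad e_2 = X_v + X_h, \qquad e_3 = (b\xi_2 - a\xi_3) + \Phi_1 X_h
\]
is a basis of $T_z p_1^{-1}(\Sigma)$, and a short calculation shows $e_2$ and $e_3$ are $\tilde{g}$-orthogonal with equal $\tilde{g}$-norms.

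The last step is to substitute this basis into the formula $\tilde{\varphi} = \tfrac{27}{25}\bigl(\tfrac{1}{5}\eta_{123} + \sum_i \eta_i \wedge \omega_i\bigr)$ of Proposition~\ref{can var} and to match it with the Gram determinant of $(e_1,e_2,e_3)$ in $\tilde{g}$. Three observations will make this short: the $\eta_i \wedge \omega_i$ terms for $i = 2, 3$ are annihilated by $e_1 = \xi_1$ (both factors vanish on $\xi_1$); the formula $\omega_1 = -g(\Phi_1(\cdot)^\top,(\cdot)^\top)$ together with $\Phi_1^2 = -\mathrm{Id}$ on $\mathcal{H}$ yields $\omega_1(e_2,e_3) = -|X_h|_g^2$; and $\eta_{123}(e_1,e_2,e_3) = -(a^2+b^2)$ is immediate from Lemma~\ref{xi eta S7}. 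Substituting $(s,t) = (3/5, 3/\sqrt{5})$ then forces $|\tilde{\varphi}(e_1,e_2,e_3)| = \sqrt{\det \tilde{g}(e_i,e_j)}$, which after fixing the orientation of $p_1^{-1}(\Sigma)$ yields associativity. The main obstacle is purely bookkeeping: one must simultaneously track the $\pi$-decomposition $\mathcal{V} \oplus \mathcal{H}$, the coarser $p_1$-decomposition of $TS^7$, and the opposite signs with which $I_1'$ compares to $I_1$ on $\underline{\mathcal{V}}$ versus $\underline{\mathcal{H}}$.
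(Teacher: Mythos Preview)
Your proposal is correct and follows essentially the same approach as the paper: both contract $\tilde{\varphi}$ with the fibre direction $\xi_{1}$ and identify the resulting $2$-form on $\mathrm{span}(\xi_{2},\xi_{3})\oplus\mathcal{H}$ with the $I_{1}'$-K\"ahler form. The paper does this in one line by writing $\tilde{\varphi}$ in the orthonormal coframe $\{\tilde{\eta}_{i},\tilde{X}^{j}\}$ and observing $\tilde{\eta}_{23}-\tilde{X}^{01}-\tilde{X}^{23}=-\tilde{G}(I_{1}'(\cdot),\cdot)$, whereas you unwind the same identity by evaluating on an explicit adapted basis; the content is identical.
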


\begin{proof}
Use the notation of Remark \ref{omega ptwise} and Proposition \ref{can var}. 
Setting $\tilde{\eta}_{i} = (3/5) \eta_{i}$ and $\tilde{X}^{i} = (3/\sqrt{5}) X^{i}$, 
we have 
\begin{align*}
\tilde{\varphi} = \tilde{\eta}_{1} (\tilde{\eta}_{23} - \tilde{X}^{01} - \tilde{X}^{23}) 
- \tilde{\eta}_{2} (\tilde{X}^{02} + \tilde{X}^{31}) 
- \tilde{\eta}_{3} (\tilde{X}^{03} + \tilde{X}^{12}). 
\end{align*}
Then we obtain 
$\tilde{\eta}_{23} - \tilde{X}^{01} - \tilde{X}^{23} = - \tilde{G} (I_{1}^{'} (\cdot), \cdot)$, 
where $\tilde{G} = \tilde{\eta}_{2} \otimes \tilde{\eta}_{2} + \tilde{\eta}_{3} \otimes \tilde{\eta}_{3} 
+ \sum_{j = 0}^{3} \tilde{X}^{j} \otimes \tilde{X}^{j}$,   
which gives the proof.
\end{proof}

\begin{rem} \label{basic concrete example of asso}

Each fiber $F \cong S^{2}$ of $p_{2}$ is an obvious $I_{1}^{'}$-holomorphic curve. 
Then the Hopf lift $p_{1}^{-1} (F) = \pi^{-1}(*)$ of $F$ 
is associative. This is the intersection of 
a quaternionic plane and $S^{7}$. 

If $\Sigma \subset \mathbb{C}P^{3}$ is a horizontal $I_{1}$-holomorphic curve, 
where we call the curve $\Sigma$ horizontal if $T \Sigma \subset \underline{\mathcal{H}}|_{\Sigma}$, 
$\Sigma \subset \mathbb{C}P^{3}$ is also an $I_{1}^{'}$-holomorphic curve. 
Thus the Hopf lift $p_{1}^{-1}(\Sigma)$ is associative. 
Since $I_{1}$ is the standard complex structure, we know many examples of these curves. 
\end{rem}

\section{Classification of Cayley planes}

In this section, we prove Theorem \ref{plane asso}. 
Let $V^{4} \subset \mathbb{R}^{8}$ be a 4-plane. 
We classify the associative submanifolds of the form $V \cap S^{7}$ 
by choosing a ``good" frame of $V$ by the  ${\rm Sp}(1) {\rm Sp}(2)$-action 
to consider the associative condition. 

Suppose that $V$ is spanned by $e_{0}, \cdots, e_{3}$ $(e_{i} \in \mathbb{C}^{4} = \mathbb{R}^{8})$.
Since ${\rm Sp}(1) {\rm Sp}(2)$ acts transitively on $S^{7}$, 
we may assume that 
\begin{align*}
e_{0} = {}^t\! (1, 0, 0, 0). 
\end{align*}
The stabilizer of ${\rm Sp}(1) {\rm Sp}(2)$ at $e_{0}$ is 
diffeomorphic to ${\rm Sp}(1) {\rm Sp}(1)$, which acts on $S^{7}$ as 
$
[(p, q)] \cdot (q_{1}, q_{2}) = (p q_{1} \overline{p}, p q_{2} \overline{q}), 
$
where $[(p, q)] \in {\rm Sp}(1) {\rm Sp}(1)$ and $(q_{1}, q_{2}) \in S^{7} \subset \mathbb{H}^{2} = \mathbb{R}^{8}$. 
Thus we may assume that 
\begin{align*}
e_{1} = {}^t\! (ci, 0, s, 0), 
\end{align*}
for $c, s \geq 0, c^{2}+s^{2} = 1$. 
Since $\{ [(z, z)]; z \in {\rm U}(1) \} \subset {\rm Sp}(1) {\rm Sp}(1)$ fixes $e_{1}$, 
by sweeping out the first entry, 
we may assume that 
\begin{align*}
e_{2} = {}^t\! (0, A_{2}, A_{3} + i B_{3}, A_{4} + i B_{4}), 
\end{align*}
for $A_{j}, B_{k} \in \mathbb{R}, A_{2} \geq 0$.

\begin{lem} \label{asso in e0}
We have 
\begin{align*}
\frac{5}{3} (e_{1} \times e_{2})_{e_{0}} 
= 
{}^t\! (5 B_{3} s i, 5 A_{4} s +(-A_{2} c + 5 B_{4} s)i, -B_{3} c + A_{3} c i, 
(-B_{4} c -A_{2} s) + A_{4} c i). 
\end{align*}
\end{lem}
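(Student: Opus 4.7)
The plan is to compute the cross product $e_1 \times e_2$ at $e_0$ directly from its definition $\tilde g(e_1 \times e_2, \cdot) = \tilde \varphi(e_1, e_2, \cdot)$, using the explicit formula for $\tilde{\varphi}$ from Proposition \ref{can var} and Remark \ref{omega ptwise}. The main task is to expand everything in a convenient orthonormal basis of $T_{e_0}S^{7}$ with respect to the undeformed metric $g$, read off the components of the resulting $1$-form, and then convert back into $\mathbb{C}^{4}$-coordinates.

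First I would fix a $g$-orthonormal basis at $e_0 = {}^t(1,0,0,0)$. From Lemma \ref{xi eta S7}, the vertical vectors are $v_i = \xi_i|_{e_0}$, namely ${}^t(-i,0,0,0)$, ${}^t(0,-1,0,0)$, ${}^t(0,-i,0,0)$. For the horizontal part I would pick $X_0 = {}^t(0,0,1,0)$ and set $X_i = \Phi_i(X_0)$, obtaining $X_1 = {}^t(0,0,i,0)$, $X_2 = {}^t(0,0,0,1)$, $X_3 = {}^t(0,0,0,i)$, using that $I_2 u = J \overline u$ on $\mathcal{H}$. In this basis
\begin{align*}
e_1 &= -c\,v_1 + s\,X_0, \\
e_2 &= -A_2\,v_2 + A_3\,X_0 + B_3\,X_1 + A_4\,X_2 + B_4\,X_3.
\end{align*}

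Next, using Remark \ref{omega ptwise} to write $\omega_i$ pointwise in the dual basis $\{\eta_i, X^j\}$, the $3$-form $\tilde \varphi$ at $e_0$ becomes
\begin{align*}
\tilde \varphi = \tfrac{27}{25}\Bigl(\tfrac{1}{5}\eta_{123} - \eta_1{\wedge}(X^{01}{+}X^{23}) - \eta_2{\wedge}(X^{02}{+}X^{31}) - \eta_3{\wedge}(X^{03}{+}X^{12})\Bigr).
\end{align*}
Plugging the components of $e_1$ and $e_2$ into each monomial (most determinants are trivially zero because $e_1$ involves only $\eta_1, X^0$ and $e_2$ involves only $\eta_2, X^0, X^1, X^2, X^3$), I collect the surviving contributions. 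A short calculation yields the $1$-form
\begin{align*}
\tilde \varphi(e_1,e_2,\cdot) = \tfrac{27}{25}\Bigl(-sB_3\,\eta_1 - sA_4\,\eta_2 + \bigl(\tfrac{1}{5}cA_2 - sB_4\bigr)\eta_3 - cB_3\,X^0 + cA_3\,X^1 -(cB_4+sA_2)X^2 + cA_4\,X^3\Bigr).
\end{align*}

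Finally, I invert $\tilde g$. Since $\tilde g|_{\mathcal V} = (3/5)^2 g|_{\mathcal V}$ and $\tilde g|_{\mathcal H} = 9/5 \cdot g|_{\mathcal H}$, a vertical component of $e_1\times e_2$ is obtained by dividing the corresponding $\eta_i$-coefficient by $9/25$, while a horizontal component comes from dividing the corresponding $X^j$-coefficient by $9/5$. Multiplying the result by $5/3$ and translating back to $\mathbb{C}^{4}$ via $v_1, v_2, v_3, X_0, X_1, X_2, X_3 \leftrightarrow {}^t(-i,0,0,0)$, ${}^t(0,-1,0,0)$, ${}^t(0,-i,0,0)$, ${}^t(0,0,1,0)$, ${}^t(0,0,i,0)$, ${}^t(0,0,0,1)$, ${}^t(0,0,0,i)$ gives exactly the four entries in the stated formula. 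The only real obstacle is bookkeeping: verifying the signs coming from the permutations in each wedge-product determinant and keeping the distinct scale factors for $\mathcal V$ and $\mathcal H$ straight when inverting $\tilde g$.
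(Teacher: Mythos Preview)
Your proposal is correct and follows essentially the same approach as the paper: set up the $g$-orthonormal frame $\{\xi_1,\xi_2,\xi_3,X_0,X_1,X_2,X_3\}$ at $e_0$, expand $e_1,e_2$ in it, compute $\tilde\varphi(e_1,e_2,\cdot)$ from the explicit formula in Proposition~\ref{can var}, and then dualize using the scale factors $(3/5)^2$ and $9/5$ on $\mathcal V$ and $\mathcal H$. Your intermediate expression for $\tilde\varphi(e_1,e_2,\cdot)$ agrees with the paper's, and the final conversion to $\mathbb{C}^4$-coordinates is exactly the one the paper performs.
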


Thus denoting by $e_{3}$ the left-hand side, 
we see that ${\rm span}_{\mathbb{R}} \{ e_{1}, e_{2}, e_{3}\} \subset T_{e_{0}} S^{7}$ is associative. 
We deduce the condition by calculating $* \tilde{\varphi} (e_{i}, e_{j}, e_{k}, \cdot)_{e_{l}} = 0$ 
in the following cases: 
\begin{enumerate}[{(}1{)}]
\item  $c>0, A_{2} > 0,$
\item  $c>0, A_{2} = 0,$
\item  $c=0.$
\end{enumerate}

\begin{lem} \label{asso in e1}
In the case $(1)$, 
the condition $* \tilde{\varphi} (e_{0}, e_{2}, e_{3}, \cdot)_{e_{1}} = 0$ 
is equivalent to 
\begin{enumerate}[{(}i{)}]
\item
$s=0$, 
\item
$s \neq 0, \quad A_{3}= B_{3} =0, \quad c^{2}-3 s^{2}=0, \quad$ or 
\item
$s \neq 0, \quad A_{3}= B_{3} =0, \quad c(A_{2}^{2}+3 A_{4}^{2}+3 B_{4}^{2}) -2s A_{2} B_{4}=0. $
\end{enumerate}
\end{lem}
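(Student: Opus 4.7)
The plan is to compute the 1-form $* \tilde{\varphi} (e_0, e_2, e_3, \cdot)_{e_1}$ explicitly and then solve the resulting system of scalar equations. Using Proposition \ref{can var} together with Remark \ref{omega ptwise}, I would first set up a frame adapted to the point $e_1 = {}^t\! (ci, 0, s, 0) \in S^7$. The vertical vectors $\xi_1, \xi_2, \xi_3$ at $e_1$ are read off directly from Lemma \ref{xi eta S7}; choosing any unit horizontal vector $X_0$ at $e_1$ yields an orthonormal horizontal frame $\{X_0, X_1, X_2, X_3\}$ with $X_i = \Phi_i (X_0)$, so that $* \tilde{\varphi}_{e_1}$ acquires the explicit expression of Proposition \ref{can var} in the corresponding dual coframe.

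Next, I would expand $e_3$ via Lemma \ref{asso in e0} and decompose each of the three vectors $e_0$, $e_2$, $e_3$ in the adapted basis $\{\xi_1, \xi_2, \xi_3, X_0, X_1, X_2, X_3\}$ of $T_{e_1} S^7$. Components along the radial direction $\mathbb{R} \cdot e_1$ do not contribute to $* \tilde{\varphi} ( \cdot, \cdot, \cdot, \cdot )_{e_1}$, so only the projections onto $e_1^{\perp}$ enter the computation. Substituting these projections into $* \tilde{\varphi}(e_0, e_2, e_3, \cdot)$ and reading off the seven coefficients in the dual coframe produces a system of polynomial equations in the parameters $s, c, A_2, A_3, B_3, A_4, B_4$.

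In the regime $c > 0$, $A_2 > 0$, I would then reduce this system. When $s = 0$ the equations are automatically satisfied, giving case (i). Assuming $s \neq 0$, I expect a subset of the equations to force $A_3 = B_3 = 0$ first. With this simplification the remaining constraints should split into the two alternatives of (ii) and (iii): either the purely angular condition $c^2 - 3 s^2 = 0$, or the cubic relation $c(A_2^2 + 3 A_4^2 + 3 B_4^2) - 2 s A_2 B_4 = 0$.

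The principal obstacle is the sheer bulk of the algebra: one has to keep track of the real and imaginary parts coming from $\mathbb{C}^4 \cong \mathbb{R}^8$, handle the scale factors $3/5$ and $3/\sqrt{5}$ introduced by the canonical variation, and correctly identify which of the seven coefficient equations are independent. Separating the calculation into the contributions of the $\tilde{\eta}$-part and the horizontal part of $* \tilde{\varphi}$, and exploiting the residual $\mathrm{U}(1)$ stabilizer still fixing $e_0$ and $e_1$, should keep the argument tractable and make the dichotomy between (ii) and (iii) transparent.
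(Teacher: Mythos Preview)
Your plan is essentially the paper's own: set up the adapted frame $\{\xi_1,\xi_2,\xi_3,X_0,X_1,X_2,X_3\}$ at $e_1$, expand $e_0,e_2,e_3$ in it, contract into $*\tilde{\varphi}$, and solve the resulting polynomial system---the paper records the system as three equations, shows $s=0$ is a solution, and for $s\neq 0$ uses two of them (linear in the products $A_3A_4,\,A_2B_3,\,B_3B_4,\,A_2A_3,\,A_3B_4,\,B_3A_4$) to force $A_3=B_3=0$, after which the remaining equation factors as $(c^2-3s^2)\bigl(c(A_2^2+3A_4^2+3B_4^2)-2sA_2B_4\bigr)=0$. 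One small correction: the residual $\mathrm{U}(1)$ fixing $e_0$ and $e_1$ was already spent normalizing $e_2$, so it is not available here and the paper does not invoke it in this step.
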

 
We abbreviate 
the case that (1) and (ii) hold as the case (1)-(ii) in the following.

\begin{lem} \label{asso in e2}
In the case (1)-(ii) or (1)-(iii), 
by normalizing $e_{2}$, 
we may assume that $A_{2}^{2} + A_{4}^{2} + B_{4}^{2}=1$. 
Then 
$* \tilde{\varphi} (e_{0}, e_{1}, e_{3}, \cdot)_{e_{2}} = 0$ 
is equivalent to 
\begin{enumerate}[{(}a{)}]
\item
$A_{4}=B_{4}=0$, 
\item
$A_{4} =0, \quad A_{2}^{2}-3 B_{4}^{2}=0, \quad$ or 
\item
$A_{4}= 0, \quad (c^{2} + 3 s^{2}) A_{2}-2cs B_{4}=0. $
\end{enumerate}
\end{lem}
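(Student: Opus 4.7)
The plan is to compute the 1-form $*\tilde\varphi(e_0, e_1, e_3, \cdot)$ at the point $e_2 \in S^7$ and to determine for which parameters it vanishes on $T_{e_2}S^7$. First I substitute $A_3 = B_3 = 0$ (imposed by case (1)-(ii) or (1)-(iii)) and the normalization $A_2^2 + A_4^2 + B_4^2 = 1$ into the expressions for $e_2$ and $e_3$. This gives $e_2 = {}^t(0, A_2, 0, A_4 + iB_4) \in S^7$ and, from Lemma \ref{asso in e0},
\[
e_3 = {}^t\!\bigl(0,\ 5A_4 s + (-A_2 c + 5B_4 s)i,\ 0,\ -(B_4 c + A_2 s) + A_4 c\, i\bigr).
\]
A quick computation shows $\langle e_0, e_2\rangle_{\mathbb{C}} = \langle e_1, e_2\rangle_{\mathbb{C}} = 0$, while ${\rm Re}\langle e_3, e_2\rangle_{\mathbb{C}} = 4A_2 A_4 s$, so the tangent basis of $V \cap T_{e_2}S^7$ is $\{e_0,\ e_1,\ e_3 - 4A_2 A_4 s\, e_2\}$.

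Next I expand $*\tilde\varphi(e_0, e_1, e_3 - 4A_2 A_4 s\, e_2, X)$ for an arbitrary $X \in T_{e_2}S^7$ using the formula in Proposition \ref{can var}. Each summand of $*\tilde\varphi$ is built from $\eta_i$ and $\omega_i$, which I evaluate at $e_2$ using Lemma \ref{xi eta S7} for $\xi_i, \eta_i$ together with the identity $\omega_i = -g(\Phi_i(\cdot)^\top, (\cdot)^\top)$ stated just before Remark \ref{omega ptwise}. A convenient basis of a complement to ${\rm span}_{\mathbb{R}}\{e_0, e_1, e_3\}$ in $T_{e_2}S^7$ is $\{\xi_1|_{e_2}, \xi_2|_{e_2}, \xi_3|_{e_2}, X_h\}$ for a suitable horizontal vector $X_h$; requiring the 1-form to vanish on each basis element yields a polynomial system in $A_2, A_4, B_4, c, s$.

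Finally, I solve this system under the standing constraints $A_2 > 0$, $s \neq 0$ from case (1)-(ii) or (1)-(iii), together with $c^2 + s^2 = 1$ and $A_2^2 + A_4^2 + B_4^2 = 1$. I expect the equations to factor into the three possibilities (a) $A_4 = B_4 = 0$, (b) $A_4 = 0$ and $A_2^2 - 3B_4^2 = 0$, or (c) $A_4 = 0$ and $(c^2 + 3s^2)A_2 - 2cs B_4 = 0$. The main obstacle is purely computational: expanding $*\tilde\varphi(e_0, e_1, e_3 - 4A_2 A_4 s\, e_2, X)$ produces many terms. Fortunately the zero first and third coordinates of $e_2$ and $e_3$ kill most of them, and the clean factorization paralleling that already seen in Lemma \ref{asso in e1} is what makes the answer take exactly the stated three-case form.
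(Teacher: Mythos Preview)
Your plan is essentially the paper's own approach: at $e_{2}$ one sets up the frame $\{\xi_{1},\xi_{2},\xi_{3},X_{0},X_{1},X_{2},X_{3}\}$ (the paper takes $X_{0}={}^{t}(A_{4}+iB_{4},0,-A_{2},0)$), expresses $e_{0},e_{1},e_{3}$ in this frame, and reads off $*\tilde\varphi(e_{0},e_{1},e_{3},\cdot)_{e_{2}}=0$ as three polynomial equations in $c,s,A_{2},A_{4},B_{4}$.  Your subtraction of $4A_{2}A_{4}s\,e_{2}$ is harmless since $e_{2}$ is radial and contracts to zero with a form on $S^{7}$.

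The one place where the paper does more than ``expect the equations to factor'' is the elimination of $A_{4}$.  The resulting system does \emph{not} factor directly into the three cases: one of the equations is $sA_{2}A_{4}(cA_{2}^{2}-2sA_{2}B_{4}+3cA_{4}^{2}+3cB_{4}^{2})=0$, and if $A_{4}\neq 0$ the bracket must vanish; the paper then substitutes this constraint into a second equation to force $2A_{2}(cA_{2}-5sB_{4})(cA_{2}+sB_{4})=0$, and checks that neither linear factor is compatible with the positivity of the quadratic form in the bracket.  Only after this short contradiction argument does one get $A_{4}=0$, whereupon the remaining equation factors as $B_{4}(A_{2}^{2}-3B_{4}^{2})\{(c^{2}+3s^{2})A_{2}-2csB_{4}\}=0$, yielding (a), (b), (c).  So your outline is correct, but be prepared for that extra elimination step rather than a one-shot factorization.
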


\begin{proof}[Proof of Lemma \ref{asso in e0}]
At $e_{0}$, we have 
\begin{align*}
\xi_{1} &= {}^t\! (-i, 0, 0, 0), \\ 
\xi_{2} &= {}^t\! (0, -1, 0, 0), \\ 
\xi_{3} &= {}^t\! (0, -i, 0, 0). 
\end{align*}
Setting $X_{0} = {}^t\! (0, 0, 1, 0), $ we see $X_{0} \in \mathcal{H}_{e_{0}}$.
Then $X_{i} = \Phi_{i} (X_{0})$ for $i=1, 2, 3$ is described as 
\begin{align*}
X_{1} &= {}^t\! (0, 0, i, 0), \\ 
X_{2} &= {}^t\! (0, 0, 0, 1), \\ 
X_{3} &= {}^t\! (0, 0, 0, i), 
\end{align*} 
and we have 
\begin{align*}
e_{1} &= -c \xi_{1} + s X_{0}, \\
e_{2} &= -A_{2} \xi_{2} + A_{3} X_{0} + B_{3} X_{1} + A_{4} X_{2} + B_{4} X_{3}.
\end{align*}
By the definition of $\tilde{\varphi}$ in Proposition \ref{can var}, 
we obtain 
\begin{align*}
\tilde{\varphi} (e_{1}, e_{2}, \cdot)_{e_{0}} = 
\frac{27}{125} c (A_{2} \eta_{3} + 5 A_{3} X^{1} - 5 B_{3} X^{0} + 5A_{4} X^{3} -5 B_{4} X^{2}) \\
+ \frac{27}{25} s (-A_{2} X^{2} - B_{3} \eta_{1} - A_{4} \eta_{2} - B_{4} \eta_{3}). 
\end{align*}
Since $\tilde{g} = \frac{9}{25} \sum_{i = 1}^{3} \eta_{i} + \frac{9}{5} \sum_{a = 0}^{3} X^{a}$, 
we obtain the lemma.
\end{proof}

\begin{proof}[Proof of Lemma \ref{asso in e1}]
As in the proof of Lemma \ref{asso in e0}, we have at $e_{1}$
\begin{align*}
\xi_{1} &= {}^t\! (c, 0, -is, 0), \\ 
\xi_{2} &= {}^t\! (0, ic, 0, -s), \\ 
\xi_{3} &= {}^t\! (0, -c, 0, -is). 
\end{align*}
Setting $X_{0} = {}^t\! (0, is, 0, c) \in \mathcal{H}_{e_{1}}$, 
$X_{i} = \Phi_{i} (X_{0})$ for $i=1, 2, 3$ is described as 
\begin{align*}
X_{1} &= {}^t\! (0, -s, 0, ic), \\ 
X_{2} &= {}^t\! (is, 0, -c, 0), \\ 
X_{3} &= {}^t\! (-s, 0, -ic, 0). 
\end{align*}
Then by a direct computation,  
$* \tilde{\varphi} (e_{0}, e_{2}, e_{3}, \cdot)_{e_{1}} = 0$ is equivalent to 
\begin{align}
4s(c^{2} - 3 s^{2}) (c A_{2}^{2} + 3c A_{4}^{2} + 3c B_{4}^{2} -2s A_{2} B_{4}) = 0, \label{plane e1 1}\\
s 
\left( 
\begin{array}{ccc}
c(-2s^{2} + c^{2}) & -2 s^{3}      & c(3s^{2} + c^{2}) \\
3sc                   & 3s^{2}+c^{2} & -2sc \\
\end{array} 
\right)
\left( 
\begin{array}{c}
A_{3} A_{4} \\
A_{2} B_{3} \\
B_{3} B_{4}
\end{array} 
\right)
= 0, \label{plane e1 2}\\
s 
\left( 
\begin{array}{ccc}
sc & c^{2}-2 s^{2}      & -(3s^{2} + c^{2}) \\
c                   & -3s & -2s \\
\end{array} 
\right)
\left( 
\begin{array}{c}
A_{2} A_{3} \\
A_{3} B_{4} \\
B_{3} A_{4}
\end{array} 
\right)
= 0. \label{plane e1 3}
\end{align}
It is clear that $s=0$ is a solution of (\ref{plane e1 1}), (\ref{plane e1 2}) and (\ref{plane e1 3}). 
We may assume that $s \neq 0$. 
From (\ref{plane e1 2}) and (\ref{plane e1 3}), we have 
\begin{align*}
(A_{3} A_{4}, A_{2} B_{3}, B_{3} B_{4}) &= k (-(c^{2} + 5 s^{2}), 5sc, c^{2}), \\
(A_{2} A_{3}, A_{3} B_{4}, B_{3} A_{4}) &= l (5s, c, c), 
\end{align*}
for $k, l \in \mathbb{R}$. 
Since 
$A_{3} A_{4} B_{3} B_{4} = -k^{2} c^{2} (c^{2} + 5s^{2}) = l^{2} c^{2}$, we obtain $k=l=0$. 
The assumption $A_{2} > 0$ gives $A_{3} = B_{3} = 0$.
\end{proof}

\begin{proof}[Proof of Lemma \ref{asso in e2}]
As in the proof of Lemma \ref{asso in e0}, we have at $e_{1}$
\begin{align*}
\xi_{1} &= {}^t\! (0, -i A_{2}, 0, B_{4} -i A_{4}), \\ 
\xi_{2} &= {}^t\! (A_{2}, 0, A_{4}-i B_{4}, 0), \\ 
\xi_{3} &= {}^t\! (i A_{2}, 0, B_{4} + i A_{4}, 0). 
\end{align*}
Setting $X_{0} = {}^t\! (A_{4} + i B_{4}, 0, -A_{2}, 0) \in \mathcal{H}_{e_{2}}$, 
$X_{i} = \Phi_{i} (X_{0})$ for $i=1, 2, 3$ is described as 
\begin{align*}
X_{1} &= {}^t\! (-B_{4} + i A_{4}, 0, -i A_{2}, 0), \\ 
X_{2} &= {}^t\! (0, A_{4} - i B_{4}, 0, -A_{2}), \\ 
X_{3} &= {}^t\! (0, B_{4} + i A_{4}, 0, -i A_{2}). 
\end{align*}
Then by a direct computation, 
$* \tilde{\varphi} (e_{0}, e_{1}, e_{3}, \cdot)_{e_{2}} = 0$ is equivalent to 
\begin{align}
A_{4} \{ c A_{2} (c A_{2}^{2} -2s A_{2} B_{4} -3 c A_{4}^{2} -3c  B_{4}^{2}) & \nonumber \\ 
+ 
6 B_{4} s (-3 s A_{2} B_{4} + 2c A_{4}^{2} + 2c B_{4}^{2}) \}&= 0, \label{plane e2 1}\\
(c^{2}+ 3s^{2}) A_{2}^{3} B_{4} -2cs A_{2}^{2} B_{4}^{2} + 3 (3s^{2}-c^{2}) A_{2} A_{4}^{2} B_{4}& \nonumber \\
-3 (c^{2}+ 3s^{2}) A_{2} B_{4}^{3} -6cs A_{4}^{4} + 6cs B_{4}^{4} &=0, \label{plane e2 2}\\
s A_{2} A_{4} (c A_{2}^{2} -2s A_{2} B_{4} + 3c A_{4}^{2} + 3c B_{4}^{2}) &= 0. \label{plane e2 3}
\end{align}
Suppose that $A_{4} \neq 0$ for a contradiction. 
Then (\ref{plane e2 3}) implies that 
\begin{align}
c A_{2}^{2} -2s A_{2} B_{4} + 3c A_{4}^{2} + 3c B_{4}^{2} = 0. \label{plane e2 4}
\end{align}
Eliminating $A_{4}^{2}$ and $B_{4}^{2}$ from (\ref{plane e2 1}), we have 
$
2A_{2} (c A_{2} - 5s B_{4}) (c A_{2} + s B_{4}) =0.
$
However, 
the left hand side of (\ref{plane e2 4}) is greater than $0$ when 
$B_{4} = \frac{c}{5 s} A_{2}$ or $- \frac{c}{s} A_{2}$. 
Thus we have $A_{4} = 0$. 

Then the left-hand sides of (\ref{plane e2 1}) and (\ref{plane e2 3}) vanish, 
and that of (\ref{plane e2 2}) is equal to 
$B_{4} (A_{2}^{2} - 3 B_{4}^{2}) \{ (c^{2} + 3 s^{2}) A_{2} -2cs B_{4} \}$, 
hence the proof is done.
\end{proof}

\begin{proof}[Proof of Theorem\ref{plane asso}]
From Lemma \ref{asso in e1} and \ref{asso in e2}, 
we consider the following cases: 

\underline{Case (1)-(i)}
By the ${\rm Sp}(1)$-action, 
we may assume that $B_{3}=A_{4}=B_{4}=0$. 
Normalizing $e_{2}$, we may assume $A_{2}^{2}+A_{3}^{2}=1$. 
Then as in the proof of Lemma \ref{asso in e1}, 
$* \tilde{\varphi}(e_{0}, e_{1}, e_{3}, \cdot)_{e_{2}} = 0$ is equivalent to 
$A_{3} (A_{2}^{2} - A_{3}^{2})=0$. 
Hence we have 
\begin{align}
(c, s, A_{2}, A_{3}, B_{3}, A_{4}, B_{4}) =& (1, 0, 1, 0, 0, 0, 0), \label{plane sol 0} \\
                                               & \left(1, 0, \frac{\sqrt{3}}{2}, \pm \frac{1}{2}, 0, 0, 0 \right). \label{plane sol 1}
\end{align}

\underline{Case (1)-(ii)-(a)}
By normalizing $e_{2}$, we have $A_{2} = 1$. Then we see 
\begin{align}
(c, s, A_{2}, A_{3}, B_{3}, A_{4}, B_{4}) = \left( \frac{\sqrt{3}}{2}, \frac{1}{2}, 1, 0, 0, 0, 0 \right). \label{plane sol 2}
\end{align}

In case
\underline{(1)-(ii)-(b)}, 
\underline{(1)-(ii)-(c)}, and 
\underline{(1)-(iii)-(b)}, 
we have the following solutions: 
\begin{align}
(c, s, A_{2}, A_{3}, B_{3}, A_{4}, B_{4}) =& \left( \frac{\sqrt{3}}{2}, \frac{1}{2}, \frac{\sqrt{3}}{2}, 
                                                  0, 0, 0, \pm \frac{1}{2} \right), \label{plane sol 3} \\
                                                 & \left( \frac{\sqrt{3}}{2}, \frac{1}{2}, \frac{1}{2}, 
                                                  0, 0, 0, \frac{\sqrt{3}}{2} \right), \label{plane sol 4} \\
                                                 & \left( \frac{1}{2}, \frac{\sqrt{3}}{2}, \frac{\sqrt{3}}{2}, 
                                                  0, 0, 0, \frac{1}{2} \right). \label{plane sol 5}
\end{align}
In case \underline{(1)-(iii)-(a)} and \underline{(1)-(iii)-(c)}, 
we have no solutions. 

The solution (\ref{plane sol 0}) corresponds to the $\mathbb{H}$-plane. 
The planes corresponding to (\ref{plane sol 2}), (\ref{plane sol 3}), (\ref{plane sol 4}), and (\ref{plane sol 5}) 
are congruent up to the ${\rm Sp}(1) {\rm Sp}(2)$-action to that of (\ref{plane sol 1}), 
which is not associative at $(e_{0}+e_{1})/\sqrt{2}$ 
since $* \tilde{\varphi}((-e_{0}+e_{1})/\sqrt{2}, (e_{2}-e_{3})/\sqrt{2}, (e_{2}+e_{3})/\sqrt{2})_{(e_{0}+e_{1})/\sqrt{2}} \neq 0$.

\underline{Case (2)}
We may assume that the first and the second entries of $e_{3}$ are zero. 
Hence we have $B_{3}s = A_{4}s = B_{4}s = 0$. 
If $s \neq 0$, we obtain the plane $V_{2}$. 
If $s = 0$, the corresponding plane is congruent up to ${\rm Sp}(2)$-action to $V_{2}$. 

\underline{Case (3)}
We may assume that the first and the second entries of $e_{2}$ and $e_{3}$ are zero. 
However, this implies that $e_{3} = 0$, which is a contradiction. 
\end{proof}

\section{Classification of homogeneous associative submanifolds}

In this section, we prove Theorem \ref{classification homog asso}. 
First, we classify compact Lie subgroups of ${\rm Sp}(1) {\rm Sp}(2)$ which have 3-dimensional orbits.  
Let $G$ be a compact connected Lie subgroup of ${\rm Sp}(1) {\rm Sp}(2)$.
Suppose that $G$ has a 3-dimensional orbit $A$. 
Since $G$ acts on $A$ as an isometry group, 
$\dim G \leq 3 \cdot (3+1)/2 = 6$ and $\dim G \neq 5$. 
(see \cite{Yano}, Chapter I\hspace{-.1em}V, Theorem 9.1). 
We only have to consider the Lie algebra $\mathfrak{g} \subset 
\mathfrak{sp}(1) \oplus \mathfrak{sp}(2)$ of $G$.

\subsection{Case $\dim \mathfrak{g} = 3$}

Suppose that $\underline{\dim \mathfrak{g} = 3}$. 
By the classification of the compact Lie algebras, 
$\mathfrak{g}$ is isomorphic to $\mathfrak{su}(2)$ or $\mathfrak{t}^{3}$, 
where $\mathfrak{t}^{3}$ is a Lie algebra of the 3-torus $T^{3}$. 
The case $\mathfrak{g} = \mathfrak{t}^{3}$ 
corresponds to 
the inclusion $T^{3} \hookrightarrow {\rm U}(1) {\rm Sp}(2) \subset {\rm U}(4)$ given by 
\begin{align} \label{T3 action} 
(e^{i \alpha}, e^{i \beta}, e^{i \gamma}) \mapsto 
{\rm diag}(e^{i(\alpha + \beta)},  e^{i(\alpha - \beta)}, e^{i(\alpha + \gamma)}, e^{i(\alpha - \gamma)}), 
\end{align}
which is a maximal torus of ${\rm Sp}(1) {\rm Sp}(2)$ 
and induces the $T^{3}$-action on $S^{7}$. 
Define the basis $\{ F_{1}, F_{2}, F_{3} \}$ of the Lie algebra $\mathfrak{t}^{3} \cong \mathbb{R}^{3}$ of $T^{3}$ by 
\begin{align} \label{basis of t3}
F_{1} = (1, 0, 0), \qquad
F_{2} = (0, 1, 0), \qquad
F_{3} = (0, 0, 1). 
\end{align}
Via the inclusion $\mathfrak{t}^{3} \hookrightarrow \mathfrak{u}(1) \oplus \mathfrak{sp}(2)$, 
$F_{1}, F_{2}, F_{3}$ correspond to
\begin{align} \label{lie alg of T3}
\left(
\begin{array}{cccc}
 i &  &  & \\
   & i &  & \\
   &  & i & \\
   &  &   & i
\end{array} 
\right), 
\left(
\begin{array}{cccc}
i&  & & \\
 & -i & & \\
 &  & 0   & \\
 &  &   & 0
\end{array} 
\right), 
\left(
\begin{array}{cccc}
0  &  &  & \\
   & 0&   & \\
   &  & i & \\
   &  &   &-i 
\end{array} 
\right), 
\end{align}
respectively.

When $\mathfrak{g} = \mathfrak{su}(2)$, we see 
that $\mathfrak{su}(2) = \mathfrak{sp}(1)_{L}$ or $\mathfrak{su}(2) \subset \mathfrak{sp}(2)_{R}$. 
Suppose that $\mathfrak{su}(2) \subset \mathfrak{sp}(2)_{R}$. 
Recall that any representation of the compact Lie group ${\rm SU}(2)$ is 
completely reducible and 
the dimension of the real irreducible representation of ${\rm SU}(2)$ is of the form 
$4k, 2l-1 (k, l \geq1)$. 
Thus we have 3 types of inclusions 
$\mathfrak{su}(2) \hookrightarrow \mathfrak{so}(5)$ given by 
\begin{align*}
&\mathfrak{su}(2) = \mathfrak{so}(3) \hookrightarrow \mathfrak{so}(5),\\
&\mathfrak{su}(2) \hookrightarrow \mathfrak{so}(4) \hookrightarrow \mathfrak{so}(5),\\
&\mathfrak{su}(2) \hookrightarrow \mathfrak{so}(5) \mbox {: irreducibly}. 
\end{align*}
The identification $\mathfrak{sp}(2) = \mathfrak{so}(5)$ 
induces three types of inclusions ${\rm SU}(2) \hookrightarrow {\rm Sp}(2)$. 
Hence we have 
the following four types of inclusions ${\rm SU}(2) \hookrightarrow {\rm Sp}(1) {\rm Sp}(2)$.

\begin{enumerate}
\item ${\rm SU}(2) = {\rm Sp}(1)_{L}$ acting on $S^{7}$ by (\ref{right sp1 action}), 
\item The inclusion ${\rm SU}(2) \hookrightarrow {\rm Sp}(2)$ given by 
\begin{align} \label{diag SU2} 
\left(
\begin{array}{cc}
a & -\overline{b} \\
b & \overline{a} \\
\end{array} 
\right) 
\mapsto 
\left(
\begin{array}{cccc}
\overline{a}&  & -b & \\
               & a&     & - \overline{b} \\
\overline{b}&  &a    & \\
               &b&      & \overline{a}
\end{array} 
\right), 
\end{align}
which induces the ${\rm SU}(2)$-action on $S^{7}$. 
Define the basis $\{ E_{1}, E_{2}, E_{3} \}$ of the Lie algebra $\mathfrak{su}(2)$ of ${\rm SU}(2)$
satisfying $[E_{i}, E_{i+1}] = 2E_{i+2}$ for $i \in \mathbb{Z}/3$ by 
\begin{align} \label{basis of su2}
E_{1} = 
\left(
\begin{array}{cc}
0  & 1 \\
-1 & 0 \\
\end{array} 
\right),  \qquad
E_{2} = 
\left(
\begin{array}{cc}
0  & i \\
i   & 0 \\
\end{array} 
\right),  \qquad
E_{3} = 
\left(
\begin{array}{cc}
i  &0 \\
0 &i \\
\end{array} 
\right). 
\end{align}
Via this inclusion $\mathfrak{su}(2) \hookrightarrow \mathfrak{sp}(2)$, 
$E_{1}, E_{2}, E_{3}$ correspond to
\begin{align} \label{lie alg of diag SU2}
\left(
\begin{array}{cccc}
   &  & 1 & \\
   &  &    & 1 \\
-1&  &    & \\
   &-1&   & 
\end{array} 
\right), 
\left(
\begin{array}{cccc}
   &  & -i & \\
   &  &    & i \\
-i&  &    & \\
   &i &   & 
\end{array} 
\right), 
\left(
\begin{array}{cccc}
-i &  &  & \\
   & i&   & \\
   &  & i & \\
   &  &   &-i 
\end{array} 
\right), 
\end{align}
respectively. 
\item 
The inclusion ${\rm SU}(2) \hookrightarrow {\rm Sp}(2)$ given by 
\begin{align} \label{small SU2}
A
\mapsto 
\left(
\begin{array}{cc}
A & O_{2} \\
O_{2} & I_{2} \\
\end{array} 
\right). 
\end{align}
Via this inclusion $\mathfrak{su}(2) \hookrightarrow \mathfrak{sp}(2)$, 
$E_{1}, E_{2}, E_{3}$ correspond to
\begin{align} \label{lie alg of small SU2}
\left(
\begin{array}{ccc}
   & 1&  \\
-1&  &   \\
   &  & O_{2}  \\
\end{array} 
\right), 
\left(
\begin{array}{ccc}
   & i  &   \\
i  &   &    \\
  &   &  O_{2}
\end{array} 
\right), 
\left(
\begin{array}{ccc}
 i &  &  \\
   &-i&   \\
   &  & O_{2}
\end{array} 
\right), 
\end{align}
respectively. 
\item
The inclusion ${\rm SU}(2) \hookrightarrow {\rm Sp}(2)$ given by 
\begin{align} \label{irr SU2}
\left(
\begin{array}{cc}
a & -\overline{b} \\
b & \overline{a} \\
\end{array} 
\right) 
\mapsto 
\left(
\begin{array}{cccc}
a^{3}               & -\overline{b}^{3}              & \sqrt{3} a \overline{b}^{2} & - \sqrt{3} a^{2} \overline{b} \\
b^{3}               & \overline{a}^{3}               & \sqrt{3} \overline{a}^{2} b & \sqrt{3} \overline{a} b^{2} \\
\sqrt{3} ab^{2}  & -\sqrt{3} \overline{a}^{2} \overline{b} & \overline{a}(|a|^{2} -2|b|^{2}) & b (2|a|^{2} -|b|^{2}) \\
\sqrt{3} a^{2}b  & \sqrt{3} \overline{a} \overline{b}^{2}   & -\overline{b} (2|a|^{2} -|b|^{2}) & a(|a|^{2} -2|b|^{2})
\end{array} 
\right), 
\end{align}
which induces the ${\rm SU}(2)$-action on $S^{7}$. 
This action is an irreducible representation of ${\rm SU}(2)$ on $\mathbb{C}^{4}$. 
This is the induced action of ${\rm SU}(2)$ on $V_{3} = \mathbb{C}^{4}$ 
from the standard action on $\mathbb{C}^{2}$, 
where we use the notation of Lemma \ref{Fourier_S3}. 
Via $\mathfrak{su}(2) \hookrightarrow \mathfrak{sp}(2)$, 
$E_{1}, E_{2}, E_{3}$ correspond to
\begin{align} \label{lie alg of irr SU2}
\left(
\begin{array}{cccc}
            &             &              & \sqrt{3} \\
            &             & -\sqrt{3} &             \\
            &  \sqrt{3} &             & -2         \\
-\sqrt{3} &            & 2           & 
\end{array} 
\right), 
\left(
\begin{array}{cccc}
            &             &              & \sqrt{3}i \\
            &             & \sqrt{3}i  &             \\
            &  \sqrt{3}i &             & 2i         \\
\sqrt{3}i &              & 2i           & 
\end{array} 
\right), 
\left(
\begin{array}{cccc}
3i &    &   & \\
   & -3i&   & \\
   &    & -i & \\
   &    &    &i 
\end{array} 
\right), 
\end{align}
respectively. 
\end{enumerate}

\subsection{Case $\dim \mathfrak{g} = 4$} \label{Lie alg classification dim4}
By the classification of the compact Lie algebras, 
$\mathfrak{g}$ is isomorphic to $\mathfrak{su}(2) \oplus \mathbb{R}$. 
Since 
the inclusions 
$\mathfrak{su}(2) \hookrightarrow \mathfrak{sp} (1) \oplus \mathfrak{sp} (2)$ are classified, 
we have to find the 1-dimensional Lie subalgebras 
which commute with $\mathfrak{su} (2)$. 
Set 
\begin{align*}
Z(\mathfrak{su} (2)) = 
\{ X \in \mathfrak{sp} (1) \oplus \mathfrak{sp} (2); [X, Y] = 0 
\mbox{ for any }Y \in \mathfrak{su}(2) \}. 
\end{align*}

First consider the case $\mathfrak{su} (2) = \mathfrak{sp} (1)_{L}$.
Then we have 
$Z(\mathfrak{su}(2)) = \mathfrak{sp}(2)_{R}$. 
Take any 1-dimensional subspace $\mathfrak{k} \subset \mathfrak{sp}(2)_{R}$ 
and suppose that 
$G$ is the Lie subgroup of ${\rm Sp}(1) {\rm Sp}(2)$ whose Lie algebra is $\mathfrak{su}(2) \oplus \mathfrak{k}$.
Since the ${\rm Sp}(1)_{L}$-action commutes with the ${\rm Sp}(2)_{R}$-action, 
the $G$-orbit through $p \in S^{7}$ 
should be contained in ${\rm Sp}(1) \cdot p$ 
so that 
it is 3-dimensional. 
Thus this case is reduced to that of  (\ref{right sp1 action}).

Next, suppose that $\mathfrak{su}(2) \subset \mathfrak{sp} (2)$ is induced from (\ref{diag SU2}). 
In this case, we have
$
Z(\mathfrak{su} (2)) = \mathfrak{sp}(1)_{L} \oplus (\mathbb{R} {\rm diag}(i, -i, i, -i))_{R}.
$
The Lie subgroup $G \subset {\rm Sp}(2)$ 
whose Lie algebra is $(\mathfrak{su}(2) \oplus \mathbb{R} {\rm diag}(i, -i, i, -i))_{R}$ 
is ${\rm U}(2)$ 
whose restriction to ${\rm SU}(2)$ is given by (\ref{diag SU2}). 
This ${\rm U}(2)$ action has the same orbits as the ${\rm SU}(2)$-action. 
The new 3-dimensional orbits do not appear from $\mathfrak{sp}(1)_{L}$, 
and this case is reduced to that of  (\ref{diag SU2}).

Suppose that $\mathfrak{su}(2) \subset \mathfrak{sp} (2)$ is induced from (\ref{small SU2}). 
In this case, we have
$
Z(\mathfrak{su} (2)) = \mathfrak{sp}(1)_{L} \oplus 
\left(
\begin{array}{cc}
O_{2}  & \\
        & \mathfrak{su}(2) 
\end{array} 
\right)_{R}.
$
This case is also reduced to that of  (\ref{small SU2}) in the same way.

Suppose that $\mathfrak{su}(2) \subset \mathfrak{sp} (2)$ is induced from (\ref{irr SU2}). 
In this case, we have
$Z(\mathfrak{su} (2)) = \mathfrak{sp}(1)_{L}$. 
This case is also reduced to that of  (\ref{irr SU2}) in the same way.

\subsection{Case $\dim \mathfrak{g} = 6$}
By the classification of the compact Lie algebras, 
$\mathfrak{g}$ is isomorphic to 
$\mathfrak{su}(2) \oplus \mathfrak{t}^{3}$ or 
$\mathfrak{su}(2) \oplus \mathfrak{su}(2)$.  
When $\mathfrak{g} \cong \mathfrak{su}(2) \oplus \mathfrak{t}^{3}$, 
we have $\mathfrak{g} \cong \mathfrak{t}^{1}_{L} \oplus (\mathfrak{su}(2) \oplus \mathfrak{t}^{2})_{R}$. 
Since  
there are no 2-dimensional commutative Lie subalgebras of $\mathfrak{sp}(2)$
which commute with $\mathfrak{su}(2)$ by Section \ref{Lie alg classification dim4}, 
this case does not occur. 

When $\mathfrak{g} \cong \mathfrak{su}(2) \oplus \mathfrak{su}(2)$, 
we have 
$G = {\rm Sp}(1)_{L} \cdot {\rm SU}(2)_{R}$ or 
$
\left(
\begin{array}{cc}
{\rm SU}(2)  & \\
                 &{\rm SU}(2) 
\end{array} 
\right)_{R}
$, 
which reduces to the case above.

Thus we only have to consider the orbits of 
(\ref{T3 action}), (\ref{right sp1 action}), (\ref{diag SU2}), (\ref{small SU2}), and 
(\ref{irr SU2}).


\subsection{$T^{3}$-orbits}

We classify associative submanifolds which are orbits of $T^{3}$ acting on $S^{7}$ as (\ref{T3 action}). 

\begin{prop}
Up to the ${\rm Sp}(1) {\rm Sp}(2)$-action, 
$T^{3} \cdot \frac{1}{2} {}^t\! (1, 1, 1, i)$ is the unique associative submanifold 
in the squashed $S^{7}$ 
which is an orbit of the $T^{3}$-action. 
\end{prop}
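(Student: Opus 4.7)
The plan is to reduce a generic $T^3$-orbit to a normal form using the $T^3$-action itself (and its normalizer in ${\rm Sp}(1){\rm Sp}(2)$), and then solve the pointwise associativity condition directly. By (\ref{T3 action}) and (\ref{basis of t3}) the fundamental vector fields at $p = (z_1, z_2, z_3, z_4) \in S^7$ are
\begin{align*}
F_1|_p = i(z_1, z_2, z_3, z_4),\quad F_2|_p = i(z_1, -z_2, 0, 0),\quad F_3|_p = i(0, 0, z_3, -z_4),
\end{align*}
which are linearly independent if and only if $z_1 z_2 z_3 z_4 \neq 0$; otherwise the orbit has dimension at most $2$. So I restrict to points with all $z_j \neq 0$. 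Using the $T^3$-action I can kill three of the four phases, so up to $T^3$ I may assume $p = (r_1, r_2, r_3, r_4 e^{i\phi})$ with $r_j > 0$, $\sum_j r_j^2 = 1$ and $\phi \in [0, 2\pi)$.

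By $T^3$-invariance of $\tilde{\varphi}$ and of the $F_i$, the orbit is associative if and only if $*\tilde{\varphi}(F_1, F_2, F_3, \cdot)_p = 0$ at the single point $p$ (Lemma \ref{equiv condi asso coasso}). From Lemma \ref{xi eta S7} I observe the crucial fact that $F_1 = -\xi_1$ is purely vertical, while the vertical components of $F_2, F_3$ are
\begin{align*}
(\eta_1, \eta_2, \eta_3)(F_2) &= (r_2^2 - r_1^2,\; 0,\; 2 r_1 r_2), \\
(\eta_1, \eta_2, \eta_3)(F_3) &= (r_4^2 - r_3^2,\; -2 r_3 r_4 \sin\phi,\; 2 r_3 r_4 \cos\phi).
\end{align*}
Plugging this into the formula for $\tilde{\varphi}$ from Proposition \ref{can var} and using $\omega_i = -g(\Phi_i(\cdot)^\top, (\cdot)^\top)$, the contraction $*\tilde{\varphi}(F_1, F_2, F_3, \cdot)_p$ becomes a polynomial 1-form in $r_1, r_2, r_3, r_4, \cos\phi, \sin\phi$. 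The verticality of $F_1$ kills all $\omega_i(F_1, \cdot)$ contributions, and in particular the $\eta_{123}$-part of $\tilde\varphi(F_1, F_2, F_3)$ reduces to $-\tfrac{27}{125}\cdot 4 r_1 r_2 r_3 r_4 \sin\phi$.

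To solve the resulting system I use the normalizer of $T^3$ in ${\rm Sp}(1){\rm Sp}(2)$ (the Weyl group of the maximal torus), which acts on the reduced parameter space by the permutations $z_1 \leftrightarrow z_2$, $z_3 \leftrightarrow z_4$, $(z_1,z_2) \leftrightarrow (z_3,z_4)$ and by sign changes. I expect these symmetries, together with the equations obtained from pairing $*\tilde\varphi(F_1,F_2,F_3,\cdot)$ against $\xi_2,\xi_3$ and against a horizontal frame, to force first $r_1 = r_2$ and $r_3 = r_4$, then $r_j = 1/2$ for all $j$, and finally $\sin\phi = 1$, giving $p = \frac{1}{2}{}^t(1, 1, 1, i)$. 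The main obstacle is the algebraic manipulation of this system: the 1-form condition has several scalar components, more than the number of unknowns, so care is needed to identify which combinations are genuinely independent and to rule out spurious branches whose associativity at $p$ does not extend to a genuine associative orbit.
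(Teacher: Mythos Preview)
Your setup matches the paper's: same fundamental vector fields, same slice (the paper writes it as $\Sigma = \{(x_1,x_2,x_3,x_4+iy_4): x_1,x_2,x_3 \geq 0\}$), and the same reduction to a pointwise condition by homogeneity. But the proposal is not yet a proof: you explicitly write ``I expect'' and name ``the algebraic manipulation of this system'' as the main obstacle without carrying it out. That manipulation is precisely the content of the argument, so at present there is a genuine gap.

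The paper closes this gap not by brute force but by exploiting $F_1 = -\xi_1$ much more fully than you do. Contracting $\xi_1$ into $*\tilde\varphi$ (using $i(\xi_1)d\eta_1 = 0$, $i(\xi_1)d\eta_2 = 2\eta_3$, $i(\xi_1)d\eta_3 = -2\eta_2$) collapses the whole $3$-form to a constant multiple of $\mathrm{Im}\bigl((\eta_2-i\eta_3)\wedge d(\eta_2+i\eta_3)\bigr)$. Since $\eta_2+i\eta_3 = -d\,{}^t\!z\cdot Jz$ and $d(\eta_2+i\eta_3) = -2(dz_{12}+dz_{34})$, contraction by $F_2,F_3$ then yields (up to a constant) $d\bigl(\mathrm{Im}(z_1 z_2 \bar z_3 \bar z_4)\bigr)$, restricted to $TS^7$. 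So associativity becomes the single condition that $p$ be critical for $\mathrm{Im}(z_1 z_2 \bar z_3 \bar z_4)$ on $S^7$. This gives $|z_1|=|z_2|=|z_3|=|z_4|=\tfrac12$ and $\mathrm{Re}(z_1 z_2 \bar z_3 \bar z_4)=0$ in one stroke, with no Weyl-group bookkeeping and no risk of spurious branches. I recommend you find this identity rather than push the componentwise system; the latter can be made to work but produces several coupled polynomial equations and invites exactly the difficulties you flag.

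Two smaller points. First, the sentence about ``the $\eta_{123}$-part of $\tilde\varphi(F_1,F_2,F_3)$'' is misplaced: the relevant form is $*\tilde\varphi$, which has no $\eta_{123}$ term, and the scalar $\tilde\varphi(F_1,F_2,F_3)$ is not what Lemma~\ref{equiv condi asso coasso} asks for. Second, you predict $\sin\phi = 1$, but the equations actually give $\sin\phi = \pm 1$ (equivalently $y_4 = \pm\tfrac12$); the two orbits must then be identified by an explicit element of ${\rm Sp}(2)$, which the paper does via $\left(\begin{smallmatrix}I_2 & 0 \\ 0 & K\end{smallmatrix}\right)$ with $K = \left(\begin{smallmatrix}0 & -i \\ -i & 0\end{smallmatrix}\right)$. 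Your normalizer argument may absorb this, but it needs to be checked.
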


\begin{rem}
The associative orbit $A_{1} = T^{3} \cdot \frac{1}{2} {}^t\! (1, 1, 1, i)$ is 
the Hopf lift of a $I_{1}^{'}$-holomorphic curve in $\mathbb{C}P^{3}$, 
where $I_{1}^{'}$ is defined by (\ref{almcpxstr CP3}).
We have 
\begin{align*}
A_{1} = 
\left \{
{}^t\! (z_{1}, z_{2}, z_{3}, z_{4}) \in S^{7}; 
\begin{array}{c}
|z_{1}|= |z_{2}| = |z_{3}| = |z_{4}|,  \\
{\rm Re}(z_{1}z_{2} \overline{z}_{3} \overline{z}_{4}) = 0, 
{\rm Im}(z_{1}z_{2} \overline{z}_{3} \overline{z}_{4}) < 0 \\
\end{array} 
\right \}, 
\end{align*}
which is a special Legendrian given in \cite{Harvey Lawson}
via ${}^t\! (z_{1}, z_{2}, z_{3}, z_{4}) \mapsto {}^t\! (z_{1}, z_{2}, \overline{z}_{3}, \overline{z}_{4})$. 
The inclusion (\ref{T3 action}) induces the metric 
$\frac{3}{5} (F^{1})^{2} + \frac{27}{50} (F^{2})^{2} + \frac{27}{50} (F^{3})^{2}$, 
where $\{ F^{i} \}$ is the dual of $\{ F_{i} \}$. 
\end{rem}

\begin{proof}
Fix $p_{0} = {}^t\! (z_{1}, z_{2}, z_{3}, z_{4}) \in S^{7}$ and set $A= T^{3} \cdot p_{0}$. 
Then the tangent space $T_{p_{0}} A$ is spanned by 
the vectors $F_{i}^{*}$ generated by $F_{i}$ in (\ref{basis of t3}):
\begin{align*}
(F_{1}^{*})_{p_{0}} &= i {}^t\! (z_{1}, z_{2}, z_{3}, z_{4}) = -\xi_{1}, \\
(F_{2}^{*})_{p_{0}} &= i {}^t\! (z_{1}, - z_{2}, 0, 0), \\
(F_{3}^{*})_{p_{0}} &= i {}^t\! (0, 0, z_{3}, -z_{4}). 
\end{align*}
By Lemma \ref{equiv condi asso coasso}, 
we consider the condition $* \tilde{\varphi}(F_{1}^{*}, F_{2}^{*}, F_{3}^{*}, \cdot)|_{T_{p_{0}}S^{7}} = 0$. 
We easily see that $-i(F_{1}^{*}) * \tilde{\varphi} = (3^{4}/5^{3})
{\rm Im}((\eta_{2} - i \eta_{3}) \wedge d(\eta_{2}+ i \eta_{3}))$.
From Lemma \ref{xi eta S7}, we have 
\begin{align*}
(\eta_{2} + i \eta_{3})(F_{2}^{*}) &= 2i z_{1} z_{2}, 
&(\eta_{2} + i \eta_{3})(F_{3}^{*}) &= 2i z_{3} z_{4}, \\
d(\eta_{2} + i \eta_{3})(F_{2}^{*}, \cdot) &= -2i d(z_{1} z_{2}), 
&d(\eta_{2} + i \eta_{3})(F_{3}^{*}, \cdot) &= -2i d(z_{3} z_{4}),  
\end{align*}
which implies that 
the condition $* \tilde{\varphi}(F_{1}^{*}, F_{2}^{*}, F_{3}^{*}, \cdot)|_{T_{p_{0}}S^{7}} = 0$ is equivalent to 
$d ({\rm Im}(z_{1} z_{2} \overline{z}_{3} \overline{z}_{4})) = 0$. 
The restriction of this form to $TS^{7}$ is given by  
$d ({\rm Im}(z_{1} z_{2} \overline{z}_{3} \overline{z}_{4})) 
- d ({\rm Im}(z_{1} z_{2} \overline{z}_{3} \overline{z}_{4}))(r \frac{\partial}{\partial r}) \frac{dr}{r}
= {\rm Re}(\sum_{j = 1}^{4} \zeta_{j} dz_{j})$, where 
$r \frac{\partial}{\partial r}$ is a position vector, $\frac{dr}{r}$ is its dual, and 
\begin{align*}
\left(
\begin{array}{c}
\zeta_{1}  \\ \zeta_{2} \\ \zeta_{3} \\ \zeta_{4} 
\end{array} 
\right)
=
\left(
\begin{array}{c}
-i z_{2} \overline{z}_{3} \overline{z}_{4} \\
-i z_{1} \overline{z}_{3} \overline{z}_{4}\\
i \overline{z}_{1} \overline{z}_{2} z_{4}\\
i \overline{z}_{1} \overline{z}_{2} z_{3}\\
\end{array} 
\right)
- 
4 {\rm Im}(z_{1} z_{2} \overline{z}_{3} \overline{z}_{4})
\left(
\begin{array}{c}
\overline{z}_{1}  \\ \overline{z}_{2} \\ \overline{z}_{3} \\ \overline{z}_{4} 
\end{array} 
\right).
\end{align*}
Thus we see that 
the condition $* \tilde{\varphi}(F_{1}^{*}, F_{2}^{*}, F_{3}^{*}, \cdot)|_{T_{p_{0}}S^{7}} = 0$ is 
equivalent to 
$\zeta_{j} (p_{0}) = 0$ for  $j=1, \cdots, 4.$ 
On the other hand, setting 
\begin{align*}
\Sigma = \{ {}^t\! (x_{1}, x_{2}, x_{3}, x_{4} + i y_{4})\in S^{7} \subset \mathbb{C}^{4}; 
x_{j}, y_{4}\in \mathbb{R}, x_{1}, x_{2}, x_{3} \geq 0 \}, 
\end{align*}
we have $S^{7} = T^{3} \cdot \Sigma$. 
Hence we may assume that $p_{0} \in \Sigma$ 
and $x_{1}, x_{2}, x_{3} \neq 0$ 
so that $T^{3} \cdot p_{0}$ is 3-dimensional. 
Then we can solve $\zeta_{j} = 0$ easily to obtain 
\begin{align*}
x_{1} = x_{2} = x_{3} = 1/2, \qquad x_{4} = 0, \qquad y_{4} = \pm 1/2. 
\end{align*} 
The $T^{3}$-orbit through ${}^t\! (1,1,1,i)/2$ is mapped to 
that through ${}^t\! (1,1,1,-i)/2$ by 
$
\left(
\begin{array}{cc}
I_{2} & 0 \\
0 & K \\
\end{array} 
\right) 
\in {\rm Sp}(2), $
where
$
K =  
\left(
\begin{array}{cc}
0 & -i \\
-i & 0 \\
\end{array} 
\right),
$
and 
we obtain the statement. 
\end{proof}


\subsection{${\rm SU}(2)$-orbits}

We consider the ${\rm SU}(2)$-orbits of 
(\ref{right sp1 action}), (\ref{diag SU2}), (\ref{small SU2}), or (\ref{irr SU2}). 
First, we introduce a useful lemma to study associative orbits.

\begin{lem} (\cite{Mashimo} Lemma 5.6.) \label{on frame SU2 orbit}
Let $(V, \rho)$ be an orthogonal representation of ${\rm SU}(2)$, 
$\langle \cdot, \cdot \rangle$ be an ${\rm SU}(2)$-invariant
inner product on $V$, 
and $S_{1} \subset V$ be the unit sphere. 
Let $M = {\rm SU}(2) \cdot p$ be a 3-dimensional orbit through $p\in S_{1}$.
Define the function $\lambda_{j}: M \rightarrow \mathbb{R}$ for $j = 1, 2, 3$ by 
\begin{align*}
\lambda_{j} = \langle (\rho_{*}(E_{j}))^{*}, (\rho_{*}(E_{j}))^{*} \rangle|_{M}, 
\end{align*}
where 
$\{ E_{j} \}$ is a basis of ${\rm su}(2)$ satisfying $[E_{i}, E_{i+1}] = 2 E_{i+2}$ for $i \in \mathbb{Z}/3$ and 
$(\rho_{*}(E_{j}))^{*}$ is a vector field on $V$ generated by $\rho_{*}(E_{j}) \in \mathfrak{gl}(V)$. 
Denote by $\{ E^{j} \}$ the dual 1-form on $M$ of $\{ (\rho_{*}(E_{j}))^{*}|_{M} \}$. 
Then there exists $g \in {\rm SU}(2)$, the induced metric $\langle \cdot, \cdot \rangle|_{M}$ 
is described as 
\begin{align} \label{met SU2 orbit}
\langle \cdot, \cdot \rangle|_{M} = \sum_{j=1}^{3} \lambda_{j} (E^{j})^{2}, 
\end{align}
at $g \cdot p \in M$. Moreover, 
$(M, \langle \cdot, \cdot \rangle|_{M})$ 
is a space of constant curvature $k$ if and only if 
$\lambda_{1} = \lambda_{2} = \lambda_{3} = 1/k$. 
\end{lem}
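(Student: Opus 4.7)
The plan is to encode both assertions in terms of a symmetric bilinear form on the Lie algebra. Define
\[
B_q(E,E') := \langle (\rho_*(E))^*_q,\, (\rho_*(E'))^*_q\rangle
\]
for $q\in M$ and $E,E'\in\mathfrak{su}(2)$. Since $\dim M=\dim\mathfrak{su}(2)=3$ and $M$ is an orbit, the evaluation map $E\mapsto (\rho_*(E))^*_q$ is an isomorphism $\mathfrak{su}(2)\to T_qM$, so $B_q$ is positive definite. The formula (\ref{met SU2 orbit}) at $g\cdot p$ amounts exactly to $B_{gp}(E_i,E_j)=0$ for $i\neq j$, the diagonal entries then being $\lambda_j(g\cdot p)$ by definition.

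The first main step is equivariance: because $\rho$ is orthogonal, the identity $(\rho_*(E))^*_{h\cdot q} = d\rho(h)\cdot(\rho_*({\rm Ad}(h^{-1})E))^*_q$ yields
\[
B_{hq}(E,E') = B_q({\rm Ad}(h^{-1})E,\,{\rm Ad}(h^{-1})E').
\]
So finding $g$ with $B_{gp}$ diagonal in $\{E_j\}$ is equivalent to finding $g$ so that $\{{\rm Ad}(g^{-1})E_j\}$ is $B_p$-orthogonal. I would then apply simultaneous diagonalization of two symmetric forms on $\mathfrak{su}(2)$: the negative Killing form $-\kappa$ (positive definite) and $B_p$. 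This produces a $\kappa$-orthonormal basis diagonalizing $B_p$; after rescaling by the common $\kappa$-norm of the $E_j$ (fixed by $[E_i,E_{i+1}]=2E_{i+2}$) and reordering, one obtains a basis $\{F_j\}$ with the same $\kappa$-norms and orientation as $\{E_j\}$. Since ${\rm Ad}:{\rm SU}(2)\to{\rm SO}(\mathfrak{su}(2),\kappa)\cong{\rm SO}(3)$ is surjective and acts transitively on oriented $\kappa$-orthonormal bases, there exists $g\in{\rm SU}(2)$ with ${\rm Ad}(g^{-1})E_j=F_j$; at this $g$, the equivariance formula gives $B_{gp}(E_i,E_j)=B_p(F_i,F_j)=0$ for $i\neq j$.

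For the constant-curvature characterization, the induced metric on $M$ is left-invariant under the transitive ${\rm SU}(2)$-action, hence pulls back to a left-invariant metric on ${\rm SU}(2)$ (the stabilizer is discrete since $\dim M=3$). A left-invariant metric on ${\rm SU}(2)$ has constant sectional curvature if and only if it is bi-invariant, which for the simple algebra $\mathfrak{su}(2)$ means it is a positive scalar multiple of $-\kappa$; in the basis $\{E_j\}$ this reads $\lambda_1=\lambda_2=\lambda_3=:\lambda$. A standard computation (or identification with a round 3-sphere) then yields sectional curvature $1/\lambda$, i.e.\ $k=1/\lambda_j$. The main obstacle I foresee is bookkeeping in this last step: tracking the precise normalization between $\kappa$ and the unit-round $S^3$ metric so that the constant comes out exactly as $1/\lambda_j$ rather than some numerical multiple. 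The diagonalization part itself reduces to pure linear algebra together with the surjectivity of ${\rm Ad}$, and the equivariance of $B_q$ is immediate from the orthogonality of $\rho$.
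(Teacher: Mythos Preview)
The paper does not actually prove this lemma: it is quoted from \cite{Mashimo} (Lemma~5.6 there) and used as a black box, so there is no in-paper argument to compare against. Your sketch is a correct and self-contained proof. The reduction to the bilinear form $B_q$ on $\mathfrak{su}(2)$ and the equivariance identity $B_{hq}={\rm Ad}(h^{-1})^*B_q$ are exactly the right mechanism; simultaneous diagonalization of $B_p$ against the Killing form, together with surjectivity of ${\rm Ad}:{\rm SU}(2)\to{\rm SO}(\mathfrak{su}(2),-\kappa)$, gives the diagonalizing $g$ cleanly. For the curvature statement, your appeal to the fact that a left-invariant metric on ${\rm SU}(2)$ has constant sectional curvature iff it is bi-invariant is standard (e.g.\ via Milnor's classification, or by noting that a constant-curvature $3$-manifold has $6$-dimensional isometry group, forcing right-invariance as well). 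The normalization you were worried about is fine: with $[E_i,E_{i+1}]=2E_{i+2}$ and $|E_j|^2=\lambda$, the orthonormal frame $e_j=E_j/\sqrt{\lambda}$ satisfies $[e_i,e_{i+1}]=(2/\sqrt{\lambda})e_{i+2}$, and the bi-invariant formula $K(e_i,e_j)=\tfrac14|[e_i,e_j]|^2$ gives exactly $1/\lambda$.
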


\begin{rem}(\cite{Mashimo} Remark 5.4.) \label{permute frame}
There exists $g' \in {\rm SU}(2)$ satisfying (\ref{met SU2 orbit}) and 
$\lambda_{1}(g') = \lambda_{a}(g), \lambda_{2}(g') = \lambda_{b}(g), \lambda_{3}(g') = \lambda_{c}(g)$, 
where 
$\{ a, b, c\}$ is any permutation of $\{1, 2, 3 \}$. 
Thus 
we can ``permute" the $\lambda_{j}$. 
\end{rem}

\subsubsection{${\rm SU}(2)$-orbits 1}

If an ${\rm SU}(2)$-action is given by (\ref{right sp1 action}), 
the orbit is the intersection of a quaternionic plane and $S^{7}$, 
which is an obvious totally geodesic associative submanifold.

\subsubsection{${\rm SU}(2)$-orbits 2}

Consider the ${\rm SU}(2)$-action given by (\ref{diag SU2}). 
Let $A$ be an  ${\rm SU}(2)$-orbit through $p_{0} = {}^t\! (z_{1}, z_{2}, z_{3}, z_{4})$. 
Then the tangent space to $A$ at $p_{0}$ is spanned by 
the vectors $E_{i}^{*}$ generated by $E_{i}$ in (\ref{basis of su2}):
\begin{align*}
(E_{1}^{*})_{p_{0}} &=  {}^t\! (z_{3}, z_{4}, -z_{1}, -z_{2}), \\
(E_{2}^{*})_{p_{0}} &=  i {}^t\! (- z_{3}, z_{4}, - z_{1}, z_{2}), \\
(E_{3}^{*})_{p_{0}} &=  i {}^t\! (- z_{1}, z_{2}, z_{3}, -z_{4}). 
\end{align*}
We easily see that 
$g(E_{i}^{*}, E_{j}^{*})_{p_{0}} = \delta_{i j}$, 
where $g$ is the standard metric on $S^{7}$. 
Then from Lemma \ref{on frame SU2 orbit}, 
 $A$ is a constant curvature 1 submanifold of $(S^{7}, g)$. 
Thus $A$ is of the form $V \cap S^{7}$, 
where $V \subset \mathbb{R}^{8}$ is a 4-plane. 
These associative submanifolds are classified by Theorem \ref{plane asso}.


\subsubsection{${\rm SU}(2)$-orbits 3}

Consider the ${\rm SU}(2)$-action given by (\ref{small SU2}). 
Let $A$ be an  ${\rm SU}(2)$-orbit through $p_{0} = {}^t\! (z_{1}, z_{2}, z_{3}, z_{4})$. 
By the ${\rm SU}(2)$-action, 
we may assume that 
$p_{0} = {}^t\! (x_{1}, 0, z_{3}, z_{4})$ 
where $x_{1} >0, z_{3}, z_{4} \in \mathbb{C}$.  
Then the tangent space to $A$ at $p_{0}$ is spanned by 
the vectors $E_{i}^{*}$ generated by $E_{i}$ in (\ref{basis of su2}):
\begin{align*}
(E_{1}^{*})_{p_{0}} &=  {}^t\! (0, -x_{1}, 0, 0), \\
(E_{2}^{*})_{p_{0}} &=  {}^t\! (0, i x_{1}, 0, 0), \\
(E_{3}^{*})_{p_{0}} &=  {}^t\! (i x_{1}, 0, 0, 0). 
\end{align*}
We compute 
\begin{align*}
(\eta_{i} (E_{j}^{*})) &= 
\left(
\begin{array}{ccc}
0          & 0          & -x_{1}^{2}\\
-x_{1}^{2}& 0          & 0          \\
0          & -x_{1}^{2}& 0 \\
\end{array} 
\right), \\
\left(
\begin{array}{c}
d \eta_{j} (E_{1}^{*}, E_{2}^{*}) \\
d \eta_{j} (E_{1}^{*}, E_{3}^{*}) \\
d \eta_{j} (E_{2}^{*}, E_{3}^{*}) \\
\end{array} 
\right)
&=
\left(
\begin{array}{ccc}
x_{1}^{2} & 0          & 0\\
0          & 0          & -x_{1}^{2} \\
0          & -x_{1}^{2}& 0 \\
\end{array} 
\right), 
\end{align*}
\begin{align*}
&\left(
i( E_{i}^{*}) d \eta_{1}
\right)
=
2 x_{1} 
\left(
\begin{array}{c}
{\rm Im}(d z_{2})  \\
{\rm Re}(d z_{2})  \\
{\rm Re}(d z_{1}) 
\end{array} 
\right), 
\qquad
\left(
i( E_{i}^{*}) d (\eta_{2} + i \eta_{3} )
\right)
=
2 x_{1} 
\left(
\begin{array}{c}
 -dz_{1}  \\ 
i dz_{1}  \\
-i dz_{2} 
\end{array} 
\right),\\
&\sum_{i=1}^{3} d \eta_{i} (E_{1}^{*}, E_{2}^{*}, E_{3}^{*}, \cdot) = 12 x_{1}^{3} d x_{1}, \qquad
d (\eta_{123}) = 2 x_{1}^{5} d x_{1}.
\end{align*}
Since 
$* \tilde{\varphi} = \frac{27}{25} (\frac{1}{8} \sum_{i=1}^{3} (d \eta_{i})^{2} + \frac{4}{5} d (\eta_{123}))$, 
we obtain
$* \tilde{\varphi}  (E_{1}^{*}, E_{2}^{*}, E_{3}^{*}, \cdot) = \frac{5}{54} x_{1}^{3} (15 + 16 x_{1}^{2}) d x_{1}$. 
The restriction of $d x_{1}$ to $T S^{7}$ is given by 
\begin{align*}
d x_{1} - d x_{1} \left(r \frac{\partial}{\partial r} \right) \frac{d r}{r} = d x_{1} 
- x_{1} \left( x_{1} d x_{1} + {\rm Re} (z_{3} d z_{3} + z_{4} d z_{4}) \right),
\end{align*}
where 
$r \frac{\partial}{\partial r}$ is a position vector and  $\frac{dr}{r}$ is its dual. 
This implies that 
 $* \tilde{\varphi}(E_{1}, E_{2}, E_{3}, \cdot)|_{T_{p_{0}}S^{7}} = 0$ is 
equivalent to 
$x_{1} = 1, z_{3}=z_{4} = 0$, 
and the resulting associative submanifold is 
$\{ (z_{1}, z_{2}, 0, 0) \in \mathbb{C}^{4}; |z_{1}|^{2} + |z_{2}|^{2} = 1\}$.


\subsubsection{${\rm SU}(2)$-orbits 4}

For the ${\rm SU}(2)$-action given by (\ref{irr SU2}), 
we obtain the following. 

\begin{prop} \label{classification irr SU2}
Let $A$ be an associative submanifold in the squashed $S^{7}$ 
which is an orbit of the ${\rm SU}(2)$-action given in (\ref{irr SU2}). 
Then up to the ${\rm Sp}(1) {\rm Sp}(2)$-action, 
\begin{align*}
A = A_{2}:= {\rm SU}(2) \cdot {}^t\! (1, 0, 0, 0) \quad
\mbox{ or } \quad
A_{3} := {\rm SU}(2) \cdot {}^t\! (0, 0, 1, 0).  
\end{align*}
\end{prop}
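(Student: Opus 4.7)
The plan is to follow the strategy of the preceding ${\rm SU}(2)$-orbit subsections essentially verbatim. At a base point $p_0 = {}^t(z_1, z_2, z_3, z_4) \in S^7$, I compute the fundamental vector fields $E_j^*$ generated by the basis $\{E_1, E_2, E_3\}$ of $\mathfrak{su}(2)$ from (\ref{basis of su2}), now using the infinitesimal action (\ref{lie alg of irr SU2}). By Lemma \ref{equiv condi asso coasso} and ${\rm SU}(2)$-equivariance, $A = {\rm SU}(2) \cdot p_0$ is associative if and only if the 1-form $*\tilde{\varphi}(E_1^*, E_2^*, E_3^*, \cdot)|_{T_{p_0}S^7}$ vanishes at a single chosen point of each orbit.

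Before solving, I would use the ${\rm SU}(2)$-action to normalize $p_0$. Since (\ref{irr SU2}) is irreducible and generic stabilizers are finite, orbits on $S^7$ are generically 3-dimensional and the orbit space is 4-dimensional; accordingly I would pick a 4-parameter transversal slice (viewing $p_0$ as a binary cubic, one can e.g.\ use ${\rm SU}(2)$ to place a root of the cubic at a chosen point, eliminating three real parameters). Lemma \ref{on frame SU2 orbit} together with Remark \ref{permute frame} then allows me to permute/arrange the frame $\{E_j^*\}$ so that the functions $\lambda_j = \langle E_j^*, E_j^*\rangle$ are in a convenient order at $p_0$, reducing the bookkeeping further. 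I then evaluate $\eta_i(E_j^*)$ and $d\eta_i(E_j^*, E_k^*)$ at $p_0$ from Lemma \ref{xi eta S7}, assemble $*\tilde{\varphi}(E_1^*, E_2^*, E_3^*, \cdot)$ via the explicit formula in Proposition \ref{can var}, and subtract the radial component (using the position vector and its dual $dr/r$ as in the proof for ``${\rm SU}(2)$-orbits 3'') to restrict to $T_{p_0}S^7$.

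The associativity condition is then a system of polynomial equations in the slice parameters, which I would solve case by case. I expect precisely two families of solutions, which after conjugation by ${\rm Sp}(1){\rm Sp}(2)$ give the orbits $A_2 = {\rm SU}(2)\cdot{}^t(1,0,0,0)$ and $A_3 = {\rm SU}(2)\cdot{}^t(0,0,1,0)$; one can cross-check these are genuinely associative using Proposition \ref{example asso psehol}, since Remark \ref{property homog asso} identifies them as Hopf lifts of $I_1'$-holomorphic curves (a horizontal holomorphic curve and a null-torsion curve in $\mathbb{C}P^3$). The main obstacle is purely computational: the matrices in (\ref{lie alg of irr SU2}) are dense, so expanding $d\eta_i(E_j^*,E_k^*)$ and $(d\eta_i)^2$ produces many monomials in $(z_1,z_2,z_3,z_4)$. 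The key to making this tractable is exploiting symmetry aggressively—using a well-chosen normal form for $p_0$ (so that as many $z_i$ as possible vanish), the ${\rm U}(1)$ phase freedom, and the cyclic symmetry between the indices $i=1,2,3$—so that the vanishing of the 1-form decouples into a few equations which isolate the two candidate orbits.
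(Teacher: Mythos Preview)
Your outline is sound but misses the structural simplifications that make the computation feasible. The paper does not expand $*\tilde\varphi(E_1^*,E_2^*,E_3^*,\cdot)$ directly. Instead it uses that the embedding (\ref{irr SU2}) lands in ${\rm Sp}(2)$, so each $\eta_i$ is ${\rm SU}(2)$-invariant and $L_{E_j^*}\eta_i=0$. From $d\eta_i(E_j^*,\cdot)=-d(\eta_i(E_j^*))$ and $[E_j^*,E_{j+1}^*]=-2E_{j+2}^*$ one obtains
\[
\sum_i (d\eta_i)^2(E_1^*,E_2^*,E_3^*,\cdot)=2\,d\!\left(\sum_{i,j}\eta_i(E_j^*)^2\right),\qquad
d(\eta_{123})(E_1^*,E_2^*,E_3^*,\cdot)=-\,d\bigl(\det(\eta_i(E_j^*))\bigr).
\]
A direct computation then gives the identity $\sum_{i,j}\eta_i(E_j^*)^2\equiv 9$, so the first term vanishes identically and the associativity condition collapses to the single equation $d(\det M)=0$ with $M=(\eta_i(E_j^*))$. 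This is what replaces your ``system of polynomial equations in the slice parameters''; without it the expansion you describe would be extremely heavy.

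For the normalization, the paper does not use the ${\rm SU}(2)$-action on binary cubics. It first uses Lemma \ref{on frame SU2 orbit} to make $\{E_j^*\}$ $g$-orthogonal at $p_0$, which yields the relations $z_1\bar z_4-\bar z_2 z_3=0$ and ${\rm Im}(z_1\bar z_3+\bar z_2 z_4)=0$, and then splits on whether the $\lambda_j$ are distinct or not. In the distinct case these force $(z_3,z_4)=\mu(z_1,z_2)$ with $\mu\in\mathbb R$, and the \emph{commuting} ${\rm Sp}(1)_L$-action (not the irreducible ${\rm SU}(2)$) reduces to $p_0=\tfrac{1}{\sqrt{\mu^2+1}}{}^t(1,0,\mu,0)$; one then checks $d(\det M)(v)\neq 0$ for a single tangent vector $v$. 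In the coincident case the orthogonality relations force $z_1=z_2=0$ or $z_3=z_4=0$, and ${\rm Sp}(1)_L$ reduces to ${}^t(1,0,0,0)$ or ${}^t(0,0,1,0)$, after which associativity is verified by evaluating $\tilde\varphi(E_1^*,E_2^*,E_3^*)$ directly. Your plan would eventually get there, but the invariance trick and the use of ${\rm Sp}(1)_L$ are the ideas you are missing.
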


\begin{rem}
The associative orbit $A_{2}$ is 
the Hopf lift of 
a horizontal holomorphic curve 
\begin{align*}
\{ [a^{3}: b^{3}: \sqrt{3}ab^{2}: \sqrt{3}a^{2}b] \in \mathbb{C}P^{3}; a, b \in \mathbb{C}, |a|^{2} + |b|^{2} = 1 \}
\end{align*}
in $\mathbb{C}P^{3}$. This is 
a degree 3 $\mathbb{C}P^{1}$ in $\mathbb{C}P^{3}$ of the constant curvature 
called the Veronese curve. 
The associative orbit $A_{3}$ is 
the Hopf lift of 
a null-torsion $I_{1}^{'}$-holomorphic curve in $\mathbb{C}P^{3}$, 
which is defined in Definition \ref{null-torsion def}. 
The inclusion (\ref{irr SU2}) induces 
$\tilde{g}|_{A_{2}} = \frac{27}{25} (5 (E^{1})^{2} + 5 (E^{2})^{2} + 3 (E^{3})^{2})$ 
and $\tilde{g}|_{A_{3}} = \frac{9}{25} (19 (E^{1})^{2} + 19 (E^{2})^{2} + (E^{3})^{2})$,  
where we use the notation of Lemma \ref{on frame SU2 orbit}. 
\end{rem}

\begin{rem} 
Set $A_{2}(a, b):= {\rm SU}(2) \cdot {}^t\! (a, b, 0, 0)$ 
and $A_{3}(a, b):= {\rm SU}(2) \cdot {}^t\! (0, 0, a, b)$ 
for $a, b \in \mathbb{C}, |a|^{2} + |b|^{2} = 1$. 
Then by the action of $a +bj \in {\rm Sp}(1)_{L}$, 
$A_{j}$  is congruent to $A_{j}(a, b) (j = 2, 3)$. 
Via ${}^t\! (z_{1}, z_{2}, z_{3}, z_{4}) \mapsto {}^t\! (z_{1}, z_{4}, z_{3}, z_{2})$, 
$A_{2}(\frac{1}{\sqrt{2}}, \frac{1}{\sqrt{2}})$ is  
special Legendrian given by \cite{Marshall}. 
\end{rem}

\begin{proof}[Proof of Proposition \ref{classification irr SU2}]
Let $A$ be an  ${\rm SU}(2)$-orbit through $p_{0} = {}^t\! (z_{1}, z_{2}, z_{3}, z_{4})$. 
Then the tangent space to $A$ at $p_{0}$ is spanned by 
the vectors $E_{i}^{*}$ generated by $E_{i}$ in (\ref{basis of su2}):
\begin{align*}
(E_{1}^{*})_{p_{0}} &=  {}^t\! (\sqrt{3} z_{4}, -\sqrt{3} z_{3}, \sqrt{3} z_{2} -2 z_{4}, -\sqrt{3} z_{1} + 2 z_{3}),\\ 
(E_{2}^{*})_{p_{0}} &=  {}^t\! (\sqrt{3} i z_{4}, \sqrt{3} i z_{3}, \sqrt{3} i z_{2} +2i z_{4}, \sqrt{3}i z_{1} + 2i z_{3}), \\
(E_{3}^{*})_{p_{0}} &=  {}^t\! (3i z_{1}, -3i z_{2}, -i z_{3}, i z_{4}). 
\end{align*}
Since ${\rm SU}(2) \subset {\rm Sp}(2)$-action preserves $\eta_{j}$, 
we have $L_{E_{j}^{*}} \eta_{i} = 
d \eta_{i} (E_{j}^{*}, \cdot) + d (\eta_{i}(E_{j}^{*})) = 0$. 
Then by the equation 
$
[E_{j}^{*}, E_{j+1}^{*}] = -2 E_{j+2}^{*}$ 
for 
$j \in \mathbb{Z}/3, 
$
we have 
\begin{align} \label{computation su2 4}
\sum_{i=1}^{3} (d \eta_{i})^{2} (E_{1}^{*}, E_{2}^{*}, E_{3}^{*}, \cdot) \nonumber
&= 
2 
\sum_{i,j=1}^{3} d (\eta_{i} (E_{j}^{*})^{2}), \\
d (\eta_{123}) (E_{1}^{*},E_{2}^{*},E_{3}^{*}, \cdot)
&=
-
d (\eta_{123} (E_{1}^{*},E_{2}^{*},E_{3}^{*})). 
\end{align}
We compute 
\begin{align*}
\eta_{1} (E_{2}^{*}) + i \eta_{1} (E_{1}^{*}) 
&= -2 \sqrt{3} (\overline{z}_{1} z_{4} + z_{2} \overline{z}_{3}) -4 z_{3} \overline{z}_{4}, \\
\eta_{1} (E_{3}^{*})
&= -3 |z_{1}|^{2} + 3 |z_{2}|^{2} +  |z_{3}|^{2} -  |z_{4}|^{2}, \\
(\eta_{2} + i \eta_{3}) (E_{1}^{*}) 
&= 2 \sqrt{3} (z_{1} z_{3} + z_{2} z_{4}) -2 ( z_{3}^{2} + z_{4}^{2}), \\
(\eta_{2} + i \eta_{3}) (E_{2}^{*}) 
&= 2 \sqrt{3} i (- z_{1} z_{3} +  z_{2} z_{4}) +2i (- z_{3}^{2} + z_{4}^{2}), \\
(\eta_{2} + i \eta_{3}) (E_{3}^{*}) 
&=
6i z_{1} z_{2} -2i z_{3} z_{4}, 
\end{align*}
Then 
we have
$
\sum_{i,j=1}^{3} \eta_{i} (E_{j}^{*})^{2} = 9 
$
and 
$\sum_{i=1}^{3} (d \eta_{i})^{2} (E_{1}^{*}, E_{2}^{*}, E_{3}^{*}, \cdot) = 0$ 
by (\ref{computation su2 4}). 
Since 
$* \tilde{\varphi} = \frac{27}{25} (\frac{1}{8} \sum_{i=1}^{3} (d \eta_{i})^{2} + \frac{4}{5} d (\eta_{123}))$, 
the condition $* \tilde{\varphi}(E_{1}^{*}, E_{2}^{*}, E_{3}^{*}, \cdot) = 0$ is equivalent to 
\begin{align*}
d (\det M) =0, 
\end{align*}
where $M = (\eta_{i} (E_{j}^{*}))$.

Now, we use Lemma \ref{on frame SU2 orbit}. 
We may assume that 
$\{ E_{1}^{*}, E_{2}^{*}, E_{3}^{*} \}$ are mutually orthogonal at 
$p_{0} = {}^t\! (z_{1}, z_{2}, z_{3}, z_{4})$ with respect to $g$. 
Then we have 
\begin{align} \label{SU2-2 orthog}
z_{1} \overline{z}_{4} - \overline{z}_{2} z_{3} =0, \qquad 
{\rm Im} (z_{1} \overline{z}_{3} + \overline{z}_{2} z_{4}) =0. 
\end{align}
Setting 
\begin{align*}
\lambda_{1} &= |E_{1}^{*}|^{2} = 4 (|z_{3}|^{2} + |z_{4}|^{2}) 
- 4 \sqrt{3} {\rm Re}(z_{1} \overline{z}_{3} + \overline{z}_{2} z_{4}) + 3, \\
\lambda_{2} &= |E_{2}^{*}|^{2} = 4 (|z_{3}|^{2} + |z_{4}|^{2}) 
+ 4 \sqrt{3} {\rm Re}(z_{1} \overline{z}_{3} + \overline{z}_{2} z_{4}) + 3, \\
\lambda_{3} &= |E_{3}^{*}|^{2} = 8 (|z_{1}|^{2} + |z_{2}|^{2}) + 1.
\end{align*}
We consider the following two cases 
as the proof of Lemma 5.7 in \cite{Lotay3}: 
\begin{center}
(1) all of the $\lambda_{j}$ are distinct, \qquad
(2) at least two of the $\lambda_{j}$ are equal. 
\end{center}
Consider the case (1). 
Since we can permute the $\lambda_{j}$ by Remark \ref{permute frame}, 
we may assume that $\lambda_{3} < \lambda_{1} < \lambda_{2}$. 
The inequality $\lambda_{1} < \lambda_{2}$ implies that 
${\rm Re}(z_{1} \overline{z}_{3} + z_{2} \overline{z}_{4}) > 0$. 
Thus we have $(z_{1}, z_{2}), (z_{3}, z_{4}) \neq 0$. 
From (\ref{SU2-2 orthog}), there exists $\mu \in \mathbb{R}$ satisfying 
\begin{align}
z_{3} = \mu z_{1}, \qquad z_{4} = \mu z_{2}.           \label{SU2_2 mu}
\end{align}
Note that $\lambda_{3} < \lambda_{1}$ is equivalent to $\mu > \sqrt{3}$.
Moreover, 
since the ${\rm Sp}(1)_{L}$-action commutes the ${\rm Sp}(2)_{R}$-action 
and 
$
{}^t\! (z_{1}, z_{2}, \mu z_{1}, \mu z_{2}) 
$
is mapped to 
$\frac{1}{\sqrt{\mu^{2}+1}} {}^t\! (1, 0, \mu, 0)$ 
by 
$(\overline{z}_{1} - z_{2}j)/ \sqrt{ |z_{1}|^{2} + |z_{2}|^{2}} \in {\rm Sp}(1)_{L}$, 
we may assume that 
$p_{0} = \frac{1}{\sqrt{\mu^{2}+1}} {}^t\! (1, 0, \mu, 0)$. 

Set  
$v = {}^t\! (- \mu, 0, 1, 0) \in T_{p_{0}} S^{7}$. Then we compute 
\begin{align*}
M_{p_{0}} &= 
\frac{1}{\mu^{2}+1} 
\left(
\begin{array}{ccc}
0                                & 0                                 & \mu^{2} -3   \\
2 \mu (- \mu + \sqrt{3}) & 0                                & 0 \\
0                                & -2 \mu ( \mu + \sqrt{3}) & 0 \\
\end{array} 
\right), \\
(v(M))_{p_{0}}
&=
\frac{1}{\sqrt{\mu^{2}+1}} 
\left(
\begin{array}{ccc}
0                                                 & 0                                               & 8 \mu  \\
-2 (\sqrt{3} \mu -1)(\mu + \sqrt{3})  & 0                                               & 0 \\
0                                                 & 2 (\sqrt{3} \mu +1) (\mu - \sqrt{3}) & 0 \\
\end{array} 
\right), 
\end{align*}
where $v(M)$ is the derivative of $M$ with respect to $v$. 
Then we have 
\begin{align*}
d (\det M)_{p_{0}} (v) &= \det M_{p_{0}} \cdot {\rm tr} (v(M) M^{-1})_{p_{0}} \\
&=
24 \mu  (\mu^{2}-3) (3 \mu^{2}-1) (\mu^{2}+1)^{-5/2} > 0.
\end{align*}
Thus we have no associative ${\rm SU}(2)$-orbits in the case (1).

Next, consider the case (2). 
We may assume that $\lambda_{1} = \lambda_{2}$ by Remark \ref{permute frame}.
Then we have ${\rm Re}(z_{1} \overline{z}_{3} + z_{2} \overline{z}_{4}) = 0$, 
and (\ref{SU2-2 orthog}) implies that 
\begin{align*}
z_{1} \overline{z}_{4} - \overline{z}_{2} z_{3} = 0, \qquad
z_{1} \overline{z}_{3} + \overline{z}_{2} z_{4} = 0.
\end{align*}
Thus, 
\begin{align*}
z_{1} z_{2} \overline{z}_{3} \overline{z}_{4} &= |z_{2} z_{3}|^{2} = - |z_{2} z_{4}|^{2} = 0, \\
\overline{z}_{1} \overline{z}_{2} z_{3} z_{4} &= |z_{1} z_{4}|^{2} = - |z_{1} z_{3}|^{2} = 0. 
\end{align*}
We deduce that 
either 
$z_{1} = z_{2} = 0$ or $z_{3} = z_{4} = 0$. 
Since ${}^t\! (z_{1}, z_{2}, 0, 0)$
(resp. ${}^t\! (0, 0, z_{3}, z_{4})$)
is mapped to 
${}^t\! (1, 0, 0, 0)$ (resp. ${}^t\! (0, 0, 1, 0)$)
by $\overline{z}_{1} - z_{2} j$(resp. $\overline{z}_{3} - z_{4} j$) $\in {\rm Sp}(1)_{L}$, 
we only have to consider at $p_{0} = {}^t\! (1, 0, 0, 0)$ or ${}^t\! (0, 0, 1, 0)$. 

At $p_{0} = {}^t\! (1, 0, 0, 0)$, we have 
\begin{align} \label{vfs A2}
E_{1}^{*} = {}^t\!(0, 0, 0, - \sqrt{3}), \qquad
E_{2}^{*} = {}^t\!(0, 0, 0, \sqrt{3} i), \qquad
E_{3}^{*} = {}^t\!(3i, 0, 0, 0) = -3 \xi_{1}, 
\end{align}
which are also orthogonal to each other with respect to $\tilde{g}$ 
and $\tilde{\varphi}(E_{1}^{*}, E_{2}^{*}, E_{3}^{*}) = -243/25 = 
- |E_{1}^{*}|_{\tilde{g}} |E_{2}^{*}|_{\tilde{g}} |E_{3}^{*}|_{\tilde{g}}$. 
At $p_{0} = {}^t\! (0, 0, 1, 0)$, we have 
\begin{align} \label{vfs A3}
E_{1}^{*} = {}^t\!(0, - \sqrt{3}, 0, 2), \qquad
E_{2}^{*} = {}^t\!(0, \sqrt{3} i, 0, 2i), \qquad
E_{3}^{*} = {}^t\!(0, 0, -i, 0) = \xi_{1},
\end{align}
which are also orthogonal to each other with respect to $\tilde{g}$ 
and $\tilde{\varphi}(E_{1}^{*}, E_{2}^{*}, E_{3}^{*}) = 3^{3} \cdot 19/5^{3} 
= |E_{1}^{*}|_{\tilde{g}} |E_{2}^{*}|_{\tilde{g}} |E_{3}^{*}|_{\tilde{g}}$. 
Thus we see that 
both ${\rm SU}(2)$-orbits are associative. 
\end{proof}


\section{Deformations of homogeneous associative submanifolds}

We study the deformations of homogeneous associative submanifolds 
in the squashed $S^{7}$. 
We apply the same method of \cite{K deform}
in the standard $S^{7}$.

\begin{prop} \cite{K deform}  \label{deform_asso_NP}
Let $(Y, \varphi, g)$ be a nearly parallel $G_{2}$-manifold, and 
$A^{3} \subset Y$ be an associative submanifold. 
Denote by $\nu$  the normal bundle of $A$ in $Y$ 
and by $\nabla^{\perp_{A}}$ the connection on $\nu$ induced 
by the Levi-Civita connection $\nabla$ of $(Y, g)$.  

Taking any local orthonormal frame $\{ e_{1}, e_{2}, e_{3} \}$ of $TA$, 
define the operator 
$D : C^{\infty}(A, \nu) \rightarrow C^{\infty}(A, \nu)$ by 
\begin{align*}
D \psi := \sum_{i = 1}^{3} e_{i} \times \nabla^{\perp_{A}}_{e_{i}} \psi. 
\end{align*}
Then the vector space of all infinitesimal associative deformations 
of $A^{3} \hookrightarrow Y$ is identified with 
$
\{ \psi \in C^{\infty}(A, \nu) ; D \psi = -\psi \}. 
$
\end{prop}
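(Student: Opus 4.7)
The plan is to linearize the associative condition from Lemma~\ref{equiv condi asso coasso}. Let $\psi \in C^\infty(A,\nu)$, set $\phi_t(a) = \exp_a(t\psi(a))$ and $A_t = \phi_t(A)$, and fix a local orthonormal frame $\{e_1, e_2, e_3\}$ of $TA$ oriented so that $e_3 = e_1 \times e_2$, whence $\varphi(e_1,e_2,e_3) = 1$. The geodesic variation identity $\frac{D}{dt}\big|_{t=0}d\phi_t(e_i) = \nabla_{e_i}\psi$ gives the linearization
\begin{align*}
\sum_{i=1}^3 *\varphi\bigl(e_1,\dots,\nabla_{e_i}\psi,\dots,e_3,w\bigr) + (\nabla_\psi *\varphi)(e_1,e_2,e_3,w) = 0
\end{align*}
of the condition $*\varphi(d\phi_t e_1, d\phi_t e_2, d\phi_t e_3, w) = 0$. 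Since $A$ is already associative, the identity holds automatically when $w$ is tangent to $A$, so it suffices to impose the equation for $w = v \in \nu_a$.

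For the first sum, decompose $\nabla_{e_i}\psi = (\nabla_{e_i}\psi)^\top + \nabla^{\perp_A}_{e_i}\psi$. A tangential component $\alpha^j_i e_j$ inserted in the $i$-th slot either produces a repeated argument ($j \neq i$) or a scalar multiple of the associative identity $*\varphi(e_1,e_2,e_3,v)=0$ ($j=i$), so it contributes nothing. For the normal component, a direct check in the standard frame of Definition~\ref{def on R7} yields
\begin{align*}
*\varphi(N,e_2,e_3,v)&=\langle e_1\times N, v\rangle,\qquad *\varphi(e_1,N,e_3,v)=\langle e_2\times N, v\rangle,\\
*\varphi(e_1,e_2,N,v)&=\langle e_3\times N, v\rangle
\end{align*}
for any $N,v\in\nu_a$. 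Summing identifies the first sum with $\langle D\psi, v\rangle$.

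The second term is the new feature of the nearly parallel (as opposed to torsion-free) setting. Because $d\varphi = 4*\varphi$ and the intrinsic torsion of a nearly parallel $G_2$-structure lies in the single scalar class, one has
\begin{align*}
\nabla_X \varphi = \iota_X(*\varphi),
\end{align*}
which is verified by antisymmetrizing both sides over all four arguments to recover $d\varphi = 4*\varphi$. Since $*$ is parallel and $**=\mathrm{id}$ on $4$-forms in dimension $7$, the standard duality $*(\iota_X(*\varphi)) = -X^\flat\wedge\varphi$ yields $\nabla_X(*\varphi) = -X^\flat\wedge\varphi$. Evaluating at $(e_1,e_2,e_3,v)$ with $\psi,v\in\nu$ kills every term involving $\psi^\flat(e_i)$ and leaves $(\nabla_\psi *\varphi)(e_1,e_2,e_3,v) = \psi^\flat(v)\,\varphi(e_1,e_2,e_3) = \langle\psi,v\rangle$.

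Combining the two contributions gives $\langle D\psi + \psi, v\rangle = 0$ for every $v \in \nu_a$, equivalent to $D\psi = -\psi$. The principal difficulty is sign bookkeeping: one must reconcile three conventions simultaneously---the orientation of the associative frame fixed in Remark~\ref{def of cross product}, the parity-dependent sign in $*(\iota_X *\varphi) = -X^\flat\wedge\varphi$, and the normalization of the nearly parallel torsion matching $d\varphi = 4*\varphi$---so that the eigenvalue on the right comes out to exactly $-1$ rather than $+1$ or some other constant.
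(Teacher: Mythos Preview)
Your proof is correct. The paper does not actually supply a proof of this proposition---it is quoted from \cite{K deform}---so there is nothing to compare against here. Your approach is the standard one: linearize the defining condition $*\varphi(e_1,e_2,e_3,\cdot)=0$ from Lemma~\ref{equiv condi asso coasso}, split the variation into the frame-variation term (yielding $D\psi$ via the pointwise identity $*\varphi(N,e_2,e_3,v)=g(e_1\times N,v)$ on the normal bundle of an associative plane) and the torsion term $\nabla_\psi(*\varphi)=-\psi^\flat\wedge\varphi$ specific to the nearly parallel setting, which contributes exactly $\langle\psi,v\rangle$. The sign checks you flag---in particular $*(\iota_X *\varphi)=-X^\flat\wedge\varphi$ in dimension~$7$ and the normalization $\nabla_X\varphi=\iota_X(*\varphi)$ matching $d\varphi=4*\varphi$---are handled correctly.
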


Thus to compute the dimensions of the infinitesimal deformation spaces, 
we only have to know $\nabla^{\perp_{A}}$ and $\times$. 
The next lemma is useful for the computation.

\begin{lem} \label{frame for deform}
Let $\{ e_{1}, e_{2}, e_{3} \}$ be 
the local oriented orthonormal frame of $TA$ satisfying $e_{3} = e_{1} \times e_{2}$. 
Choose a local normal vector field $V_{1}$ with $|V_{1}| = 1$. 

Set $V_{2} = e_{1} \times V_{1}, V_{3} = e_{2} \times V_{1}, V_{4} = - e_{3} \times V_{1}$. 
Then $\{ V_{1}, V_{2}, V_{3}, V_{4} \}$ is a local orthonormal frame of $\nu$ 
satisfying 
\begin{align*}
\varphi = e^{123} + e^{1} (V^{12}+V^{34}) + e^{2} (V^{13}+V^{42}) - e^{3} (V^{14}+V^{23}), 
\end{align*}
where $\{e^{i}, V^{j} \}$ is a dual coframe of $\{e_{i}, V_{j} \}$. 
By the definition of the cross product 
in Remark \ref{def of cross product}, we have 
\begin{align*}
\left( e_{i} \times V_{j} \right) = 
\left( 
\begin{array}{cccc}
V_{2} & -V_{1} & V_{4} & -V_{3} \\
V_{3} & -V_{4} & -V_{1} & V_{2} \\
-V_{4}& -V_{3} & V_{2} & V_{1} 
\end{array} 
\right). 
\end{align*}
\end{lem}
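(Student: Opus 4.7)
The plan is to reduce the whole statement to a pointwise computation in the standard model $(\mathbb{R}^{7}, \varphi_{0})$ via the pointwise $G_{2}$-action. First I would observe that at any point $p \in A$, the stabilizer of an orthonormal pair $(e_{1}, e_{2})$ inside $G_{2}$ is a copy of ${\rm SU}(2)$ that acts as the full ${\rm SO}(4)$ on $e_{3}^{\perp} \subset (e_{1}, e_{2})^{\perp} = \mathcal{V}_p \oplus \nu_p$ restricted to $\nu_p$; in particular it acts transitively on the unit sphere of the 4-dimensional normal space $\nu_{p}$. Consequently there exists an oriented linear isometry $T_{p} Y \to \mathbb{R}^{7}$ identifying $\varphi_{p}$ with $\varphi_{0}$ and sending $(e_{1}, e_{2}, V_{1})$ to $(\partial_{1}, \partial_{2}, \partial_{4})$. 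Since $e_{3} = e_{1} \times e_{2}$ and, in the standard model, $\partial_{1} \times \partial_{2} = \partial_{3}$, this isometry also sends $e_{3}$ to $\partial_{3}$.

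Next I would verify that under this identification the vectors $V_{2}, V_{3}, V_{4}$ correspond to $\partial_{5}, \partial_{6}, \partial_{7}$. Reading the cross product from the explicit expression of $\varphi_{0}$ in Definition \ref{def on R7} via $g_{0}(u \times v, w) = \varphi_{0}(u, v, w)$, one finds $\partial_{1} \times \partial_{4} = \partial_{5}$, $\partial_{2} \times \partial_{4} = \partial_{6}$, and $\partial_{3} \times \partial_{4} = -\partial_{7}$. This matches the definition of $V_{2}, V_{3}, V_{4}$, and hence $\{V_{1}, V_{2}, V_{3}, V_{4}\}$ is mapped to $\{\partial_{4}, \partial_{5}, \partial_{6}, \partial_{7}\}$. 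Orthonormality is then immediate, and the fact that these vectors are normal to $A$ is a direct consequence of the associativity of $A$: for a normal vector $V$ one has $g(e_{i} \times V, e_{j}) = \varphi(e_{i}, V, e_{j}) = -g(e_{i} \times e_{j}, V) = 0$ since $e_{i} \times e_{j} \in TA$.

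It remains to obtain the two displayed formulas. The formula for $\varphi$ is just the identity
\[
\varphi_{0} = dx_{123} + dx_{1}(dx_{45}+dx_{67}) + dx_{2}(dx_{46}-dx_{57}) - dx_{3}(dx_{47}+dx_{56})
\]
rewritten in the dual coframe $\{e^{i}, V^{j}\}$, using $V^{1} \leftrightarrow dx_{4}$, etc. Likewise, each entry of the cross product table is read off from $\varphi_{0}$: for instance, the coefficient of $dx_{7}$ in $i(\partial_{2}) i(\partial_{5}) \varphi_{0}$ gives $\partial_{2} \times \partial_{5} = -\partial_{7}$, so $e_{2} \times V_{2} = -V_{4}$, and the remaining eleven entries are obtained in the same way.

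The only slightly subtle point is justifying the pointwise $G_{2}$-transitivity claim on triples $(e_{1}, e_{2}, V_{1})$; once that is in hand, the orthonormality, the form of $\varphi$, and the cross product table are all routine transcriptions of the standard model. There is no analytic difficulty — the statement is essentially a linear-algebraic normal form lemma for an associative 3-plane together with a chosen unit normal.
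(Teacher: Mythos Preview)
Your argument is correct in substance; the paper itself states this lemma without proof, treating it as a routine pointwise verification, so there is nothing to compare against beyond noting that your reduction to the standard model $(\mathbb{R}^{7},\varphi_{0})$ via the $G_{2}$-action is exactly the expected approach.

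Two small corrections are in order. First, the stabilizer ${\rm SU}(2)$ of the pair $(e_{1},e_{2})$ inside $G_{2}$ does \emph{not} act as ``the full ${\rm SO}(4)$'' on the normal $4$-plane---it is only a $3$-dimensional subgroup of ${\rm SO}(4)$---but it does act transitively on the unit sphere $S^{3}\subset\nu_{p}$ (it is the standard ${\rm SU}(2)$ on $\mathbb{C}^{2}\cong\mathbb{R}^{4}$), and transitivity is all your argument requires. Second, your decomposition $(e_{1},e_{2})^{\perp}=\mathcal{V}_{p}\oplus\nu_{p}$ misuses the paper's notation: $\mathcal{V}$ there denotes the vertical distribution of $\pi:S^{7}\to S^{4}$, which is unrelated to the present lemma; you mean $(e_{1},e_{2})^{\perp}=\mathbb{R}e_{3}\oplus\nu_{p}$. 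With those fixes your sketch is a complete proof.
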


\begin{lem} \label{diff_cross} \cite{K deform} 
For any 
$X, u, v \in \mathfrak{X}(A), \eta \in C^{\infty} (A, \nu)$, we have 
\begin{align*}
\nabla_{X}^{\perp_{A}} (u \times \eta) 
=&
(\nabla_{X}^{\top_{A}} u) \times \eta + u \times (\nabla_{X}^{\perp_{A}} \eta) - (\chi (X, u, \eta))^{\perp_{A}}, 
\end{align*}
where 
$\chi (X, u, \eta) = X \times (u \times \eta) + g(X, u) \eta$ 
and $\top_{A}: TY \rightarrow TA$ and $\perp_{A}: TY \rightarrow \nu$ are projections. 
\end{lem}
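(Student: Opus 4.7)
The plan is to differentiate the defining identity $g(u \times \eta, w) = \varphi(u, \eta, w)$ in the ambient direction $X$, exploit the nearly parallel condition to identify $\nabla\varphi$, and then split tangent and normal components using the second fundamental form. Extend $u, \eta$ to vector fields in a neighborhood of $A$ in $Y$ so that $\nabla_X u, \nabla_X \eta$ make sense, and compute
\begin{equation*}
\nabla_X(u\times \eta) = (\nabla_X u)\times\eta + u\times(\nabla_X\eta) + (\nabla_X\times)(u,\eta),
\end{equation*}
where $(\nabla_X\times)(u,\eta)$ is the vector dual to the $1$-form $(\nabla_X\varphi)(u,\eta,\cdot)$.

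Next I would use that $\varphi$ is nearly parallel with $d\varphi=4*\varphi$, which (by the standard decomposition of $\nabla\varphi$ into torsion components) implies $\nabla_X\varphi = X\lrcorner *\varphi$. Combined with the universal $G_2$ identity
\begin{equation*}
X\times(u\times\eta) + g(X,u)\,\eta - g(X,\eta)\,u = -*\varphi(X,u,\eta,\cdot)^{\#},
\end{equation*}
and the fact that $g(X,\eta)=0$ since $X\in TA$ and $\eta\in\nu$, one obtains $(\nabla_X\varphi)(u,\eta,\cdot)^{\#} = -\chi(X,u,\eta)$. Thus
\begin{equation*}
\nabla_X(u\times\eta) = (\nabla_X u)\times\eta + u\times(\nabla_X\eta) - \chi(X,u,\eta).
\end{equation*}

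It remains to take the normal component of this identity and verify that the two cross-product terms split cleanly. Write $\nabla_X u = \nabla^{\top_A}_X u + \mathrm{II}(X,u)$ and $\nabla_X\eta = \nabla^{\perp_A}_X\eta - A_\eta(X)$. The key geometric input, readable from the canonical frame of Lemma \ref{frame for deform}, is that on an associative submanifold the cross product sends $TA\times\nu\to\nu$, $\nu\times\nu\to TA$ and $TA\times TA\to TA$. Therefore $(\nabla^{\top_A}_X u)\times\eta$ and $u\times(\nabla^{\perp_A}_X\eta)$ are both normal, while the corrections $\mathrm{II}(X,u)\times\eta$ and $u\times A_\eta(X)$ are tangent and so drop out when projecting to $\nu$.

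Taking the $\perp_A$-projection gives the stated formula. The main obstacle is really the $G_2$ algebraic identity in the second step, relating the double cross product to $*\varphi$; after that, the derivation is bookkeeping using the nearly parallel relation and the observation about which cross-product subspaces are tangent versus normal on an associative submanifold. I would either quote this identity from Harvey--Lawson or verify it at a point in $\mathbb{R}^7$ using the standard model of $\varphi_0$ and $*\varphi_0$, since it is pointwise and $G_2$-invariant.
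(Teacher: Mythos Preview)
Your argument is correct. The paper itself does not supply a proof of this lemma; it is quoted from the companion paper \cite{K deform}, so there is no in-text proof to compare against. Your route---differentiating the defining relation $g(u\times\eta,w)=\varphi(u,\eta,w)$, using $\nabla_X\varphi = i(X)\,*\varphi$ for a nearly parallel structure with $d\varphi=4*\varphi$, invoking the pointwise $G_2$ identity
\[
X\times(u\times\eta)+g(X,u)\,\eta-g(X,\eta)\,u=-\bigl(*\varphi(X,u,\eta,\cdot)\bigr)^{\#},
\]
and then projecting onto $\nu$ using the block structure $TA\times\nu\subset\nu$, $\nu\times\nu\subset TA$, $TA\times TA\subset TA$ from Lemma~\ref{frame for deform}---is exactly the natural proof and each step is valid. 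The only comment is cosmetic: your verification of the $G_2$ identity at a point in the standard model is the right way to justify it, and you should state explicitly that $\nabla_X\varphi=i(X)*\varphi$ follows from the vanishing of all torsion classes except $\tau_0=4$.
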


We can compute $\nabla_{e_{i}}^{\perp_{A}} V_{j}$ 
from $\nabla_{e_{i}}^{\top_{A}} e_{j}$ and $\nabla_{e_{i}}^{\perp_{A}} V_{1}$ by Lemma \ref{diff_cross} 
and obtain the following. 
The proof is straightforward and we omit it.

\begin{lem} \label{conn of Vj}
Denote 
$\nabla_{e_{i}}^{\top_{A}} e_{j} = \sum_{k=1}^{3} \Gamma_{i j}^{k} e_{k}$ and 
$\nabla_{e_{i}}^{\perp_{A}} V_{1} = \sum_{j=2}^{4} K_{i j} V_{j}$. Then we have 
for $i=1,2,3$
\begin{align*}
\nabla_{e_{i}}^{\perp_{A}} V_{2} 
&=
-K_{i 2} V_{1} + (\Gamma_{i 1}^{2} -K_{i 4} + \delta_{i 3}) V_{3} + (- \Gamma_{i 1}^{3} + K_{i 3} + \delta_{i 2}) V_{4}, \\
\nabla_{e_{i}}^{\perp_{A}} V_{3} 
&=
-K_{i 3} V_{1} + (\Gamma_{i 2}^{1} + K_{i 4} - \delta_{i 3}) V_{2} + (- \Gamma_{i 2}^{3} - K_{i 2} - \delta_{i 1}) V_{4}, \\
\nabla_{e_{i}}^{\perp_{A}} V_{4} 
&=
-K_{i 4} V_{1} + (- \Gamma_{i 3}^{1} -K_{i 3} - \delta_{i 2}) V_{3} + (- \Gamma_{i 3}^{2} + K_{i 2} + \delta_{i 1}) V_{3}. 
\end{align*}
\end{lem}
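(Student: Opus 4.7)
The plan is to apply Lemma \ref{diff_cross} directly to each of the defining identities
\[
V_{2}=e_{1}\times V_{1},\qquad V_{3}=e_{2}\times V_{1},\qquad V_{4}=-e_{3}\times V_{1},
\]
and then to rewrite the resulting expressions using the cross product table in Lemma \ref{frame for deform}. For each $j\in\{2,3,4\}$, Lemma \ref{diff_cross} produces three contributions, which I would handle in the following order.

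First, I would compute the \emph{tangential} term $(\nabla^{\top_{A}}_{e_{i}}e_{a})\times V_{1}$, where $a\in\{1,2,3\}$ is the tangent index attached to $V_{j}$. Using $\nabla^{\top_{A}}_{e_{i}}e_{a}=\sum_{k}\Gamma^{k}_{ia}e_{k}$ together with the column of the cross product table $e_{k}\times V_{1}\in\{V_{2},V_{3},-V_{4}\}$, this contributes $\Gamma^{1}_{ia}V_{2}+\Gamma^{2}_{ia}V_{3}-\Gamma^{3}_{ia}V_{4}$. Second, I would compute the \emph{normal} term $e_{a}\times(\nabla^{\perp_{A}}_{e_{i}}V_{1})=\sum_{j=2}^{4}K_{ij}\,(e_{a}\times V_{j})$ by reading off the appropriate row of the cross product table. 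Third, I would deal with the correction $-(\chi(e_{i},e_{a},V_{1}))^{\perp_{A}}$. Since $\chi(e_{i},e_{a},V_{1})=e_{i}\times V_{j}+g(e_{i},e_{a})V_{1}$ and the table shows that $e_{i}\times V_{j}\in\nu$ in every case, the projection onto $\nu$ is automatic and this term is simply $-(e_{i}\times V_{j})-\delta_{ia}V_{1}$.

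After collecting the three contributions, I would simplify using metric compatibility of $\nabla$ in the orthonormal frame $\{e_{1},e_{2},e_{3}\}$: the Christoffel symbols satisfy $\Gamma^{k}_{ij}+\Gamma^{j}_{ik}=0$, so in particular $\Gamma^{a}_{ia}=0$. This makes the $V_{2}$--coefficient of $\nabla^{\perp_{A}}_{e_{i}}V_{2}$ (and analogously for $V_{3},V_{4}$) vanish, as required by the unit length of each $V_{j}$. The remaining coefficients of $V_{1}$ will come entirely from the normal term (giving $-K_{ij}$ by the Kronecker symbols $e_{a}\times V_{j}$ produces), while the mixed $V_{k}$ ($k\neq 1,j$) coefficients combine one contribution from $\Gamma$, one from $K$, and one Kronecker correction from $-e_{i}\times V_{j}$.

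The argument is conceptually straightforward; the main obstacle is bookkeeping. One has to expand nine instances of $e_{a}\times V_{b}$ and nine instances of $e_{i}\times V_{j}$ with the correct signs, assemble them with the Christoffel and $K$-coefficients, and check that the Kronecker $\delta_{ij}$'s appearing in the final formula indeed come out of the projection of $\chi$. Once the table of Lemma \ref{frame for deform} is used systematically (row by row for each $j$), the three formulas drop out and match the statement. I would present the computation only for $V_{2}$, since the derivations for $V_{3}$ and $V_{4}$ are identical in structure with the roles of $e_{1},e_{2},e_{3}$ permuted (and a sign change in the $V_{4}$ case coming from $V_{4}=-e_{3}\times V_{1}$).
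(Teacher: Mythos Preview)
Your proposal is correct and follows exactly the route the paper indicates: the paper states just before Lemma~\ref{conn of Vj} that the result is obtained from $\nabla_{e_{i}}^{\top_{A}}e_{j}$ and $\nabla_{e_{i}}^{\perp_{A}}V_{1}$ via Lemma~\ref{diff_cross}, and then omits the computation as straightforward. Your outline of the three contributions (tangential, normal, and the $\chi$-correction), the use of the cross product table of Lemma~\ref{frame for deform}, and the cancellation $\Gamma^{a}_{ia}=0$ from metric compatibility is precisely how the omitted calculation goes.
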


By the definition of the Levi-Civita connection, we have the following. 
The proof is also straightforward and we omit it. 
\begin{lem} \label{LC conn on A}
Suppose that 
$A$ is a Lie group $G$ and 
$\{ e_{i} \}_{i=1,2,3}$ are left invariant vector fields. 
Denoting $[e_{i}, e_{j}] = \sum_{i=1}^{3} c_{ij}^{k} e_{k}$ $(c_{ij}^{k} \in \mathbb{R})$, 
we have 
\begin{align*}
\nabla_{e_{i}}^{\top_{A}} e_{j} = 
\frac{1}{2} \sum_{k=1}^{3} \left(c_{i j}^{k} - c_{i k}^{j} - c_{j k}^{i} \right) e_{k}.
\end{align*}
\end{lem}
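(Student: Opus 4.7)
The plan is to recognize this as a direct application of the Koszul formula once one observes that the induced metric on $A$ is left-invariant. First I would appeal to the Gauss formula: the tangential part of the ambient Levi-Civita connection restricted to tangent vector fields is exactly the intrinsic Levi-Civita connection of $(A, g|_A)$, so $\nabla^{\top_A}_{e_i} e_j$ can be computed entirely inside $A$.

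Next, since $A$ is a $G$-orbit of a closed Lie subgroup $G \subset \mathrm{Sp}(1)\mathrm{Sp}(2)$ acting by isometries on the squashed $S^{7}$, the induced metric $g|_A$ is $G$-invariant. Under the identification of $A$ with $G$ (up to a discrete isotropy), this makes $g|_A$ left-invariant, so $g(e_\alpha, e_\beta)$ is constant on $A$ for any pair of left-invariant vector fields. The orthonormal frame of Lemma \ref{frame for deform} can be chosen left-invariant by propagating an orthonormal basis at a reference point via the left $G$-action, so $g(e_i, e_j) = \delta_{ij}$ holds identically on $A$.

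With these two ingredients, I would plug into the Koszul formula
\[
2 g(\nabla^{\top_A}_{e_i} e_j, e_k) = e_i g(e_j, e_k) + e_j g(e_i, e_k) - e_k g(e_i, e_j) + g([e_i, e_j], e_k) - g([e_i, e_k], e_j) - g([e_j, e_k], e_i).
\]
The three derivative terms vanish by constancy of the inner products. The three bracket terms reduce to $c_{ij}^k$, $-c_{ik}^j$, and $-c_{jk}^i$ respectively, via $g([e_\alpha, e_\beta], e_\gamma) = c_{\alpha\beta}^\gamma$ (using orthonormality). Dividing by $2$ and expanding $\nabla^{\top_A}_{e_i} e_j$ in the orthonormal basis $\{e_k\}$ yields the stated formula.

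There is essentially no genuine obstacle: the lemma is the standard derivation of the Levi-Civita connection of a left-invariant metric on a Lie group. The only point requiring a moment of care is the justification that the orthonormal frame $\{e_i\}$ may be taken left-invariant, which is immediate from the $G$-equivariance of the constructions in Lemma \ref{frame for deform} under the isometric $G$-action.
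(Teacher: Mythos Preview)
Your proposal is correct and follows exactly the approach the paper has in mind: the paper simply states that the lemma follows ``by the definition of the Levi-Civita connection'' and omits the proof as straightforward, which is precisely the Koszul-formula computation you carry out.
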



\subsection{Computations on ${\rm SU}(2)$}

For the convenience of the computation, 
we summarize formulas on ${\rm SU}(2)$. 
Define the basis 
$\{ E_{1}, E_{2}, E_{3} \}$
of $\mathfrak{su}(2)$ as (\ref{basis of su2}). 
\begin{lem} \label{Fourier_S3}
Let 
$V_{n}$ be a $\mathbb{C}$-vector space 
of all complex homogeneous polynomials 
with two variables $z_{1}, z_{2}$ of degree $n$, where $n \geq 0$, and 
define the representation 
$\rho_{n} : {\rm SU}(2) \rightarrow {\rm GL}(V_{n})$ as 
\begin{align*} 
\left(\rho_{n} 
\left( 
\begin{array}{cc}
a & -\overline{b} \\
b &   \overline{a} \\
\end{array} 
\right) f
\right) (z_{1}, z_{2}) 
=
f
\left( 
(z_{1}, z_{2}) 
\left(
\begin{array}{cc}
a & -\overline{b} \\
b &   \overline{a} \\
\end{array} 
\right) 
\right).  
\end{align*} 
Define the Hermitian inner product $\langle \ , \ \rangle$ 
of $V_{n}$ such that 
\begin{align*} 
\left \{
v^{(n)}_{k}
=
\frac{1}{\sqrt{k ! (n - k)!}} z_{1}^{n-k} z_{2}^{k}
\right \}_{0 \leq k \leq n}  
\end{align*} 
is a unitary basis of $V_{n}$. 
Denoting by $\widehat{{\rm SU}(2)}$  
the set of all equivalence classes of finite dimensional irreducible representations of ${\rm SU}(2)$, 
we know that 
$\widehat{{\rm SU}(2)} = \{ (V_{n} ,\rho_{n}) ; n \geq 0 \}$.  
Then 
every $\mathbb{C}$-valued continuous function 
on ${\rm SU}(2)$ is 
uniformly approximated by 
the $\mathbb{C}$-linear combination of the following functions: 
\begin{align*}
\left \{ \langle \rho_{n} (\cdot) v^{(n)}_{i}, v^{(n)}_{j} \rangle ; 
n \geq 0, 0 \leq i, j \leq n
\right \}, 
\end{align*} 
which are mutually orthogonal with respect to the $L_{2}$ inner product. 
\end{lem}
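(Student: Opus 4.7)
The statement is a standard application of the representation theory of $\mathrm{SU}(2)$ together with the Peter–Weyl theorem, so the plan is to verify the pieces one at a time rather than to do anything original.

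First, I would check that $\rho_{n}$ is a well-defined representation: the formula
$(\rho_{n}(g)f)(z_{1},z_{2}) = f((z_{1},z_{2})g)$ is the standard right-regular-type action on polynomials, and a direct substitution confirms $\rho_{n}(g_{1}g_{2}) = \rho_{n}(g_{1})\rho_{n}(g_{2})$. Next I would verify that $\langle\cdot,\cdot\rangle$, with the claimed orthonormal basis $\{v_{k}^{(n)}\}$, is $\mathrm{SU}(2)$-invariant. The slickest way is to notice that this inner product is (up to a constant) the one induced from the natural $\mathrm{SU}(2)$-invariant inner product on $\mathbb{C}^{2}$ via symmetric powers: $V_{n}\cong \mathrm{Sym}^{n}(\mathbb{C}^{2})^{*}$, and the factor $1/\sqrt{k!(n-k)!}$ is precisely the normalization that turns the monomial basis into an orthonormal basis for this symmetric-power inner product. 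Unitarity of $\rho_{n}$ then follows from unitarity of the defining representation.

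Second, I would prove irreducibility and completeness of the list $\{(V_{n},\rho_{n})\}$. Differentiating $\rho_{n}$ yields the Lie algebra action; in the standard $(H, E_{+}, E_{-})$ basis of $\mathfrak{sl}(2,\mathbb{C}) = \mathfrak{su}(2)\otimes\mathbb{C}$, the monomials $z_{1}^{n-k}z_{2}^{k}$ are weight vectors with distinct weights $n-2k$, and $E_{-}$ acts cyclically from the highest weight vector $z_{1}^{n}$ down to $z_{2}^{n}$; hence $V_{n}$ is irreducible. Completeness of the list is the classical highest-weight classification: any finite-dimensional complex irreducible representation of $\mathrm{SU}(2)$ is (by complexification) an irreducible representation of $\mathfrak{sl}(2,\mathbb{C})$, hence determined by its highest weight, which must be a non-negative integer, and so is equivalent to some $V_{n}$.

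Third, I would invoke the Peter–Weyl theorem for the compact group $\mathrm{SU}(2)$: the linear span of matrix coefficients of finite-dimensional irreducible unitary representations is dense in $C(\mathrm{SU}(2))$ in the uniform topology. Combined with the classification $\widehat{\mathrm{SU}(2)}=\{V_{n}\}_{n\geq 0}$ just established, this is exactly the uniform-approximation statement. Finally, the mutual $L^{2}$-orthogonality of the matrix coefficients $\langle\rho_{n}(\cdot)v_{i}^{(n)},v_{j}^{(n)}\rangle$ for different $(n,i,j)$ is precisely Schur's orthogonality relations for compact groups, applied to the unitary basis $\{v_{k}^{(n)}\}$.

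There is no genuine obstacle here: each ingredient is textbook material (for instance \cite{Besse} or any reference on compact Lie groups). The only thing that requires the specific normalization of the inner product chosen in the statement is the unitarity of $\rho_{n}$ on this explicit basis, and this is forced by the symmetric-power description, so the choice of constants $1/\sqrt{k!(n-k)!}$ is not a free parameter but the unique one making $\{v_{k}^{(n)}\}$ orthonormal.
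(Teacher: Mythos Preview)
Your proposal is correct and complete as an outline: verifying $\rho_n$ is a unitary representation on the symmetric-power inner product, invoking the highest-weight classification of $\mathfrak{sl}(2,\mathbb{C})$-modules to get $\widehat{\mathrm{SU}(2)}=\{V_n\}$, and then applying Peter--Weyl plus Schur orthogonality is exactly the standard route.

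The paper, however, gives no proof of this lemma at all; it is stated as a well-known fact from representation theory and immediately followed by the next lemma. So there is no ``paper's approach'' to compare against --- your write-up simply supplies the textbook argument that the author chose to omit.
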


By a direct computation, we see the following. 

\begin{lem} \label{calc_S3}
Identify $X \in \mathfrak{su}(2)$ with 
the left invariant differential operator on ${\rm SU}(2)$. 
For $u = \sum_{l = 0}^{n} C_{l} v^{(n)}_{l} \in V_{n}$, 
set  
\begin{align*}
u^{*} = \sum_{l = 0}^{n} (-1)^{n - l} \overline{C}_{n - l} v^{(n)}_{l} \in V_{n}. 
\end{align*}
Then 
for any $n \geq 0, 0 \leq k, l \leq n,  u, v \in V_{n}, X \in \mathfrak{su}(2)$, we have 
\begin{align*}
X \langle \rho_{n} (\cdot) v, u \rangle
&=
\langle \rho_{n} (\cdot)  d \rho_{n}  (X) v, u \rangle, \\ 
(d \rho_{n} (X) v) (z_{1}, z_{2}) 
&= 
\left( \frac{\partial v}{\partial z_{1}}, \frac{\partial v}{\partial z_{2}} \right) {}^t\! X
\left(
\begin{array}{l}
z_{1} \\
z_{2}
\end{array}
\right), \\
\overline{ \langle \rho_{n} (\cdot) v^{(n)}_{k}, u \rangle} 
&=
(-1)^{k}
\langle \rho_{n} (\cdot) v^{(n)}_{n - k}, u^{*} \rangle,  
\end{align*}
\begin{align*}
(-i E_{1} + E_{2}) 
\langle \rho_{n} (\cdot) v^{(n)}_{k}, u \rangle
&=
\left\{
\begin{array}{ll}
2i \sqrt{(k + 1)(n - k)} \langle \rho_{n} (\cdot) v^{(n)}_{k + 1}, u \rangle, & (k < n) \\
0,                                                                                                      & (k = n)
\end{array}
\right.\\
(i E_{1} + E_{2}) 
\langle \rho_{n} (\cdot) v^{(n)}_{k}, u \rangle
&=
\left\{
\begin{array}{ll}
2i \sqrt{k (n - k + 1)} \langle \rho_{n} (\cdot) v^{(n)}_{k - 1}, u \rangle, & (k > 0) \\
0,                                                                                                      & (k = 0)
\end{array}
\right.\\
i E_{3} \langle \rho_{n} (\cdot) v^{(n)}_{k}, u \rangle
&= (-n + 2k) \langle \rho_{n} (\cdot) v^{(n)}_{k}, u \rangle. \\ 
\end{align*}
\end{lem}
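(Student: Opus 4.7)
The plan is to establish the four groups of formulas in the order they are stated, each following from the previous by a direct computation. For the first identity, interpreting $X\in\mathfrak{su}(2)$ as the left invariant operator $(Xf)(g)=\tfrac{d}{dt}\big|_{t=0}f(g\exp(tX))$ and using that $\rho_n$ is a homomorphism, I have
\begin{align*}
X\langle\rho_n(\cdot)v,u\rangle(g)
=\tfrac{d}{dt}\big|_{t=0}\langle\rho_n(g)\rho_n(\exp tX)v,u\rangle
=\langle\rho_n(g)\,d\rho_n(X)v,u\rangle.
\end{align*}
The second formula comes from differentiating the polynomial action $(\rho_n(g)v)(z_1,z_2)=v((z_1,z_2)g)$ at $t=0$ with $g=\exp(tX)$. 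Since $\tfrac{d}{dt}\big|_{t=0}(z_1,z_2)\exp(tX)=(z_1,z_2)X$, the chain rule gives
\begin{align*}
(d\rho_n(X)v)(z_1,z_2)=\left(\tfrac{\partial v}{\partial z_1},\tfrac{\partial v}{\partial z_2}\right){}^t\!X\begin{pmatrix}z_1\\ z_2\end{pmatrix},
\end{align*}
upon rewriting the row vector $(z_1,z_2)X$ as the column $\,{}^t\!X\,(z_1,z_2)^t$.

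The complex conjugation identity is conjugate-linear in $u$ on both sides, so it suffices to check the case $u=v_l^{(n)}$, where $u^*=(-1)^l v_{n-l}^{(n)}$ by the definition. Expanding
\begin{align*}
\rho_n(g)v_k^{(n)}=\frac{(z_1 a+z_2 b)^{n-k}(-z_1\bar b+z_2\bar a)^{k}}{\sqrt{k!(n-k)!}}
\end{align*}
by the binomial theorem and extracting the coefficient of $z_1^{n-l}z_2^{l}$ writes $\langle\rho_n(g)v_k^{(n)},v_l^{(n)}\rangle$ as an explicit sum over $(i,j)$ with $i+j=l$ of monomials in $a,\bar a,b,\bar b$. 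Taking complex conjugate and reindexing via $i=n-k-j'$, $j=k-i'$ (so that $i'+j'=n-l$) recasts the sum as $(-1)^{k+l}$ times the analogous expansion of $\langle\rho_n(g)v_{n-k}^{(n)},v_{n-l}^{(n)}\rangle$; combined with $(v_l^{(n)})^*=(-1)^l v_{n-l}^{(n)}$ this gives the claimed identity.

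For the three ladder-operator formulas I combine the first two parts. Writing $d\rho_n(E_1)=-z_2\partial_{z_1}+z_1\partial_{z_2}$, $d\rho_n(E_2)=iz_2\partial_{z_1}+iz_1\partial_{z_2}$, and $d\rho_n(E_3)=iz_1\partial_{z_1}-iz_2\partial_{z_2}$ (reading off from the second formula with the convention that $[E_i,E_{i+1}]=2E_{i+2}$), and applying these to $v_k^{(n)}=z_1^{n-k}z_2^{k}/\sqrt{k!(n-k)!}$, the combinations $-iE_1+E_2$ and $iE_1+E_2$ are chosen so that only the raising, respectively lowering, contribution survives, the square roots $\sqrt{(k+1)(n-k)}$ and $\sqrt{k(n-k+1)}$ arising from the ratio of normalising factors $\sqrt{k!(n-k)!}$; the boundary cases $k=n$ and $k=0$ are then automatic. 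Similarly $iE_3$ is diagonal with eigenvalue $-n+2k$ on $v_k^{(n)}$. The only mildly delicate step is the reindexing in the complex-conjugation identity; the remaining computations are routine applications of the chain rule and elementary manipulations of binomial coefficients.
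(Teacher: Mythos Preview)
Your proof is correct and is precisely the ``direct computation'' the paper alludes to but omits: the paper states the lemma with only the one-line remark ``By a direct computation, we see the following'' and gives no further argument. Your derivations of the left-invariance identity, the chain-rule formula for $d\rho_n(X)$, the conjugation identity via binomial expansion and reindexing, and the ladder-operator formulas via $d\rho_n(-iE_1+E_2)=2iz_2\partial_{z_1}$, $d\rho_n(iE_1+E_2)=2iz_1\partial_{z_2}$, $d\rho_n(iE_3)=-z_1\partial_{z_1}+z_2\partial_{z_2}$ are exactly what is needed, so there is nothing to compare.
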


\begin{lem} \label{gen_D}
Suppose that $\{ e_{1}, e_{2}, e_{3} \} = \{ p E_{1}, p E_{2}, q E_{3} \}$, 
where $0 \neq p, q \in \mathbb{R}$,  
is an oriented orthonormal basis of $\mathfrak{su}(2)$ 
for some metric and orientation. 
Define the differential operator 
$D_{\lambda, \mu}: C^{\infty}({\rm SU}(2), \mathbb{R}^{4}) \rightarrow C^{\infty}({\rm SU}(2), \mathbb{R}^{4})$ by 
\begin{align} \label{D lambda nu}
D_{\lambda, \mu} 
\left(
\begin{array}{c}
\psi_{1} \\
\psi_{2} \\
\psi_{3} \\
\psi_{4}
\end{array} 
\right) 
= 
\left \{
\left(
\begin{array}{cccc}
0       & -e_{1} & -e_{2} & e_{3} \\
e_{1}   & 0       & e_{3}  & e_{2} \\
e_{2}   & -e_{3} & 0      & -e_{1} \\
-e_{3} & -e_{2} & e_{1}  & 0 \\
\end{array} 
\right)
+
\left(
\begin{array}{cccc}
\lambda  &        &         &            \\
             & \mu &         &            \\
             &       & \mu   &            \\
             &       &         & \lambda \\
\end{array} 
\right)
\right \}
\left(
\begin{array}{c}
\psi_{1} \\
\psi_{2} \\
\psi_{3} \\
\psi_{4}
\end{array} 
\right), 
\end{align}
for $\lambda, \mu \in \mathbb{R}$. 
Setting $\Psi_{1} = \psi_{1} + i \psi_{4}, \Psi_{1} = \psi_{2} - i \psi_{3}$, 
$D_{\lambda, \mu}$ is described as 
\begin{align*}
D_{\lambda, \mu} 
\left(
\begin{array}{c}
\Psi_{1} \\
\Psi_{2} 
\end{array} 
\right)
=
\left \{
\left(
\begin{array}{cccc}
-i e_{3}       & -e_{1}-i e_{2}  \\
e_{1}-i e_{2} & i e_{3}            \\
\end{array} 
\right)
+
\left(
\begin{array}{cccc}
\lambda  &       \\
             & \mu 
\end{array} 
\right)
\right \}
\left(
\begin{array}{c}
\Psi_{1} \\
\Psi_{2} 
\end{array} 
\right).
\end{align*}
Set $\psi = {}^t\!(\psi_{1}, \psi_{2}, \psi_{3}, \psi_{4})$. 
Then $D_{\lambda, \mu} \psi = \alpha \psi$  for $\alpha \in \mathbb{R}$ is equivalent to 
\begin{align}
(-i e_{3} + \lambda - \alpha) \Psi_{1} - (e_{1} + i e_{2}) \Psi_{2} = 0, \label{gen_D1}\\
(e_{1} -i e_{2}) \Psi_{1} + (i e_{3} + \nu - \alpha) \Psi_{2} = 0 \label{gen_D2}.
\end{align}
These equations imply that 
$\Gamma_{p, q, \lambda, \mu, \alpha} \Psi_{2} = 0$, where 
$\Gamma_{p, q, \lambda, \mu, \alpha}$ is defined by 
\begin{align}
\Gamma_{p, q, \lambda, \mu, \alpha}
= 
\Delta_{+} + \left ( \mu - \lambda + 2q - \frac{2 p^{2}}{q} \right) i e_{3} 
+ (-2q+ \lambda -\alpha)( - \mu + \alpha), \label{gen_D3} 
\end{align}
where $\Delta_{+} = - \sum_{i = 1}^{3} e_{i}^{2}$ is a Laplacian on ${\rm SU}(2)$. 
Especially, for any $n \geq 0, 0 \leq k \leq n, u \in V_{n}$, we have 
\begin{align}
\Delta_{+} \langle \rho_{n} (\cdot) v^{(n)}_{k}, u \rangle
=&
\left \{
(-p^{2} + q^{2})(n - 2k)^{2} 
+
p^{2} (n^{2} + 2n)
\right \}
\langle \rho_{n} (\cdot) v^{(n)}_{k}, u \rangle, \label{gen_D4} \\
\Gamma_{p, q, \lambda, \mu, \alpha}
\langle \rho_{n} (\cdot) v^{(n)}_{k}, u \rangle  
=&
\left \{
(-p^{2} + q^{2}) (n - 2k)^{2} \right. \nonumber \\
&+ 
p^{2} (n^{2} + 2n) - (q(-\mu + \lambda) + 2(p^{2} - q^{2}) ) (n-2k)  \nonumber \\
&
\left.
+ 
(-2q+ \lambda -\alpha)( - \mu + \alpha)
\right \}
\langle \rho_{n} (\cdot) v^{(n)}_{k}, u \rangle. \label{gen_D5}
\end{align}
\end{lem}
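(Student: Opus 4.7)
The proof splits into three essentially independent calculations. First, I would verify the complex rewriting of $D_{\lambda,\mu}$: setting $\Psi_1=\psi_1+i\psi_4$ and $\Psi_2=\psi_2-i\psi_3$, the four scalar equations encoded by $D_{\lambda,\mu}\psi=\alpha\psi$ pair up (row $1$ with row $4$, row $2$ with row $3$) and collapse by direct substitution into the matrix in (\ref{D lambda nu}) to the complex system (\ref{gen_D1})--(\ref{gen_D2}). This step is routine; the only care needed is in tracking the signs of the off-diagonal entries of the skew matrix.

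Second, to derive (\ref{gen_D3}) I would eliminate $\Psi_1$ by applying $(e_1-ie_2)$ to (\ref{gen_D1}) and $(-ie_3+\lambda-\alpha)$ to (\ref{gen_D2}) and subtracting. Since $e_1,e_2,e_3$ are left-invariant vector fields on ${\rm SU}(2)$, they do not commute, so I would first record the bracket relations $[e_1,e_2]=(2p^2/q)e_3$, $[e_2,e_3]=2qe_1$, $[e_3,e_1]=2qe_2$, deduced from $[E_i,E_{i+1}]=2E_{i+2}$. These give $[(-ie_3+\lambda-\alpha),(e_1-ie_2)]=2q(e_1-ie_2)$, so the elimination is not exact: an extra $2q(ie_3+\mu-\alpha)\Psi_2$ term reappears on the right. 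Using the identity $(e_1-ie_2)(e_1+ie_2)=e_1^2+e_2^2+(2ip^2/q)e_3$ and $\Delta_+=-(e_1^2+e_2^2+e_3^2)$ and then collecting everything reproduces $\Gamma_{p,q,\lambda,\mu,\alpha}\Psi_2=0$ with $\Gamma$ in the claimed form.

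Third, the eigenvalue formulas (\ref{gen_D4})--(\ref{gen_D5}) follow from Lemma \ref{calc_S3}. There $iE_3$ has eigenvalue $(-n+2k)$ on $\langle\rho_n(\cdot)v_k^{(n)},u\rangle$, so $ie_3$ has eigenvalue $-q(n-2k)$. For the Laplacian, I would write $E_1^2+E_2^2=(-iE_1+E_2)(iE_1+E_2)+2iE_3$ and use the raising/lowering identities from Lemma \ref{calc_S3}: $(iE_1+E_2)$ sends $v_k^{(n)}$ to a multiple of $v_{k-1}^{(n)}$, and $(-iE_1+E_2)$ sends that back to $v_k^{(n)}$, so the product acts as multiplication by $-4k(n-k+1)$. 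This gives $E_1^2+E_2^2$ the eigenvalue $-n(n+2)+(n-2k)^2$ (the standard Casimir identity), and substituting into $\Delta_+=-p^2(E_1^2+E_2^2)-q^2E_3^2$ yields (\ref{gen_D4}). Plugging these eigenvalues into the explicit form of $\Gamma$ then produces (\ref{gen_D5}).

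The main obstacle is sign bookkeeping rather than conceptual difficulty: the non-commutativity of the $e_i$ must be tracked in exactly the right order during the elimination so that the $2q$ correction lands with the correct sign in $\Gamma$, and the Hermitian inner-product convention used in Lemma \ref{calc_S3} makes it easy to confuse the sign of the $ie_3$ eigenvalue at the final substitution step. Once these signs are pinned down, each of the three calculations is purely mechanical and no further input beyond Lemma \ref{calc_S3} and the bracket relations on ${\rm SU}(2)$ is needed.
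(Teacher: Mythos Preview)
Your proposal is correct and follows essentially the same approach as the paper. The only procedural difference is in the elimination step: the paper applies $(e_{1}-ie_{2})$ to (\ref{gen_D1}) alone, uses the commutation identity $(e_{1}-ie_{2})\,ie_{3}=(ie_{3}+2q)(e_{1}-ie_{2})$ to rewrite the result as $(-ie_{3}-2q+\lambda-\alpha)(e_{1}-ie_{2})\Psi_{1}+\bigl(-e_{1}^{2}-e_{2}^{2}-\tfrac{2p^{2}}{q}ie_{3}\bigr)\Psi_{2}=0$, and then substitutes $(e_{1}-ie_{2})\Psi_{1}$ from (\ref{gen_D2}) directly, whereas you apply operators to both equations and subtract, which leaves a residual $-2q(e_{1}-ie_{2})\Psi_{1}$ term that you then convert back via (\ref{gen_D2}); the two routes are algebraically equivalent and the paper's is slightly more direct.
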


\begin{rem}
In the case of ${\rm SU}(2)/ \Gamma$ for some finite subgroup $\Gamma$, 
we may consider the $\Gamma$ equivariant solutions of (\ref{gen_D1}) and (\ref{gen_D2}). 
\end{rem}

\begin{proof}
It is straightforward to derive (\ref{gen_D1}) and (\ref{gen_D2}). 
Since 
$
[e_{1}, e_{2}] = \frac{2p^{2}}{q} e_{3}, 
[e_{2}, e_{3}] =  2q e_{1}, 
[e_{3}, e_{1}] = 2q e_{2}, 
$
we have 
$(e_{1}-i e_{2}) i e_{3} = (i e_{3} + 2q) (e_{1} - i e_{2})$. 
Applying $(e_{1} -i e_{2})$ to (\ref{gen_D1}), we obtain 
\begin{align}
(- i e_{3} -2q + \lambda - \alpha) (e_{1} - i e_{2}) \Psi_{1}  
+ 
\left( - e_{1}^{2} - e_{2}^{2} - \frac{2p^{2}}{q} i e_{3} \right) \Psi_{2} = 0. \label{gen_D6}
\end{align}
Eliminating $\Psi_{1}$ from (\ref{gen_D6}) by (\ref{gen_D2}) gives (\ref{gen_D3}). 
From Lemma \ref{calc_S3}, we obtain (\ref{gen_D4}) and (\ref{gen_D5}). 
\end{proof}



\subsection{The case $L_{1}$}

Let ${\rm SU}(2) = {\rm Sp}(1)$ act on $S^{7}$ as (\ref{right sp1 action}). 
Then $L_{1}$ is the ${\rm SU}(2)$-orbit through $p_{0} = {}^t\!(1, 0, 0, 0)$. 
 Identifying 
${\rm SU}(2) \ni 
\left(
\begin{array}{cc}
a& - \overline{b} \\
b& \overline{a}   \\
\end{array} 
\right) 
\mapsto
a-\overline{b} j \in {\rm Sp}(1)$, 
the vector fields $E_{i}^{*}$ generated by 
$E_{i} \in \mathfrak{su}(2)$, where $i=1,2,3$, in (\ref{basis of su2}) 
are described as 
\begin{align*}
E_{1}^{*} = {}^t\!(1, 0, 0, 0) = -\xi_{2}, \qquad
E_{2}^{*} = {}^t\!(0, i, 0, 0) = -\xi_{3}, \qquad
E_{3}^{*} = {}^t\!(i, 0, 0, 0) = -\xi_{1},
\end{align*}
at $p_{0}$, 
which induces the orthonormal basis 
$\{ e_{1}, e_{2}, e_{3} \} = 5/3 \{E_{1}, E_{2}, -E_{3} \}$ of $\mathfrak{su}(2)$.

Set $v_{1} = \frac{\sqrt{5}}{3} {}^t\!(0, 0, 1, 0) \in \nu_{p_{0}}$, 
which is horizontal and $|v_{1}|_{\tilde{g}}= 1$. 
Denote $X_{0} = {}^t\!(0, 0, 1, 0)$, which is horizontal at $p_{0}$ 
and $X_{i} = \Phi_{i} (X_{0})$ for $i=1, 2, 3$. 
By the definition of $\tilde{\varphi}$ in Proposition \ref{can var}, 
the vectors $v_{2} = e_{1} \times v_{1}, v_{3} = e_{2} \times v_{1}, v_{4} = -e_{3} \times v_{1}$ are
described as $\{ v_{1}, v_{2}, v_{3}, v_{4} \} = \frac{5}{3} \{ X_{0}, X_{2}, X_{3}, X_{1} \}$. 
Define the vector field $V_{i}$ on $L_{1}$ by 
$(V_{i})_{g \cdot p_{0}} = g_{*} v_{i}$, where $g \in {\rm SU}(2)$, we obtain the following 
by Lemma \ref{relation of connection 3-sasaki} and Lemma \ref{LC conn on A}.  
\begin{align*}
\left(
\begin{array}{c}
\tilde{\nabla}_{e_{1}} V_{1} \\
\tilde{\nabla}_{e_{2}} V_{1} \\
\tilde{\nabla}_{e_{3}} V_{1}
\end{array} 
\right) 
=
\frac{1}{3}
\left(
\begin{array}{c}
V_{2} \\
V_{3} \\
-V_{4}
\end{array} 
\right), \qquad
(\tilde{\nabla}^{\top_{L_{1}}}_{e_{i}} e_{j}) = 
\frac{5}{3}
\left(
\begin{array}{ccc}
0    & -e_{3} & e_{2} \\
e_{3}& 0       & -e_{1} \\
-e_{2}& e_{1} & 0
\end{array} 
\right). 
\end{align*}
This computation and Lemma \ref{conn of Vj} give the following. 
\begin{align*}
(\tilde{\nabla}_{e_{i}} V_{j}) = 
\frac{1}{3}
\left(
\begin{array}{cccc}
V_{2} & -V_{1} & V_{4} & -V_{3}\\
V_{3} & -V_{4}& -V_{1}& V_{2}\\
-V_{4}& -V_{3}& V_{2}& V_{1}
\end{array} 
\right). 
\end{align*}
Then by the trivialization of $\nu$ via $\{ V_{1}, V_{2}, V_{3}, V_{4} \}$, 
we have 
$D= D_{-1,-1}$, where $D_{\lambda, \mu}$ is defined in (\ref{D lambda nu}). 
Using the notations of Lemma \ref{gen_D}, 
we see that $\Psi_{2}$ is constant, and hence $\Phi_{1}$ is constant. 
Thus 
we obtain $\dim_{\mathbb{R}} \{ \psi \in C^{\infty}(L_{1}, \nu); D \psi = - \psi \} = 4$. 

Since 
$\dim_{\mathbb{R}} {\rm Sp}(1) {\rm Sp}(2)/ {\rm Sp}(1) ({\rm Sp}(1) \times {\rm Sp}(1))=4$, 
${\rm Sp}(1) {\rm Sp}(2)$ induces 
$4$-dimensional  associative deformations of $L_{1}$ and we obtain the following.

\begin{prop} 
The associative deformations of $L_{1}$ are trivial. 
Its deformation space is ${\rm Sp}(1) {\rm Sp}(2)/ {\rm Sp}(1) ({\rm Sp}(1) \times {\rm Sp}(1))
= \mathbb{H}P^{1} = S^{4}$. 
The associative deformations of $L_{1}$ are 
the deformations of fibers of $\pi : S^{7} \rightarrow S^{4}$ parametrized by 
the base space $S^{4}$. 
\end{prop}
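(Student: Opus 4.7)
The plan is to apply Proposition~\ref{deform_asso_NP}, which identifies infinitesimal associative deformations of $L_{1}$ with the $(-1)$-eigenspace of the operator $D$ acting on sections of the normal bundle $\nu$. Since $L_{1} \cong {\rm SU}(2)$ acts on itself by left translation, I would work with a global left-invariant frame on $L_{1}$ and extend the chosen normal vector $v_{1}$ at $p_{0}$ to a left-invariant frame $\{V_{1},V_{2},V_{3},V_{4}\}$ of $\nu$ by translating under the ${\rm SU}(2)$-action. This gives a global trivialization of $\nu$ along $L_{1}$, reducing $D$ to a first-order linear differential operator on $C^{\infty}({\rm SU}(2),\mathbb{R}^{4})$.

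First I would compute the tangential Levi-Civita connection $\tilde{\nabla}^{\top_{L_{1}}}_{e_{i}}e_{j}$ by applying Lemma~\ref{LC conn on A} to the structure constants of $\{e_{1},e_{2},e_{3}\}$, and separately compute $\tilde{\nabla}_{e_{i}}V_{1}$ from the relation $\tilde{\nabla} - \nabla = \frac{4}{5}\Theta$ in Lemma~\ref{relation of connection 3-sasaki}, using that $V_{1}$ is a horizontal lift. Feeding these into Lemma~\ref{conn of Vj} yields the full connection matrix $(\tilde{\nabla}_{e_{i}}V_{j})$; this should display the uniform form with coefficient $1/3$ claimed in the text. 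Combining this with the cross-product table in Lemma~\ref{frame for deform}, the operator $D$ becomes precisely $D_{-1,-1}$ in the notation of Lemma~\ref{gen_D}, so the equation $D\psi = -\psi$ becomes the system (\ref{gen_D1}),(\ref{gen_D2}) with $\lambda=\mu=-1$ and $\alpha=-1$.

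The main technical step is then solving this system on ${\rm SU}(2)$. I would decompose $\Psi_{1},\Psi_{2}$ into matrix coefficients $\langle \rho_{n}(\cdot)v^{(n)}_{k},u\rangle$ via Lemma~\ref{Fourier_S3} and apply Lemma~\ref{calc_S3}. Equation (\ref{gen_D2}) together with the formulas for $(\pm iE_{1}+E_{2})$ and $iE_{3}$ forces $\Psi_{2}$ to be constant (the coefficient $(ie_{3}+\mu-\alpha)\Psi_{2}$ vanishes identically only on the trivial Fourier modes), and (\ref{gen_D1}) then forces $\Psi_{1}$ to be constant as well. Hence the $(-1)$-eigenspace has real dimension $4$.

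Finally, I would compare with the ${\rm Sp}(1){\rm Sp}(2)$-action: the stabilizer of $L_{1}$ is $K_{1}={\rm Sp}(1)({\rm Sp}(1)\times{\rm Sp}(1))$ by Theorem~\ref{plane asso}, so the orbit ${\rm Sp}(1){\rm Sp}(2)/K_{1}$ has real dimension $13-9=4$, matching the infinitesimal count. Since these group-theoretic deformations are genuine associative deformations, they exhaust the $(-1)$-eigenspace, so every infinitesimal deformation is trivial. The identification ${\rm Sp}(1){\rm Sp}(2)/K_{1} = \mathbb{H}P^{1} = S^{4}$ follows from the fact that $K_{1}$ is precisely the stabilizer of a fiber of the Hopf fibration $\pi:S^{7}\to S^{4}$, so the deformation family is exactly the family of Hopf fibers parametrized by the base $S^{4}$. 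I do not expect genuine obstacles; the only delicate point is the Fourier computation, which is handled cleanly by Lemma~\ref{gen_D}.
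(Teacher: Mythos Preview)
Your proposal is correct and follows essentially the same approach as the paper: set up the left-invariant frame, compute the normal connection via Lemmas~\ref{relation of connection 3-sasaki}, \ref{LC conn on A}, and \ref{conn of Vj}, identify $D$ with $D_{-1,-1}$, solve the Fourier system from Lemma~\ref{gen_D}, and match the resulting $4$-dimensional eigenspace against $\dim {\rm Sp}(1){\rm Sp}(2)/K_{1}=4$. One small correction: your parenthetical reason for $\Psi_{2}$ being constant is not quite right, since equation~(\ref{gen_D2}) alone (with $\mu=\alpha$) only gives $(e_{1}-ie_{2})\Psi_{1}+ie_{3}\Psi_{2}=0$; the clean argument is to use the combined operator $\Gamma_{p,q,\lambda,\mu,\alpha}$ of~(\ref{gen_D3}), which with $(p,q,\lambda,\mu,\alpha)=(5/3,-5/3,-1,-1,-1)$ reduces to $\Delta_{+}\Psi_{2}=0$, forcing $\Psi_{2}$ (and then $\Psi_{1}$) constant.
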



\subsection{The case $L_{2}$}

Let ${\rm SU}(2)$ act on $S^{7}$ by (\ref{diag SU2}). 
Then $L_{2}$ is the ${\rm SU}(2)$-orbit through $p_{0} = {}^t\!(1, 0, 0, 0)$. 
By (\ref{lie alg of diag SU2}), 
the vector fields $E_{i}^{*}$ generated by 
$E_{i} \in \mathfrak{su}(2)$ for $i=1,2,3$ in (\ref{basis of su2}) 
are described as 
\begin{align*}
E_{1}^{*} = {}^t\!(0, 0, -1, 0), \qquad
E_{2}^{*} = {}^t\!(0, 0, -i, 0), \qquad
E_{3}^{*} = {}^t\!(-i, 0, 0, 0) = \xi_{1},
\end{align*}
and satisfy  $\tilde{\varphi}(E_{1}^{*}, E_{2}^{*}, E_{3}^{*}) = -27/25 < 0$
at $p_{0}$. 
Then we obtain the induced oriented 
orthonormal basis 
$\{ e_{1}, e_{2}, e_{3} \} = \{ \frac{\sqrt{5}}{3} E_{1}, \frac{\sqrt{5}}{3} E_{2}, - \frac{5}{3} E_{3} \}$ 
of $\mathfrak{su}(2)$.

Set $v_{1} = \frac{5}{3} {}^t\!(0, 1, 0, 0) = - \frac{5}{3} \xi_{2} \in \nu_{p_{0}}$, 
which  satisfies $|v_{1}|_{\tilde{g}}= 1$. 
Denote $X_{0} = {}^t\!(0, 0, 1, 0)$, which is horizontal at $p_{0}$ 
and $X_{i} = \Phi_{i} (X_{0})$ for $i=1, 2, 3$. 
Since 
$\{ e_{1}, e_{2}, e_{3} \} = \{ -\frac{\sqrt{5}}{3} X_{0}, -\frac{\sqrt{5}}{3} X_{1}, \xi_{1} \}$, 
vectors $v_{2} = e_{1} \times v_{1}, v_{3} = e_{2} \times v_{1}, v_{4} = -e_{3} \times v_{1}$ are
described as $\{ v_{1}, v_{2}, v_{3}, v_{4} \} = \{ -\frac{5}{3} \xi_{2}, \frac{\sqrt{5}}{3} X_{2}, - \frac{\sqrt{5}}{3} X_{3}, -\frac{5}{3} \xi_{3} \}$. 
Define the vector field $V_{i}$ on $L_{2}$ by 
$(V_{i})_{g \cdot p_{0}} = g_{*} v_{i}$ where $g \in {\rm SU}(2)$. 
As in the case $L_{1}$, we obtain
\begin{align*}
(\tilde{\nabla}_{e_{i}} V_{j}) = 
\frac{1}{3}
\left(
\begin{array}{cccc}
-V_{2} & V_{1} & -V_{4} & V_{3}\\
-V_{3} & V_{4}&  V_{1}& -V_{2}\\
-5V_{4}& -V_{3}& V_{2}& 5 V_{1}
\end{array} 
\right). 
\end{align*}
Then by the trivialization of $\nu$ via $\{ V_{1}, V_{2}, V_{3}, V_{4} \}$, 
we have 
$D= D_{-1,-1/3}$, where $D_{\lambda, \mu}$ is defined in (\ref{D lambda nu}). 
Setting $(p, q, \lambda, \mu, \alpha) = (\frac{\sqrt{5}}{3}, - \frac{5}{3}, -1, \frac{1}{3}, -1)$ in (\ref{gen_D5}), 
we see that 
\begin{align*}
\Psi_{2} = \langle \rho_{2} (\cdot) v^{(2)}_{1}, u \rangle
\end{align*}
for $u \in V_{2}$. 
Since $\ker (e_{1} -i e_{2}) \cap \ker (i e_{3}) = \mathbb{C}$, 
(\ref{gen_D1}) and (\ref{gen_D2}) imply that 
\begin{align*}
\Psi_{1} = - \frac{\sqrt{10}}{5} \langle \rho_{2} (\cdot) v^{(2)}_{2}, u \rangle + C
\end{align*}
for $C \in \mathbb{C}$. 
Thus 
we obtain $\dim_{\mathbb{R}} \{ \psi \in C^{\infty}(L_{2}, \nu); D \psi = - \psi \} = 8$. 

Since 
$\dim_{\mathbb{R}} {\rm Sp}(1) {\rm Sp}(2)/  {\rm U}(1) {\rm U}(2) = 8$, 
${\rm Sp}(1) {\rm Sp}(2)$ induces 
$8$-dimensional associative deformations of $L_{2}$ and we obtain the following.

\begin{prop} 
The associative deformations of $L_{2}$ are trivial. 
Its deformation space is $ {\rm Sp}(1) {\rm Sp}(2)/ {\rm U}(1) {\rm U}(2)$.
\end{prop}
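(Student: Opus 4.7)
The plan is to apply Proposition \ref{deform_asso_NP} and reduce the computation of infinitesimal associative deformations of $L_{2}$ to an eigenvalue problem on ${\rm SU}(2)$ that can be solved via Peter--Weyl. First I would pick the base point $p_{0} = {}^t\!(1,0,0,0)$ and, using the explicit description of ${\rm SU}(2)\hookrightarrow {\rm Sp}(2)$ in (\ref{diag SU2}) together with (\ref{lie alg of diag SU2}), write down the induced orthonormal tangent frame $\{e_{1},e_{2},e_{3}\}$ at $p_{0}$ (this is already done in the excerpt). I would then choose the unit normal $v_{1} = -\tfrac{5}{3}\xi_{2}$ along the vertical direction, and use the cross product together with the formula for $\tilde{\varphi}$ in Proposition \ref{can var} to write down $v_{2}, v_{3}, v_{4}$ explicitly, as in Lemma \ref{frame for deform}.

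Next I would transport $\{v_{1},\dots,v_{4}\}$ by the ${\rm SU}(2)$-action to a global smooth frame $\{V_{1},\dots,V_{4}\}$ of the normal bundle $\nu$, and similarly for the tangent frame. The key technical step is to compute $\tilde{\nabla}_{e_{i}} V_{j}$ explicitly. For this I would use Lemma \ref{relation of connection 3-sasaki} to convert the Levi-Civita connection of $\tilde{g}$ on $S^{7}$ to that of the standard metric (which is straightforward since $\Theta$ only involves the vertical directions and the complex structures $\Phi_{i}$), then Lemma \ref{LC conn on A} to compute $\tilde{\nabla}^{\top_{L_{2}}}_{e_{i}} e_{j}$ from the structure constants of $\mathfrak{su}(2)$, and finally Lemma \ref{conn of Vj} to propagate to the $V_{j}$. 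This will yield the matrix of connection coefficients already displayed in the excerpt, and in particular show that the Dirac-type operator $D$ from Proposition \ref{deform_asso_NP} is exactly $D_{-1,-1/3}$ in the notation of Lemma \ref{gen_D}, with $(p,q)=(\tfrac{\sqrt{5}}{3},-\tfrac{5}{3})$.

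Then I would solve $D_{-1,-1/3}\psi = -\psi$, i.e.\ the system (\ref{gen_D1})--(\ref{gen_D2}) with $\alpha=-1$. By Peter--Weyl (Lemma \ref{Fourier_S3}), decompose $\Psi_{1},\Psi_{2}\in C^{\infty}({\rm SU}(2),\mathbb{C})$ into matrix coefficients $\langle \rho_{n}(\cdot)v_{k}^{(n)}, u\rangle$, apply the eigenvalue equation (\ref{gen_D3}) via (\ref{gen_D5}) to pin down which $(n,k)$ give nonzero $\Psi_{2}$, and then recover $\Psi_{1}$ from (\ref{gen_D1}) using Lemma \ref{calc_S3}. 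With the chosen parameters, solving $\Gamma_{p,q,\lambda,\mu,\alpha}\Psi_{2}=0$ becomes a polynomial condition in $(n,k)$ which I expect to force $n=2$, $k=1$; (\ref{gen_D2}) then produces $\Psi_{1}$ up to an additive constant. The resulting solution space has real dimension $2\cdot\dim_{\mathbb{C}}V_{2}+2=8$.

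Finally, I would compare with the dimension count for the homogeneous deformations: $\dim_{\mathbb{R}} {\rm Sp}(1){\rm Sp}(2) - \dim_{\mathbb{R}} {\rm U}(1){\rm U}(2) = 11 - 3 = 8$, which matches exactly. Since all infinitesimal deformations are accounted for by the ${\rm Sp}(1){\rm Sp}(2)$-action (whose orbit through $L_{2}$ is ${\rm Sp}(1){\rm Sp}(2)/K_{2}$ by Theorem \ref{plane asso}), the deformations are unobstructed and the moduli space is exactly this homogeneous space. The main obstacle is the bookkeeping in step three: the matrix $(\tilde{\nabla}_{e_{i}} V_{j})$ involves both horizontal and vertical contributions that scale differently under the canonical variation, so one has to be careful with the factors $\tfrac{s^{2}}{t^{2}} = \tfrac{1}{5}$ from Lemma \ref{relation of A and tildeA} in order to identify $D$ precisely with $D_{-1,-1/3}$ rather than some nearby operator.
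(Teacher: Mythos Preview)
Your approach is essentially identical to the paper's own proof: the same base point, the same normal frame built from $\xi_{2}$ via Lemma~\ref{frame for deform}, the same reduction to $D_{-1,-1/3}$ and the same Peter--Weyl analysis leading to $\Psi_{2}=\langle\rho_{2}(\cdot)v^{(2)}_{1},u\rangle$ and an 8-dimensional kernel. One small slip: your dimension count $11-3=8$ is wrong on both sides --- $\dim_{\mathbb{R}}{\rm Sp}(1){\rm Sp}(2)=3+10=13$ and $\dim_{\mathbb{R}}{\rm U}(1){\rm U}(2)=1+4=5$, so the correct subtraction is $13-5=8$.
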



\subsection{The case $A_{1}$}

Let $T^{3}$ act on $S^{7}$ by (\ref{T3 action}).  
Then $A_{1}$ is the $T^{3}$-orbit through $p_{0} = \frac{1}{2} {}^t\!(1, 1, 1, i)$. 
By (\ref{lie alg of T3}), 
the vector fields $F_{i}^{*}$ generated by 
$F_{i}$ for $i=1,2,3$ in (\ref{basis of t3}) are described as 
\begin{align*}
F_{1}^{*} = \frac{1}{2} {}^t\!(i, i, i, -1) = - \xi_{1}, \qquad
F_{2}^{*} = \frac{1}{2} {}^t\!(i, -i, 0, 0), \qquad
F_{3}^{*} = \frac{1}{2} {}^t\!(0, 0, i, 1),
\end{align*}
and satisfy  $\tilde{\varphi}(F_{1}^{*}, F_{2}^{*}, F_{3}^{*}) = -81/250 < 0$ at $p_{0}$. 
Then we obtain the induced oriented 
orthonormal basis 
$\{ e_{1}, e_{2}, e_{3} \} = \{ \frac{5}{3} F_{1}, \frac{5 \sqrt{6}}{9} F_{2}, - \frac{5 \sqrt{6}}{9} F_{3} \}$ 
of $\mathfrak{t}^{3}$.

Set $v_{1} = \frac{\sqrt{5}}{6} {}^t\!(-1, -1, 1, i)$, 
which is horizontal at $p_{0}$ and $|v_{1}|_{\tilde{g}}= 1$. 
Denote $X_{0} = \frac{1}{2} {}^t\!(-1, -1, 1, i)$, which is horizontal at $p_{0}$ 
and $X_{i} = \Phi_{i} (X_{0})$ for $i=1, 2, 3$. 
Since 
\begin{align*}
e_{1} = - \frac{5}{3} \xi_{1}, \qquad
e_{2} = \frac{5 \sqrt{6}}{18} (\xi_{3}+X_{3}), \qquad
e_{3} = \frac{5 \sqrt{6}}{18} (\xi_{2}-X_{2}), 
\end{align*}
vectors $v_{2} = e_{1} \times v_{1}, v_{3} = e_{2} \times v_{1}, v_{4} = -e_{3} \times v_{1}$ are
described as 
\begin{align*}
\{ v_{1}, v_{2}, v_{3}, v_{4} \} = 
\left \{ \frac{\sqrt{5}}{3} X_{0}, \frac{\sqrt{5}}{3} X_{1}, 
\frac{\sqrt{30}}{18} (-X_{3}+ 5 \xi_{3}), \frac{\sqrt{30}}{18} (X_{2}+ 5 \xi_{2}) 
\right \}. 
\end{align*}
Define the vector field $V_{i}$ on $T^{3}$ by 
$(V_{i})_{g \cdot p_{0}} = g_{*} v_{i}$, where $g \in T^{3}$.
As in the case $L_{1}$, we obtain 
\begin{align*}
(\tilde{\nabla}_{e_{i}}^{\perp_{A_{1}}} V_{j}) = 
\frac{1}{9}
\left(
\begin{array}{cccc}
3 V_{2} & -3 V_{1} & -12 V_{4} & 12 V_{3}\\
-2V_{3} & 7 V_{4}&  2 V_{1}& -7 V_{2}\\
2 V_{4}& 7 V_{3}& -7 V_{2}& -2 V_{1}
\end{array} 
\right). 
\end{align*}
Then by the trivialization of $\nu$ via $\{ V_{1}, V_{2}, V_{3}, V_{4} \}$, 
we have 
\begin{align*}
D = 
\left(
\begin{array}{cccc}
0       & -e_{1} & -e_{2} & e_{3} \\
e_{1}   & 0       & e_{3}  & e_{2} \\
e_{2}   & -e_{3} & 0      & -e_{1} \\
-e_{3} & -e_{2} & e_{1}  & 0 \\
\end{array} 
\right)
+
\frac{1}{9}
\left(
\begin{array}{cccc}
1           &        &         &            \\
             & 11   &         &            \\
             &       & 21     &            \\
             &       &         & 21 \\
\end{array} 
\right). 
\end{align*}
Suppose $D \psi = - \psi$, where $\psi = {}^t\!(\psi_{1}, \psi_{2}, \psi_{3}, \psi_{4})$ 
and $\psi_{i} \in C^{\infty}(T^{3})$. 
Eliminating $\psi_{2}$ by $\psi_{2} = -\frac{9}{20} (e_{1} (\psi_{1}) + e_{3} (\psi_{3}) + e_{2} (\psi_{4}))$, 
we obtain 
\begin{align}
\left( \frac{10}{9} + \frac{9}{20} e_{1}^{2} \right) \psi_{1} + 
\left( \frac{9}{20} e_{1} e_{3} - e_{2} \right) \psi_{3} + 
\left( \frac{9}{20} e_{1} e_{2} + e_{3} \right) \psi_{4} = 0, \label{T3 deform1} \\ 
\left( \frac{9}{20} e_{1} e_{3} + e_{2} \right) \psi_{1} + 
\left( \frac{10}{3} + \frac{9}{20} e_{3}^{2} \right) \psi_{3} + 
\left( \frac{9}{20} e_{2} e_{3} - e_{1} \right) \psi_{4} = 0, \label{T3 deform2} \\
\left( \frac{9}{20} e_{1} e_{2} - e_{3} \right) \psi_{1} + 
\left( \frac{9}{20} e_{2} e_{3} + e_{1} \right) \psi_{3} + 
\left( \frac{10}{3} + \frac{9}{20} e_{2}^{2} \right) \psi_{4} = 0. \label{T3 deform3}
\end{align}

Define the smooth function $f_{\gamma} \in C^{\infty}(T^{3}, \mathbb{C})$ 
for  $\gamma = (\gamma_{1}, \gamma_{2}, \gamma_{3}) \in \mathbb{Z}^{3}$ 
on $T^{3} \cong  (\mathbb{R}/ 2 \pi \mathbb{Z})^{3}$ 
by 
$
f_{\gamma}(\theta_{1}, \theta_{2}, \theta_{3}) = \exp (i \sum_{j = 1}^{3} \gamma_{j} \theta_{j}).  
$
Identifying $e_{i} \in \mathfrak{t}^{3}$ with the left invariant differential operator on $T^{3}$, 
we have 
\begin{align*}
e_{1}(f_{\gamma}) = \frac{5}{3} \gamma_{1} i f_{\gamma}, \qquad
e_{2}(f_{\gamma}) = \frac{5 \sqrt{6}}{9} \gamma_{2} i f_{\gamma}, \qquad
e_{3}(f_{\gamma}) = -\frac{5 \sqrt{6}}{9} \gamma_{3} i f_{\gamma}.  
\end{align*}
By a Fourier series expansion, set 
\begin{align*}
\psi_{1} = \sum_{\gamma \in \mathbb{Z}^{3}} C_{\gamma} f_{\gamma}, \qquad
\psi_{2} = \sum_{\gamma \in \mathbb{Z}^{3}} D_{\gamma} f_{\gamma}, \qquad
\psi_{3} = \sum_{\gamma \in \mathbb{Z}^{3}} E_{\gamma} f_{\gamma}, 
\end{align*}
where $C_{\gamma}, D_{\gamma}, E_{\gamma} \in \mathbb{C}$.
Then (\ref{T3 deform1}), (\ref{T3 deform2}), and (\ref{T3 deform3}) are equivalent to 
$M_{\gamma} {}^t\!(C_{\gamma}, D_{\gamma}, E_{\gamma}) = 0$, where 
\begin{align*}
M_{\gamma}
=
\left(
\begin{array}{ccc}
8-9 \gamma_{1}^{2}                                        & 3 \sqrt{6} \gamma_{1} \gamma_{3} - 4 \sqrt{6} \gamma_{2} i  
& -3 \sqrt{6} \gamma_{1} \gamma_{2} - 4 \sqrt{6} \gamma_{3} i  \\
3 \sqrt{6} \gamma_{1} \gamma_{3} + 4 \sqrt{6} \gamma_{2} i  & -6 \gamma_{3}^{2} + 24       & 6 \gamma_{2} \gamma_{3} - 12 \gamma_{1} i\\
-3 \sqrt{6} \gamma_{1} \gamma_{2} + 4 \sqrt{6} \gamma_{3} i    & 6 \gamma_{2} \gamma_{3} + 12 \gamma_{1} i    & -6 \gamma_{2}^{2} + 24
\end{array} 
\right). 
\end{align*}
To obtain a nontrivial solution ${}^t\!(C_{\gamma}, D_{\gamma}, E_{\gamma}) \neq 0$, 
\begin{align*}
\det M_{\gamma} = 16 \left \{ (9 \gamma_{1}^{2} + 6 \gamma_{2}^{2} + 6 \gamma_{3}^{2} -22)^{2} 
+ 4(12(\gamma_{2}^{2} + \gamma_{3}^{2}) -49) \right \}
\end{align*}
must vanish. We see that $\det M_{\gamma} = 0$ if and only if 
\begin{align} \label{solution gamma}
(\gamma_{1}, \gamma_{2}, \gamma_{3}) = \pm (2, 0, 0), \pm (0, 2, 0), \pm (0, 0, 2), \pm (0, 1, 1), \pm (0, 1, -1). 
\end{align}
For each $\gamma$ in (\ref{solution gamma}), we can check $\dim \ker M_{\gamma} = 1$. 
Moreover, we have 
$C_{\gamma} = \overline{C_{- \gamma}}, D_{\gamma} = \overline{D_{- \gamma}}$, 
and $E_{\gamma} = \overline{E_{- \gamma}}$ 
so that every $\psi_{j}$ is $\mathbb{R}$-valued. 
Hence 
we obtain $\dim_{\mathbb{R}} \{ \psi \in C^{\infty}(A_{1}, \nu); D \psi = - \psi \} = 10$.

Since 
$\dim_{\mathbb{R}} {\rm Sp}(1) {\rm Sp}(2)/ T^{3} = 10$, 
${\rm Sp}(1) {\rm Sp}(2)$ induces 
$10$-dimensional 
associative deformations of $A_{1}$ and we obtain the following.

\begin{prop} 
The associative deformations of $A_{1}$ are trivial. 
Its deformation space is ${\rm Sp}(1) {\rm Sp}(2)/ T^{3}$.
\end{prop}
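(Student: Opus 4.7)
The plan is to invoke Proposition \ref{deform_asso_NP} and show that the infinitesimal associative deformation space $\{\psi \in C^\infty(A_1, \nu) \mid D\psi = -\psi\}$ has real dimension exactly $10$, which then matches $\dim_{\mathbb{R}} {\rm Sp}(1){\rm Sp}(2)/T^3 = 10$; since the ambient symmetry group already produces $10$ independent infinitesimal deformations, every infinitesimal deformation must come from the ${\rm Sp}(1){\rm Sp}(2)$-action, and triviality follows.

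To compute the kernel, I would first choose a base point $p_0 = \tfrac12\,{}^t\!(1,1,1,i) \in A_1$, write the generators $F_1^*, F_2^*, F_3^*$ of the $T^3$-action at $p_0$ explicitly, and use the canonical variation metric $\tilde g$ to produce an oriented orthonormal frame $\{e_1,e_2,e_3\}$ of $TA_1$ with $e_3 = e_1\times e_2$ as in Remark \ref{def of cross product}. Then I would construct a horizontal unit normal $v_1$, extend it to the global trivialization $\{V_1,V_2,V_3,V_4\}$ via the recipe of Lemma \ref{frame for deform}, and use Lemma \ref{relation of connection 3-sasaki} together with Lemma \ref{LC conn on A} to compute $\tilde\nabla_{e_i}^{\perp_{A_1}} V_1$ and $\tilde\nabla_{e_i}^{\top_{A_1}} e_j$. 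Feeding these into Lemma \ref{conn of Vj} yields the full $4\times 3$ table of $\tilde\nabla_{e_i}^{\perp_{A_1}}V_j$, from which $D$ becomes the explicit first-order operator on $T^3$ written in the proposition's setup.

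Next, writing $\psi = {}^t\!(\psi_1,\psi_2,\psi_3,\psi_4)$, the equation $D\psi = -\psi$ decomposes into $4$ linear first-order PDEs on $T^3$; eliminating $\psi_2$ algebraically reduces the system to three equations in $(\psi_1,\psi_3,\psi_4)$. Since $T^3$ is a torus, Fourier expansion $\psi_j = \sum_\gamma (\cdot)_\gamma f_\gamma$ with $f_\gamma = \exp(i\langle \gamma, \theta\rangle)$ diagonalizes $e_1,e_2,e_3$ simultaneously, turning the problem into an algebraic condition $M_\gamma\, {}^t\!(C_\gamma,D_\gamma,E_\gamma) = 0$ for each $\gamma \in \mathbb{Z}^3$. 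A nontrivial solution exists exactly when $\det M_\gamma = 0$, a polynomial condition in $\gamma$ that cuts out a finite explicit set of lattice points. Checking $\dim\ker M_\gamma = 1$ for each such $\gamma$ and imposing the reality conditions $C_\gamma = \overline{C_{-\gamma}}$, $D_\gamma = \overline{D_{-\gamma}}$, $E_\gamma = \overline{E_{-\gamma}}$ yields the total real dimension $10$.

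The main obstacle is organizational: the intermediate computation of $D$ in the frame $\{V_j\}$ involves several sign and coefficient checks (Lemmas \ref{relation of connection 3-sasaki} and \ref{conn of Vj}), and then the finiteness of the $\gamma$ satisfying $\det M_\gamma = 0$ relies on recognizing that $\det M_\gamma$ is a sum of two nonnegative quantities that must both vanish. Once the integer solutions are enumerated and the $\ker M_\gamma$ dimensions are confirmed, the dimension count is immediate. Comparing with $\dim {\rm Sp}(1){\rm Sp}(2) - \dim T^3 = 13 - 3 = 10$ finishes the proof and identifies the deformation space as the homogeneous space ${\rm Sp}(1){\rm Sp}(2)/T^3$.
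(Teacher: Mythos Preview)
Your proposal is correct and follows essentially the same route as the paper: choose the base point $p_0=\tfrac12{}^t(1,1,1,i)$, build the frame $\{e_i\}$ and the normal frame $\{V_j\}$ via Lemma~\ref{frame for deform}, compute $\tilde\nabla^{\perp_{A_1}}_{e_i}V_j$ using Lemmas~\ref{relation of connection 3-sasaki}, \ref{conn of Vj}, \ref{LC conn on A}, eliminate $\psi_2$, Fourier-expand on $T^3$, and solve $\det M_\gamma=0$ to find ten real solutions matched by the ${\rm Sp}(1){\rm Sp}(2)$-orbit. One small caveat: the expression the paper obtains, $\det M_\gamma = 16\{(9\gamma_1^2+6\gamma_2^2+6\gamma_3^2-22)^2 + 4(12(\gamma_2^2+\gamma_3^2)-49)\}$, is \emph{not} a sum of two nonnegative terms (the second summand is negative for small $\gamma$), so finiteness of the solution set comes from the quartic growth rather than from nonnegativity; you will need to enumerate the integer solutions directly.
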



\subsection{The case $A_{2}$}

Let ${\rm SU}(2)$ act on $S^{7}$ by (\ref{irr SU2}). 
Then $A_{2}$ is the ${\rm SU}(2)$-orbit through $p_{0} = {}^t\!(1, 0, 0, 0)$. 
By (\ref{vfs A2}), 
$\{ e_{1}, e_{2}, e_{3} \} = \{ \frac{\sqrt{15}}{9} E_{1}, \frac{\sqrt{15}}{9} E_{2}, - \frac{5}{9} E_{3} \}$ 
is the induced oriented 
orthonormal basis 
of $\mathfrak{su}(2)$, 
where 
$E_{i} \in \mathfrak{su}(2)$ for $i=1,2,3$ is defined in (\ref{basis of su2}).

Set $v_{1} = \frac{5}{3} {}^t\!(0, 1, 0, 0) = - \frac{5}{3} \xi_{2} \in \nu_{p_{0}}$, 
which satisfies $|v_{1}|_{\tilde{g}}= 1$. 
Denote $X_{0} = {}^t\!(0, 0, 0, 1)$, which is horizontal at $p_{0}$ 
and $X_{i} = \Phi_{i} (X_{0})$ for $i=1, 2, 3$. 
Since 
\begin{align*}
e_{1} = - \frac{\sqrt{5}}{3} X_{0}, \qquad
e_{2} =  \frac{\sqrt{5}}{3} X_{1}, \qquad
e_{3} = - \frac{5}{3} \xi_{1}, 
\end{align*}
vectors $v_{2} = e_{1} \times v_{1}, v_{3} = e_{2} \times v_{1}, v_{4} = -e_{3} \times v_{1}$ are
described as 
\begin{align*}
\{ v_{1}, v_{2}, v_{3}, v_{4} \} = 
\left \{ 
-\frac{5}{3} \xi_{2}, 
\frac{\sqrt{5}}{3} X_{2}, \frac{\sqrt{5}}{3} X_{3}, 
\frac{5}{3} \xi_{3}
\right \}. 
\end{align*}
Define the vector field $V_{i}$ in the neighborhood of $p_{0}$ of $A_{2}$ by 
$(V_{i})_{g \cdot p_{0}} = g_{*} v_{i}$, where $g \in {\rm SU}(2)$.
As in the case $L_{1}$, we obtain 
\begin{align*}
(\tilde{\nabla}_{e_{i}}^{\perp_{A_{2}}} V_{j}) = 
\frac{1}{9}
\left(
\begin{array}{cccc}
-3 V_{2} & 3 V_{1} & -3 V_{4} & 3 V_{3}\\
-3 V_{3} & 3 V_{4}&  3 V_{1}   &   -3 V_{2}\\
-15 V_{4}& 17 V_{3}& -17 V_{2}& 15 V_{1}
\end{array} 
\right). 
\end{align*}
Then by the local trivialization of $\nu$ via $\{ V_{1}, V_{2}, V_{3}, V_{4} \}$, 
we have 
$D= D_{-1, 23/9}$, where $D_{\lambda, \mu}$ is defined in (\ref{D lambda nu}). 
Setting $(p, q, \lambda, \mu, \alpha) = (\frac{\sqrt{15}}{9}, - \frac{5}{9}, -1, \frac{23}{9}, -1)$ in (\ref{gen_D5}), 
we see that 
\begin{align*}
\Psi_{2} = \langle \rho_{6} (\cdot) v^{(6)}_{5}, u \rangle
\end{align*}
for $u \in V_{2}$. 
Since $\ker (e_{1} -i e_{2}) \cap \ker (i e_{3}) = \mathbb{C}$, 
(\ref{gen_D1}) and (\ref{gen_D2}) imply that 
\begin{align*}
\Psi_{1} = - \frac{\sqrt{10}}{5} \langle \rho_{6} (\cdot) v^{(6)}_{6}, u \rangle + C
\end{align*}
for $C \in \mathbb{C}$. 
These solutions are $\mathbb{Z}_{3}$-equivariant, 
and hence
we obtain $\dim_{\mathbb{R}} \{ \psi \in C^{\infty}(A_{2}, \nu); D \psi = - \psi \} = 16$. 

Since 
$\dim_{\mathbb{R}} {\rm Sp}(1) {\rm Sp}(2)/ {\rm U}(1) {\rm SU}(2) = 9$, 
${\rm Sp}(1) {\rm Sp}(2)$ induces 
$9$-dimensional 
associative deformations of $A_{2}$. 
Thus $A_{2}$ can have at most 7-dimensional family of nontrivial
associative deformations. In fact, we obtain the following. 

\begin{prop} \label{nontrivial deform A2}
All associative deformations of $A_{2}$ are 
induced by the ${\rm Sp}(1) {\rm Sp}(2)$-action
and by the ${\rm PSp}(2, \mathbb{C})$-action on $\mathbb{C}P^{3}$ 
via the Hopf lift.
In other words, 
all the associative deformations of $A_{2}$ are given by the following. 

\begin{itemize}
\item
the ${\rm PSp}(2, \mathbb{C})$-action on $\mathbb{C}P^{3}$ 
via the Hopf lift, 
which corresponds to 
the deformation of $p_{1} (A_{2})$ as a horizontal holomorphic curve, 
where $p_{1}: S^{7} \rightarrow \mathbb{C}P^{3}$ is a projection, 
\item
the action generated by $j, k \in {\rm Sp}(1)$.
\end{itemize}
\end{prop}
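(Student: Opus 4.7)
The plan is to construct sixteen linearly independent infinitesimal associative deformations of $A_{2}$ from the two geometric sources named in the statement, matching the upper bound $\dim_{\mathbb{R}} \ker(D+\mathrm{id}) = 16$ established just above.

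First I would determine the stabilizer of $A_{2}$ inside ${\rm Sp}(1){\rm Sp}(2)$. The embedded ${\rm SU}(2)_{R}$ defining $A_{2}$ fixes it setwise by construction, contributing three dimensions. Moreover, the circle $\mathrm{U}(1)_{L}=\{e^{i\theta}\cdot\mathrm{id}\}\subset{\rm Sp}(1)_{L}$ acts on $p_{0}={}^{t}(1,0,0,0)$ by a phase, and from (\ref{irr SU2}) with $b=0$, $a=e^{i\theta/3}$ this phase is reabsorbed by a diagonal element of ${\rm SU}(2)_{R}$; since $\mathrm{U}(1)_{L}$ commutes with ${\rm SU}(2)_{R}$, this shows $\mathrm{U}(1)_{L}$ preserves $A_{2}$ setwise. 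Thus $\mathrm{Stab}(A_{2})\supseteq \mathrm{U}(1)\cdot{\rm SU}(2)$ of real dimension $4$, and the ${\rm Sp}(1){\rm Sp}(2)$-orbit of $A_{2}$ is $9$-dimensional. Projecting the fundamental Killing fields from $\mathfrak{sp}(1)_{L}\oplus\mathfrak{sp}(2)_{R}$ onto $\nu$ produces nine independent elements of $\ker(D+\mathrm{id})$, which split naturally into seven from $\mathfrak{sp}(2)_{R}$ (moving the Veronese curve $p_{1}(A_{2})$ inside its ${\rm Sp}(2)$-orbit in $\mathbb{C}P^{3}$) and two from the residual ${\rm Sp}(1)_{L}$-directions generated by $j$ and $k$.

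Second I would bring in the ${\rm PSp}(2,\mathbb{C})$-action, the complexification of ${\rm Sp}(2)_{R}$. This is precisely the group of biholomorphisms of $\mathbb{C}P^{3}$ preserving the horizontal distribution of the twistor fibration $p_{2}:\mathbb{C}P^{3}\to S^{4}$, so it sends horizontal $I_{1}$-holomorphic curves to horizontal $I_{1}$-holomorphic curves, whose Hopf lifts are associative by Remark \ref{basic concrete example of asso}. The reparametrization subgroup of the Veronese curve $p_{1}(A_{2})$ gives $\mathrm{PSL}(2,\mathbb{C})\subset{\rm PSp}(2,\mathbb{C})$ of real dimension $6$, so the ${\rm PSp}(2,\mathbb{C})$-orbit of $p_{1}(A_{2})$ has real dimension $20-6=14$. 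Seven of these coincide with the $\mathfrak{sp}(2)_{R}$-deformations from step one; the remaining seven, parametrized by the ``imaginary'' complement $i\,\mathfrak{sp}(2,\mathbb{R})\subset\mathfrak{sp}(2,\mathbb{C})$, yield via the Hopf lift seven further infinitesimal associative deformations of $A_{2}$ that are independent of the Killing-field deformations, since the $i\,\mathfrak{sp}(2,\mathbb{R})$-directions do not preserve the Einstein metric $\tilde g$.

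Combining gives $7+7+2 = 16$ independent infinitesimal associative deformations lying in $\ker(D+\mathrm{id})$, matching the previously-established upper bound and proving the statement. The main obstacle will be step two: one must verify concretely that Hopf-lifting an infinitesimal horizontal $I_{1}$-holomorphic deformation of the Veronese curve produces a section of $\nu$ satisfying $D\psi=-\psi$, and then identify the resulting seven deformations inside the explicit kernel basis $\{\langle\rho_{6}(\cdot)v^{(6)}_{k},u\rangle\}$ found above. This matching requires careful bookkeeping of the ${\rm SU}(2)$-equivariance of the infinitesimal deformation equation on $\mathbb{C}P^{3}$ restricted along the Veronese curve, which is where the correspondence between null-torsion and horizontal holomorphic curves of Xu cited in the introductory remark plays its role.
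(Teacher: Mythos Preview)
Your approach is essentially the same as the paper's: count dimensions by showing the stabilizer of $p_{1}(A_{2})$ in ${\rm PSp}(2,\mathbb{C})$ is ${\rm PSL}(2,\mathbb{C})$ (giving a $14$-dimensional family of horizontal holomorphic deformations), add the two directions from $j,k\in{\rm Sp}(1)_{L}$, and match the total $16$ against the already-computed $\dim\ker(D+\mathrm{id})$. One small correction: the Xu null-torsion/horizontal correspondence you invoke at the end is not needed here---$p_{1}(A_{2})$ is already a horizontal $I_{1}$-holomorphic curve, and the paper reserves Xu's result for the analysis of $A_{3}$.
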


Note that 
${\rm PSp}(2, \mathbb{C})$ acts on $\mathbb{C}P^{3}$ 
as the group of biholomorphic maps which preserve 
the horizontal distribution \cite{Bolton}, \cite{Ohnita_def}.

\begin{proof}
First description is an analogue of \cite{Ohnita_def}, \cite{K deform} 
and we omit the proof. 
The second description follows from the next lemma.
\end{proof}

\begin{lem} \label{deform pi A2}
The subgroup of ${\rm PSp}(2, \mathbb{C})$ 
which preserves $p_{1} (A_{2})$ is isomorphic to 
${\rm PSL}(2, \mathbb{C})$. 
Thus the deformation space of $p_{1} (A_{2})$ as a holomorphic curve 
is ${\rm PSp}(2, \mathbb{C})/ {\rm PSL}(2, \mathbb{C})$, 
which is 14-dimensional. 
\end{lem}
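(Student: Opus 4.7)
The plan is to identify $\Sigma := p_{1}(A_{2})$ as the Veronese (rational normal) curve of degree $3$ in $\mathbb{C}P^{3}$, coming from the irreducible ${\rm SL}(2,\mathbb{C})$-representation on ${\rm Sym}^{3}(\mathbb{C}^{2})\cong \mathbb{C}^{4}$. Because the weights $\pm 3, \pm 1$ pair up into sign-opposite pairs, this representation preserves a nondegenerate alternating form, so it factors through an embedding ${\rm SL}(2,\mathbb{C})\hookrightarrow {\rm Sp}(2,\mathbb{C})$; let $H\cong {\rm PSL}(2,\mathbb{C})$ be the resulting subgroup of ${\rm PSp}(2,\mathbb{C})$. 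By construction, $H$ is the complexification of the ${\rm SU}(2)$-action of (\ref{irr SU2}), and $\Sigma$ is a single $H$-orbit (the orbit of the highest weight line $[1:0:0:0]$). In particular $H\subseteq \mathrm{Stab}_{{\rm PSp}(2,\mathbb{C})}(\Sigma)$.

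For the reverse inclusion, take any $g$ in ${\rm Stab}_{{\rm PSp}(2,\mathbb{C})}(\Sigma)$. Restriction to $\Sigma\cong \mathbb{C}P^{1}$ gives a homomorphism
\begin{equation*}
\Psi:\ {\rm Stab}_{{\rm PSp}(2,\mathbb{C})}(\Sigma)\ \longrightarrow\ {\rm Aut}(\mathbb{C}P^{1})={\rm PGL}(2,\mathbb{C}).
\end{equation*}
I would argue $\Psi$ is injective by invoking the standard fact that a rational normal curve of degree $3$ in $\mathbb{C}P^{3}$ contains $5$ points in general position; any element of ${\rm PGL}(4,\mathbb{C})$ fixing $\Sigma$ pointwise therefore fixes $5$ points in general position in $\mathbb{C}P^{3}$, hence is trivial. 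Surjectivity is automatic: the restriction of $H$ to $\Sigma$ already gives the full ${\rm PGL}(2,\mathbb{C})$, since the action of $H$ on $\mathbb{C}P^{1}$ (parametrizing $\Sigma$) is the standard one. Combining both, ${\rm Stab}_{{\rm PSp}(2,\mathbb{C})}(\Sigma)=H\cong{\rm PSL}(2,\mathbb{C})$.

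The dimension assertion is then immediate: $\dim_{\mathbb{R}}{\rm PSp}(2,\mathbb{C})=20$ and $\dim_{\mathbb{R}}{\rm PSL}(2,\mathbb{C})=6$, so the homogeneous space ${\rm PSp}(2,\mathbb{C})/{\rm PSL}(2,\mathbb{C})$ is of real dimension $14$. The interpretation of this quotient as the deformation space of $p_{1}(A_{2})$ as a horizontal holomorphic curve uses the general fact (see \cite{Bolton}, \cite{Ohnita_def}) that ${\rm PSp}(2,\mathbb{C})$ acts on $\mathbb{C}P^{3}$ as the full group of biholomorphisms preserving the horizontal distribution, together with the rigidity of the Veronese curve among horizontal holomorphic curves of its degree, which is invoked in the proof of Proposition \ref{nontrivial deform A2} and not re-proved here.

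The main obstacle I foresee is the injectivity of $\Psi$: one must be careful that being inside ${\rm PSp}(2,\mathbb{C})$ (rather than all of ${\rm PGL}(4,\mathbb{C})$) does not allow unexpected elements to fix $\Sigma$ pointwise. This is dispatched cleanly by the general-position argument above, since the obstruction to triviality is projective, not symplectic. A minor subtlety worth double-checking is the correct identification of the centers: the center $\{\pm 1\}\subset{\rm Sp}(2,\mathbb{C})$ pulls back to $\{\pm 1\}\subset{\rm SL}(2,\mathbb{C})$ under the ${\rm Sym}^{3}$-embedding (odd symmetric power), so the induced map ${\rm PSL}(2,\mathbb{C})\to{\rm PSp}(2,\mathbb{C})$ is genuinely injective; this is what allows $H$ to be isomorphic to ${\rm PSL}(2,\mathbb{C})$ rather than a double cover of it.
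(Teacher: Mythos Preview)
Your proof is correct and rests on the same core idea as the paper's: the stabilizer comes from the irreducible ${\rm Sym}^{3}$ embedding of ${\rm SL}(2,\mathbb{C})$, and the twisted cubic $p_{1}(A_{2})$ has projective automorphism group ${\rm PGL}(2,\mathbb{C})$. The organizational difference is minor. The paper first extends (\ref{irr SU2}) to an explicit map ${\rm GL}(2,\mathbb{C})\hookrightarrow {\rm GL}(4,\mathbb{C})$, asserts (as a classical fact) that its image is the full stabilizer of $p_{1}(A_{2})$ in ${\rm PGL}(4,\mathbb{C})$, and then checks by hand that ${\rm GL}(2,\mathbb{C})\cap{\rm Sp}(2,\mathbb{C})={\rm SL}(2,\mathbb{C})$. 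You instead work inside ${\rm PSp}(2,\mathbb{C})$ from the start, supply a proof of the classical fact via the general-position argument (five points on the twisted cubic determine a projectivity), and avoid the intersection computation by observing representation-theoretically that ${\rm Sym}^{3}$ of a symplectic $2$-space is symplectic. Your version is more self-contained; the paper's is terser but leans on the unproved assertion about the full ${\rm PGL}(4,\mathbb{C})$-stabilizer.
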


\begin{proof}
The inclusion ${\rm SU}(2) \hookrightarrow {\rm Sp}(2)$ of (\ref{irr SU2}) is 
canonically extended to 
$GL(2, \mathbb{C}) \hookrightarrow {\rm GL}(4, \mathbb{C})$:
\begin{align*}
(g_{ij}) 
\mapsto 
\left(
\begin{array}{cccc}
g_{11}^{3}                    & g_{12}^{3}                     & \sqrt{3} g_{11} g_{12}^{2} & \sqrt{3} g_{11}^{2} g_{12} \\
g_{21}^{3}                    & g_{22}^{3}                     & \sqrt{3} g_{21} g_{22}^{2} & \sqrt{3} g_{21}^{2} g_{22} \\
\sqrt{3} g_{11} g_{21}^{2} & \sqrt{3} g_{12} g_{22}^{2} & g_{22}(g_{11} g_{22} + 2 g_{12} g_{21}) & g_{21} (2 g_{11} g_{22} + g_{12} g_{21}) \\
\sqrt{3} g_{11}^{2} g_{21} & \sqrt{3} g_{12}^{2} g_{22} &  g_{12} (2 g_{11} g_{22} + g_{12} g_{21})&  g_{11}(g_{11} g_{22} + 2 g_{12} g_{21})  \\
\end{array} 
\right), 
\end{align*}
which is the group of biholomorphic maps which preserve $p_{1} (A_{2})$. 
We can check that 
$GL(2, \mathbb{C}) \cap {\rm Sp}(2, \mathbb{C}) = {\rm SL}(2, \mathbb{C})$, 
and hence we obtain the proof.
\end{proof}

\subsection{The case $A_{3}$}

Let ${\rm SU}(2)$ act on $S^{7}$ by (\ref{irr SU2}). 
Then $A_{3}$ is the ${\rm SU}(2)$-orbit through $p_{0} = {}^t\!(0, 0, 1, 0)$. 
By (\ref{vfs A3}), 
$\{ e_{1}, e_{2}, e_{3} \} = \{ \frac{5 \sqrt{19}}{57} E_{1}, \frac{5\sqrt{19}}{57} E_{2}, \frac{5}{3} E_{3} \}$ 
is the induced oriented 
orthonormal basis 
of $\mathfrak{su}(2)$, 
where 
$E_{i} \in \mathfrak{su}(2)$ for $i=1,2,3$ is defined in (\ref{basis of su2}).

Set $v_{1} = \frac{\sqrt{5}}{3} {}^t\!(1, 0, 0, 0) \in \nu_{p_{0}}$, 
which is horizontal at $p_{0}$ and $|v_{1}|_{\tilde{g}}= 1$. 
Denote $X_{0} = {}^t\!(1, 0, 0, 0)$, which is horizontal at $p_{0}$ 
and $X_{i} = \Phi_{i} (X_{0})$ for  $i=1,2,3$. 
Since 
\begin{align} \label{onb A3}
e_{1} = - \frac{5 \sqrt{19}}{57} (2 \xi_{2} + \sqrt{3} X_{2}), \qquad
e_{2} =  \frac{5 \sqrt{19}}{57} (-2 \xi_{3} + \sqrt{3} X_{3}), \qquad
e_{3} =  \frac{5}{3} \xi_{1}, 
\end{align}
vectors $v_{2} = e_{1} \times v_{1}, v_{3} = e_{2} \times v_{1}, v_{4} = -e_{3} \times v_{1}$ are
described as 
\begin{align*}
\{ v_{1}, v_{2}, v_{3}, v_{4} \} = 
\left \{ 
\frac{\sqrt{5}}{3} X_{0}, 
\frac{\sqrt{95}}{57} (-5 \sqrt{3} \xi_{2} + 2 X_{0}), 
\frac{\sqrt{95}}{57} (5 \sqrt{3} \xi_{3} + 2 X_{3}), 
\frac{\sqrt{5}}{3} X_{1}
\right \}. 
\end{align*}
Define the vector field $V_{i}$ on ${\rm SU}(2)$ by 
$(V_{i})_{g \cdot p_{0}} = g_{*} v_{i}$, where $g \in {\rm SU}(2)$.
As in the case $L_{1}$, we obtain 
\begin{align*}
(\tilde{\nabla}_{e_{i}}^{\perp_{A_{3}}} V_{j}) = 
\frac{1}{57}
\left(
\begin{array}{cccc}
-31 V_{2} & 31 V_{1}  & -31 V_{4} & 31 V_{3}\\
-31 V_{3} & 31 V_{4}  &  31 V_{1}   & -31 V_{2}\\
361 V_{4}& -119 V_{3}& 119 V_{2}& -361 V_{1}
\end{array} 
\right). 
\end{align*}
Then by the local trivialization of $\nu$ via $\{ V_{1}, V_{2}, V_{3}, V_{4} \}$, 
we have 
$D= D_{141/19, -1}$, where $D_{\lambda, \mu}$ is defined in (\ref{D lambda nu}). 
Setting $(p, q, \lambda, \mu, \alpha) = (\frac{5 \sqrt{19}}{57}, \frac{5}{3}, \frac{141}{19}, -1, -1)$ in (\ref{gen_D5}), 
we see that 
\begin{align*}
\Psi_{2} = \langle \rho_{6} (\cdot) v^{(6)}_{4}, u \rangle + C
\end{align*}
for $u \in V_{2}, C \in \mathbb{C}$. 
Since $\ker (i e_{3} - \frac{160}{19}) = \{ 0 \}$, 
(\ref{gen_D1}) and (\ref{gen_D2}) imply that 
\begin{align*}
\Psi_{1} = - \frac{\sqrt{190}}{10} \langle \rho_{6} (\cdot) v^{(6)}_{5}, u \rangle. 
\end{align*} 
Hence 
we obtain $\dim_{\mathbb{R}} \{ \psi \in C^{\infty}(A_{3}, \nu); D \psi = - \psi \} = 16$. 

Since 
$\dim_{\mathbb{R}} {\rm Sp}(1) {\rm Sp}(2)/ {\rm SU}(2) = 10$, 
${\rm Sp}(1) {\rm Sp}(2)$ induces 
$10$-dimensional 
associative deformations of $A_{3}$. 
Thus $A_{3}$ can have at most 6-dimensional family of nontrivial
associative deformations.

The associative deformation space of $A_{3}$ 
is explained by a one-to-one correspondence 
between null-torsion $I_{1}^{'}$-holomorphic curves 
and horizontal holomorphic curves in $\mathbb{C}P^{3}$ (\cite{Xu pseudo}).

Decompose $T \mathbb{C}P^{3} = \underline{\mathcal{H}} \oplus \underline{\mathcal{V}}$,  
where $\underline{\mathcal{V}}$ is a vector bundle tangent to the fibers of 
$p_{2}: \mathbb{C}P^{3} \rightarrow S^{4}$, 
and $\underline{\mathcal{H}}$ is its orthogonal complement bundle of $\underline{\mathcal{V}}$. 
Define a map $P: \underline{\mathcal{H}} - \{ 0 \} \rightarrow \mathbb{C}P^{3}$ 
by $P(v)= [\tilde{v}]$, 
where $\tilde{v} \in \mathcal{H} \subset T S^{7}$ 
is a horizontal lift of $v$ with respect to $p_{1}: S^{7} \rightarrow \mathbb{C}P^{3}$ 
and we identify $\tilde{v}$ with a vector in $\mathbb{C}^{4}$.

Let $pr_{\underline{\mathcal{H}}}: T \mathbb{C}P^{3} \rightarrow \underline{\mathcal{H}}$ 
be a canonical projection
and $\Sigma \subset \mathbb{C}P^{3}$ be a $I_{1}^{'}$-holomorphic curve with 
$pr_{\underline{\mathcal{H}}}|_{T \Sigma} \neq 0$. 
Then there exist a holomorphic line bundle $L \subset \underline{\mathcal{H}}|_{\Sigma}$ such that 
$pr_{\underline{\mathcal{H}}}(T \Sigma) \subset L$.
If $pr_{\underline{\mathcal{H}}}$ is nowhere vanishing on $\Sigma$, 
$L = pr_{\underline{\mathcal{H}}}(T \Sigma)$. 
Denote by $L^{\perp_{\underline{\mathcal{H}}}} \subset \underline{\mathcal{H}}|_{\Sigma}$ 
the orthonormal complement bundle of $L$ 
and set $\hat{\Sigma} = P (L^{\perp_{\underline{\mathcal{H}}}}- \{ 0 \})$. 

\begin{definition} \label{null-torsion def}
A non-vertical $I_{1}^{'}$-holomorphic curve $\Sigma$ is called 
{\bf null-torsion} if $\hat{\Sigma}$ is a horizontal holomorphic curve. 
\end{definition}

\begin{prop} \cite{Xu pseudo} \label{one-to-one curve}
There is a one-to-one correspondence 
between null-torsion $I_{1}^{'}$-holomorphic curves 
and horizontal holomorphic curves via $\Sigma \mapsto \hat{\Sigma}$. 
\end{prop}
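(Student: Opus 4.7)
The strategy is to exhibit an explicit two-sided inverse to the map $\Sigma \mapsto \hat{\Sigma}$. Given a horizontal holomorphic curve $C \subset \mathbb{C}P^{3}$, its tangent bundle $TC$ lies inside $\underline{\mathcal{H}}|_{C}$ and is a complex line subbundle with respect to $I_{1} = I_{1}'|_{\underline{\mathcal{H}}}$. Let $K = (TC)^{\perp_{\underline{\mathcal{H}}}}$ be its orthogonal complement line bundle inside $\underline{\mathcal{H}}|_{C}$, and set
\begin{align*}
\check{C} := P(K - \{0\}) \subset \mathbb{C}P^{3}.
\end{align*}
The plan is to show that $C \mapsto \check{C}$ is a two-sided inverse to $\Sigma \mapsto \hat{\Sigma}$; in particular $\check{C}$ is always a null-torsion $I_{1}'$-holomorphic curve and $\hat{\check{C}} = C$, while for a null-torsion curve $\Sigma$ one has $\check{\hat{\Sigma}} = \Sigma$.

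First I would verify that $\check{C}$ is a non-vertical $I_{1}'$-holomorphic curve. Since the horizontal lift identifies $\underline{\mathcal{H}}_{y}$ isometrically with a complex 2-plane in $\mathbb{C}^{4}$ and $I_{1}'$ agrees with the standard complex structure on $\underline{\mathcal{H}}$, the map $P$ restricted to the line bundle $K - \{0\}$ is a complex analytic map with respect to $I_{1}'$. Here one uses that $pr_{\underline{\mathcal{H}}}$ is $I_{1}'$-complex linear (immediate from the definition (\ref{almcpxstr CP3})) and that taking orthogonal complements inside the rank-$2$ complex bundle $\underline{\mathcal{H}}$ is a holomorphic operation with respect to $I_{1}$. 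Next, to establish $\hat{\check{C}} = C$, I differentiate the defining formula for $\check{C}$ along $C$: writing a local section $v(y) \in K_{y}$ with horizontal lift $\tilde{v}(y) \in \mathbb{C}^{4}$, the tangent to $\check{C}$ at $[\tilde{v}]$ is computed by projecting $d \tilde{v}$ onto $T_{[\tilde{v}]}\mathbb{C}P^{3}$, and one finds that its horizontal component spans precisely the direction whose perpendicular in $\underline{\mathcal{H}}|_{\check{C}}$ recovers $T C$ under the lift. This is essentially the involutive nature of taking orthogonal complements inside a rank-$2$ bundle, combined with the fact that the horizontal lift is fiberwise isometric.

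For a null-torsion $\Sigma$, the same argument applied to $C = \hat{\Sigma}$ (a horizontal holomorphic curve by Definition \ref{null-torsion def}) yields $\check{\hat{\Sigma}} = \Sigma$, completing the bijection. The main obstacle is the first step: showing that the horizontal lift construction $P$ is genuinely $I_{1}'$-holomorphic on $K - \{0\}$ requires a direct Cauchy--Riemann check that combines the horizontality of $C$ (so that the second fundamental form of $TC \subset \underline{\mathcal{H}}|_{C}$ has no vertical contribution) with the nearly Kähler identity for $\nabla I_{1}'$ on $\mathbb{C}P^{3}$. This is the content of the twistor-transform argument of \cite{Xu pseudo}, which we invoke for the detailed computation.
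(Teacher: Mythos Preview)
The paper does not supply its own proof of this proposition: it is simply quoted from \cite{Xu pseudo} as a known result, so there is nothing in the paper to compare your argument against.

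Your sketch is a reasonable outline of the twistor-transform construction, and the strategy of exhibiting the explicit inverse $C \mapsto \check{C}$ is indeed how the correspondence is established in the literature. However, as written it is not a self-contained proof. The two genuinely nontrivial points are (i) that $\check{C}$ is an $I_{1}'$-holomorphic (as opposed to merely $I_{1}$-holomorphic) curve, and (ii) that the horizontal projection of $T\check{C}$ has orthogonal complement in $\underline{\mathcal{H}}$ lifting back to $C$. You acknowledge (i) at the end and defer it to \cite{Xu pseudo}; your treatment of (ii) (``essentially the involutive nature of taking orthogonal complements'') is too brief, since one must actually compute $pr_{\underline{\mathcal{H}}}(T\check{C})$, which involves differentiating the horizontal lift $\tilde{v}$ and separating horizontal from vertical parts --- this is where the horizontality of $C$ and the structure equations of the twistor fibration enter in an essential way. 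Also, the assertion that ``taking orthogonal complements inside $\underline{\mathcal{H}}$ is a holomorphic operation with respect to $I_{1}$'' is not literally correct without specifying a holomorphic structure on $\underline{\mathcal{H}}$; over a horizontal curve $C$ this can be arranged, but it is not automatic. Since you ultimately invoke \cite{Xu pseudo} for the key computation anyway, your proposal is effectively the same as the paper's treatment: cite the result.
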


Since $p_{1} (A_{3})$ is an image of $\mathbb{C}P^{1}$, 
it is a null-torsion (\cite{Xu pseudo}). 
We see the following. 

\begin{lem} \label{one-to-one A3 A2}
By Proposition \ref{one-to-one curve}, $p_{1} (A_{3})$ corresponds to $p_{1} (A_{2})$. 
\end{lem}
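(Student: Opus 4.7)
The plan is to verify the null-torsion correspondence at the single basepoint $p_{0} = {}^t(0,0,1,0)$ of $A_{3}$ and then extend globally by equivariance. Observe that both $p_{1}(A_{2})$ and $p_{1}(A_{3})$ are orbits of the same ${\rm SU}(2)$-representation (\ref{irr SU2}) on $\mathbb{C}P^{3}$, namely $p_{1}(A_{2}) = {\rm SU}(2)\cdot [1:0:0:0]$ and $p_{1}(A_{3}) = {\rm SU}(2)\cdot [0:0:1:0]$. The construction $\Sigma \mapsto \hat{\Sigma}$ of Proposition \ref{one-to-one curve} uses only the horizontal distribution $\underline{\mathcal{H}}$, the induced Hermitian metric, and the horizontal lift with respect to $p_{1}$, all of which are preserved by this ${\rm SU}(2)$-action. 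Hence $\Sigma \mapsto \hat{\Sigma}$ is ${\rm SU}(2)$-equivariant, and matching one point of $\widehat{p_{1}(A_{3})}$ with one point of $p_{1}(A_{2})$ will suffice.

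First I would identify $L = pr_{\underline{\mathcal{H}}}(T p_{1}(A_{3}))\big|_{[p_{0}]}$ directly from the orthonormal frame (\ref{onb A3}) used in the deformation computation for $A_{3}$: the vector $e_{3} = \tfrac{5}{3}\xi_{1}$ is purely vertical, while the horizontal projections of $e_{1}$ and $e_{2}$ are non-zero scalar multiples of $X_{2}$ and $X_{3}$ respectively. Since $X_{3} = \Phi_{1}(X_{2}) = I_{1} X_{2}$ on $\mathcal{H}$, the horizontal projection spans the complex line $L = \mathbb{C}\cdot X_{2}$ inside $\underline{\mathcal{H}}_{[p_{0}]}$. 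Because $\mathcal{H}_{p_{0}}$ decomposes as $\mathbb{C}\cdot X_{0}\oplus \mathbb{C}\cdot X_{2}$ with $X_{0} = {}^t(1,0,0,0)$, the orthogonal complement is $L^{\perp_{\underline{\mathcal{H}}}} = \mathbb{C}\cdot X_{0}$. Applying $P$ to this line gives $P(X_{0}) = [\,{}^t(1,0,0,0)\,] = [1:0:0:0]$, which is exactly the basepoint of $p_{1}(A_{2})$.

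Combining these two ingredients, the ${\rm SU}(2)$-equivariance of $\Sigma \mapsto \hat{\Sigma}$ yields
\[
\widehat{p_{1}(A_{3})} \;=\; {\rm SU}(2)\cdot \bigl(\widehat{p_{1}(A_{3})}\bigr)\big|_{[p_{0}]} \;=\; {\rm SU}(2)\cdot[1:0:0:0] \;=\; p_{1}(A_{2}),
\]
which is the desired identification. The only substantive step is the horizontal/vertical decomposition of $T_{p_{0}} A_{3}$, and this simply reuses the frame (\ref{onb A3}) already computed in the proof of Proposition \ref{classification irr SU2}; once $L$ and $L^{\perp_{\underline{\mathcal{H}}}}$ are identified, the rest is automatic from the definition of $P$ and from equivariance.
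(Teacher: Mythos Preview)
Your proof is correct and follows essentially the same route as the paper: both use the frame (\ref{onb A3}) at $p_{0}={}^t(0,0,1,0)$ to see that the horizontal part of $T_{p_{0}}A_{3}$ is the complex line through $X_{2}$, hence $L^{\perp_{\underline{\mathcal{H}}}}$ is the complex line through $X_{0}={}^t(1,0,0,0)$, and then globalize via the ${\rm SU}(2)$-action. The only cosmetic difference is that you phrase the last step as ${\rm SU}(2)$-equivariance of $\Sigma\mapsto\hat\Sigma$, whereas the paper writes the bundle $\tilde{L}^{\perp_{\underline{\mathcal{H}}}}$ explicitly as the ${\rm SU}(2)$-translate of ${\rm span}\{X_{0},X_{1}\}$.
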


\begin{proof}
Since $pr_{\underline{\mathcal{H}}}$ is nowhere vanishing on $p_{1} (A_{3})$, 
$L = pr_{\underline{\mathcal{H}}}(T (p_{1} (A_{3})))$. 
By (\ref{onb A3}), 
$T_{p_{1} (p_{0})} (p_{1} (A_{3}))$ is a projection of 
the subspace of $T_{p_{0}} S^{7}$ spanned by 
$-2 \xi_{2} - \sqrt{3} X_{2}$ and $-2 \xi_{3} + \sqrt{3} X_{3}$. 
Thus 
the vector bundle $\tilde{L}^{\perp_{\underline{\mathcal{H}}}}$ over $A_{3}$ 
whose fiber at $g \cdot p_{0}$, where $g \in {\rm SU}(2)$, is spanned 
by $g_{*} X_{0}$ and $g_{*} X_{1}$ 
satisfies $(p_{1})_{*} (\tilde{L}^{\perp_{\underline{\mathcal{H}}}}) = L^{\perp_{\underline{\mathcal{H}}}}$, 
which implies that 
\begin{align*}
\widehat{p_{1} (A_{3})} 
&= [L^{\perp_{\underline{\mathcal{H}}}} - \{ 0 \} ] \\
&= \{ [g {}^t\!(1,0,0,0)] \in \mathbb{C}P^{3}; g \in {\rm SU}(2) \} 
= p_{1} (A_{2}).
\end{align*}
\end{proof}

\begin{rem}
We easily see that $\widehat{p_{1} (A_{1})} = p_{1} (A_{1})$, and hence 
$p_{1} (A_{1})$ is not null-torsion. 
\end{rem}

Since the deformation space of $p_{1} (A_{2})$ 
as a horizontal holomorphic curve is 14-dimensional by Proposition \ref{nontrivial deform A2}, 
we obtain the following result.

\begin{prop} \label{nontrivial deform A3}
All the associative deformations of $A_{3}$ are given by the following. 

\begin{itemize}
\item
the Hopf lift of null-torsion $I_{1}^{'}$-holomorphic curves, 
which correspond 
to horizontal holomorphic curves 
obtained by deforming $p_{1} (A_{2})$ 
by the ${\rm PSp}(2, \mathbb{C})$-action on $\mathbb{C}P^{3}$ 
by Proposition \ref{one-to-one curve}, 
\item
the action generated by $j, k \in {\rm Sp}(1)$.
\end{itemize}
\end{prop}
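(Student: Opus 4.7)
The plan is to identify the $16$-dimensional space $\{\psi\in C^{\infty}(A_{3},\nu): D\psi=-\psi\}$ as the direct sum of (i) a $14$-dimensional family arising from null-torsion $I_{1}^{'}$-holomorphic deformations of $p_{1}(A_{3})$ transferred via the Hopf lift, and (ii) a $2$-dimensional family coming from the infinitesimal $j,k$-actions.

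First, by the Xu correspondence of Proposition \ref{one-to-one curve} combined with Lemma \ref{one-to-one A3 A2}, deformations of $p_{1}(A_{3})$ as a null-torsion $I_{1}^{'}$-holomorphic curve are in bijection with deformations of $\widehat{p_{1}(A_{3})}=p_{1}(A_{2})$ as a horizontal holomorphic curve. By Proposition \ref{nontrivial deform A2} and Lemma \ref{deform pi A2}, the latter constitute a $14$-real-dimensional family parametrized by ${\rm PSp}(2,\mathbb{C})/{\rm PSL}(2,\mathbb{C})$. Taking Hopf lifts then yields a corresponding $14$-dimensional family of associative deformations of $A_{3}$.

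Next, the action of $j,k\in{\rm Sp}(1)$ produces infinitesimal deformations transverse to this Hopf-lift family. Since $j$ and $k$ anti-commute with $i$, they do not preserve the fibers of the Hopf fibration $p_{1}: S^{7}\to\mathbb{C}P^{3}$; consequently $j\cdot A_{3}$ and $k\cdot A_{3}$ are not Hopf lifts of curves in $\mathbb{C}P^{3}$, so the associated infinitesimal directions lie outside the Hopf-lift subspace and contribute $2$ further independent deformations.

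Finally, since $14+2=16$ matches the dimension of the kernel computed in the preceding paragraph, these two families exhaust all infinitesimal associative deformations of $A_{3}$. Both are realized by explicit geometric constructions---the ${\rm PSp}(2,\mathbb{C})$-deformation of the associated horizontal holomorphic curve combined with the Xu correspondence and the Hopf lift, and the $j,k$-action on $S^{7}$---so every element of the kernel is integrable. The main difficulty will be the transversality claim in the second step: I will verify it by tracking the normal components of the Killing fields generated by $j$ and $k$ in the explicit $(u,C)$-parametrization of the kernel provided by Lemma \ref{gen_D}, and checking that they project non-trivially onto the specific modes $\langle\rho_{6}(\cdot)v^{(6)}_{4},u\rangle$ and the constant $C$ that characterize the kernel.
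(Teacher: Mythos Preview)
Your approach matches the paper's: both arguments reduce to the dimension count $14+2=16$, using the Xu correspondence (Proposition~\ref{one-to-one curve}) and Lemma~\ref{one-to-one A3 A2} to identify the null-torsion deformations of $p_{1}(A_{3})$ with the $14$-dimensional family of horizontal holomorphic deformations of $p_{1}(A_{2})$ from Lemma~\ref{deform pi A2}, and then adding the two $j,k$-directions. The paper in fact gives no proof beyond the sentence preceding the proposition, so your write-up is already more detailed than what appears there.

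One point to correct: your heuristic justification for transversality is wrong as stated. The elements $j,k\in{\rm Sp}(1)$ \emph{do} descend to $\mathbb{C}P^{3}$, since $j\cdot(e^{i\theta}z)=e^{-i\theta}(j\cdot z)$, so $j$ carries $p_{1}$-fibers to $p_{1}$-fibers and $j\cdot A_{3}$ \emph{is} again a Hopf lift. The reason the $j,k$-deformations are transverse to the $14$-dimensional family is rather that the induced map $j_{*}$ on $\mathbb{C}P^{3}$ is anti-holomorphic with respect to $I_{1}$ and hence does not lie in ${\rm PSp}(2,\mathbb{C})$; equivalently, the transversality is inherited directly from the $A_{2}$ case (Proposition~\ref{nontrivial deform A2}), where the same $14+2$ decomposition was established. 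Your fallback plan---checking the normal components of the $j,k$-Killing fields against the explicit $(u,C)$-parametrization of the kernel---would also settle this cleanly and is a perfectly good alternative.
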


\address{Graduate School of Mathematical Sciences, University of Tokyo
3-8-1 Komaba, Meguro, Tokyo 153-8914, Japan}
{kkawai@ms.u-tokyo.ac.jp}


\begin{thebibliography}{99}

\bibitem{Bar}
C. B\"{a}r, Real Killing spinors and holonomy, 
Comm. Math. Phys. 154 (1993), 509-521.

\bibitem{Besse}
A.L. Besse, Einstein manifolds, Springer-Verlag, New York, 1987.

\bibitem{Bolton}
J. Bolton and L. M. Woodward, Higher singularities and the twistor fibration 
$\pi : \mathbb{C}P^{3} \rightarrow S^{4}$, Geom. Dedicata 80 (2000), 231-245. 

\bibitem{FKMS}
T. Friedrich, I. Kath, A. Moroianu, and U. Semmelmann, On nearly parallel $G_{2}$-structures, 
J. Geom. Phys. 23 (1997), 259-286. 

\bibitem{Harvey Lawson}
R. Harvey and H. B. Lawson, Calibrated geometries, Acta Math. 148 (1982), 47-157.

\bibitem{K deform}
K. Kawai, Deformations of homogeneous associative submanifolds in nearly parallel $G_{2}$-manifolds, 
math.DG/1407.8046. 

\bibitem{Lotay3}
J. D. Lotay, Associative Submanifolds of the 7-Sphere, 
Proc. Lond. Math. Soc. (3) 105 (2012), 1183-1214. 

\bibitem{Marshall}
S. P. Marshall, Some Special Lagrangian Submanifolds of $\mathbb{C}^{m}$, dissertation, 
University of Oxford, 1999.

\bibitem{Mashimo}
K. Mashimo, Homogeneous Totally Real Submanifolds of $S^{6}$, 
Tsukuba J. Math. 9 (1985), 185-202.

\bibitem{Ohnita_def}
Y. Ohnita, On deformation of 3-dimensional certain minimal Legendrian submanifolds, 
Proceedings of The Thirteenth International Workshop on Diff. Geom. 13 (2009), 71-87.

\bibitem{Salamon}
S. M. Salamon, Riemannian Geometry and Holonomy Groups, 
Pitman Research Notes in Mathematics 201, Longman, Harlow, 1989.

\bibitem{Xu pseudo}
F. Xu, Pseudoholomorphic curves in nearly K\"{a}hler ${\bf CP^{3}}$, 
Differential Geom. Appl. 28, (2010), 107-120.

\bibitem{Yano}
K. Yano, The theory of Lie derivatives and its applications, North-Holland, Amsterdam, 1957.

\end{thebibliography}
\end{document}